\newcommand\mytoday{\number\year-\ifcase\month\or 01\or 02\or 03\or 04\or 05\or 06\or 07\or 08\or 09\or 10\or 11\or 12\fi-\ifcase\day\or 01\or 02\or 03\or 04\or 05\or 06\or 07\or 08\or 09\or 10\or 11\or 12\or 13\or 14\or 15\or 16\or 17\or 18\or 19\or 20\or 21\or 22\or 23\or 24\or 25\or 26\or 27\or 28\or 29\or 30\or 31\fi} 
\newcolumntype{d}[2]{D{.}{.}{#1.#2}} 
\newcommand*{\abstractnoindent}{} 
\let\abstractnoindent\abstract
\renewcommand*{\abstract}{\let\quotation\quote\let\endquotation\endquote
\abstractnoindent}
\renewcommand{\p@enumii}[1]{\theenumi(#1)}
\theoremstyle{break} 
\newtheorem{definition}{Definition}[section] 
\newtheorem{lemma}[definition]{Lemma}
\newtheorem{theorem}[definition]{Theorem}
\newtheorem{problem}[definition]{Problem}
\newtheorem{remark}[definition]{Remark}
\newtheorem{example}[definition]{Example}
\theoremstyle{nonumberbreak} 
\newtheorem{proof}{Proof}
\newcommand*{\IP}{\mathbb{P}}
\newcommand*{\IE}{\mathbb{E}}
\newcommand*{\IR}{\mathbb{R}}
\newcommand*{\F}{\mathcal{F}}
\newcommand*{\IN}{\mathbb{N}}
\newcommand{\norm}[1]{\left\lVert#1\right\rVert}
\tikzstyle arrowstyle=[scale=2]
\tikzstyle directed=[postaction={decorate,decoration={markings,
    mark=at position 1 with {\arrow[arrowstyle]{stealth}}}}]
\begin{document}

\renewcommand{\figurename}{Fig.}

\thispagestyle{plain}
	\begin{center}
		{\bfseries\Large The infinite extendibility problem for exchangeable real-valued random vectors}
		\par\bigskip
		
		{\Large Jan-Frederik Mai}\\
		XAIA Investment\\
Sonnenstr.\ 19, 80331 M\"unchen\\
		email: mai@tum.de\\
	\end{center}

We survey known solutions to the infinite extendibility problem for (necessarily exchangeable) probability laws on $\IR^d$, which is:
\par
\begingroup
\leftskip=1cm
\rightskip=1cm
\emph{Can a given random vector $\bm{X}=(X_1,\ldots,X_d)$ be represented in distribution as the first $d$ members of an infinite exchangeable sequence of random variables?}
\par
\endgroup
This is the case if and only if $\bm{X}$ has a stochastic representation that is ``conditionally iid'' according to the seminal de Finetti's Theorem. Of particular interest are cases in which the original motivation behind the model $\bm{X}$ is not one of conditional independence. After an introduction and some general theory, the survey covers the traditional cases when $\bm{X}$ takes values in $\{0,1\}^d$, has a spherical law, a law with $\ell_1$-norm symmetric survival function, or a law with $\ell_{\infty}$-norm symmetric density. The solutions in all these cases constitute analytical characterizations of mixtures of iid sequences drawn from popular, one-parametric probability laws on $\IR$, like the Bernoulli, the normal, the exponential, or the uniform distribution. The survey further covers the less traditional cases when $\bm{X}$ has a Marshall-Olkin distribution, a multivariate wide-sense geometric distribution, a multivariate extreme-value distribution, or is defined as a certain exogenous shock model including the special case when its components are samples from a Dirichlet prior. The solutions in these cases correspond to iid sequences drawn from random distribution functions defined in terms of popular families of non-decreasing stochastic processes, like a L\'evy subordinator, a random walk, a process that is strongly infinitely divisible with respect to time, or an additive process. The survey finishes with a list of potentially interesting open problems. In comparison to former literature on the topic, this survey purposely dispenses with generalizations to the related and larger concept of finite exchangeability or to more general state spaces than $\IR$. Instead, it aims to constitute an up-to-date comprehensive collection of known and compelling solutions of the real-valued extendibility problem, accessible for both applied and theoretical probabilists, presented in a lecture-like fashion. 

\newpage
\tableofcontents

\pagestyle{empty}
\begin{landscape}
\begin{table}[!htbp]
\caption{Summary of main results surveyed. Whereas most appearing notations are introduced in the main body of the article, here $\mathcal{L}[X]$ denotes the Laplace transform of a random variable $X \geq 0$, and $\langle \bm{x},\bm{y} \rangle:=\sum_{k=1}^{d}x_k\,y_k$.}
\begin{center}
\begingroup
\renewcommand{\arraystretch}{1.75}
\begin{tabular}{>{\raggedright}p{41mm}|l|l|l}
\hline
law of $\bm{X}$ & conditionally iid with $H_t=$ & analytically & see\\
\hline
arbitrary in $M_+^1(\IR^d)$ & arbitrary in $M_+^1(\mathfrak{H})$ & $\sup_{g}\Big\{ \frac{|\IE[g(\bm{X})]|}{\sup\limits_{\bm{Y}}|\IE[g(\bm{Y})]|}\Big\} \leq 1$, $g$ bounded, $Y_k$ iid  & Theorem \ref{thm_probgen}\\
arbitrary in $M_+^1(\{0,1\}^d)$ & $(1-M)\,1_{\{t \geq 0\}}+M\,1_{\{t \geq 1\}}$ & $\IP(\bm{X}=\bm{x})=\nabla^{d-||\bm{x}||_1}b_{||\bm{x}||_1}, \,b_k=\IE[M^k]$ & Theorem \ref{thm_definetti01}\\
spherical law & $\Phi\Big( \frac{t}{M}\Big)$ & $\IE[ \exp\{\mathrm{i}\,\langle \bm{x},\bm{X}\rangle\}]=\varphi(||\bm{x}||_2), \,\varphi = \mathcal{L}[M]$ & Theorem \ref{schoenberg_thm}\\
$\ell_{\infty}$-norm symmetric density & $\max\Big\{0,\min\Big\{\frac{t}{M}, 1\Big\}\Big\} $ & $f_{\bm{X}}(\bm{x})=g_d(x_{[d]}),\,g_d(x)=\IE[M^{-d}\,1_{\{M>x\}}]$ & Theorem \ref{thm_gnedin}\\
ranks of Dirichlet prior samples & $DP(c,\{t\}_{t \in [0,1]})$ & $\IP(\bm{X}\leq \bm{x})=x_{[1]}\,\prod_{k=2}^{d}\frac{c\,x_{[k]}+k-1}{c+k-1}$ & Lemma \ref{lemma_DPrs}\\
\hline
law of $\bm{X}$ & conditionally iid with $Z_t=$ & $\IP(\bm{X}>\bm{x})=$ & see\\
\hline
$\ell_{1}$-norm symmetric survival function& $M\,t $ & $\varphi(||\bm{x}||_2), \,\varphi = \mathcal{L}[M]$ & Theorem \ref{thm_AC}\\
Marshall--Olkin exponential law & L\'evy subordinator & $\exp\big\{\sum_{k=1}^{d}\Psi(k)\,\nabla x_{[d-k]}\big\},\,e^{-\Psi}=\mathcal{L}[Z_1]$ & Theorem \ref{thm_ciidlom}($\mathcal{E}$)\\
multivariate wide-sense geometric distribution & $\sum_{n=1}^{\lfloor t \rfloor}Y_n$, $Y_k$ iid & $\prod_{k=1}^{d}\IE[\exp(-k\,Y_1)]^{x_{[d-k+1]}-x_{[d-k]}}$ & Theorem \ref{thm_ciidlom}($\mathcal{G}$)\\
min-stable multivariate exponential law & $b\,t-c\,\log\big\{\prod\limits_{n \geq 1}G^{(n)}_{\sum\limits_{i=1}^{n}\eta_i/t-}\big\},\,G^{(n)}$ iid $\sim \gamma$ & $\exp\Big\{- b\,\sum\limits_{k=1}^{d}x_k-c\,\int_{\mathfrak{H}_{+,1}} \ell_G(\bm{x})\,\gamma(\mathrm{d}G)\Big\}$ & Theorem \ref{thm_minstable}\\
exogenous shock model & additive subordinator & $\frac{\exp\big\{-\sum_{k=1}^{d}\Psi_{x_{[d-k+1]}}(k)\big\}}{\exp\big\{-\Psi_{x_{[d-k+1]}}(k-1)\big\}},\,e^{-\Psi_t}=\mathcal{L}[Z_t]$ & Theorem \ref{thm_exShock}\\
Sato-frailty model & self-similar, additive subordinator & $\frac{\exp\big\{-\sum_{k=1}^{d}\Psi(k\,x_{[d-k+1]})\big\}}{\exp\big\{-\Psi((k-1)\,x_{[d-k+1]})\big\}},\,e^{-\Psi}=\mathcal{L}[Z_1]$ & Lemma \ref{lemma_BFchar} \\
\hline
\end{tabular}
\endgroup
\end{center}
\label{tab}
\end{table}
\end{landscape}
\pagestyle{plain}

\newpage 

\section{Introduction and general background}
\subsection{General notation}
Before we start, let us clarify some general notation used throughout, while some section-specific notations are introduced where they appear.
\par
\emph{Some very general mathematical definitions:}\\
We denote by $\IN=\{1,2,3,\ldots\}$ the set of natural numbers and $\IN_0:=\IN \cup \{0\}$, by $\IR$ the set of real numbers, by $\IR^d$ the set of $d$-dimensional row vectors with entries in $\IR$, for $d \in \IN$. For $n \in \IN_0$ we denote by $f^{(n)}$ the $n$-th derivative of a function $f:\IR \rightarrow \IR$, provided existence. For $d$ numbers $x_1,\ldots,x_d \in \IR$ we denote by $x_{[1]} \leq x_{[2]} \leq \ldots \leq x_{[d]}$ an ordered list. For $x \in \IR$ we denote by $\lceil x \rceil$ the smallest integer greater or equal to $x$ (ceiling function), and by $\lfloor x \rfloor$ the largest integer less than or equal to $x$ (floor function). We denote by $\mbox{det}[A]$ the determinant of a square matrix $A \in \IR^{d \times d}$. We denote elements $\bm{x}=(x_1,\ldots,x_d) \in \IR^d$ by bold letters in comparison to (one-dimensional) elements $x \in \IR$. Expressions like $\bm{x}>\bm{y}$ for $\bm{x},\bm{y} \in \IR^d$ are meant component-wise, i.e.\ $x_k>y_k$ for each $k=1,\ldots,d$. We furthermore use the notation $\norm{\bm{x}}_p:=(|x_1|^p+\ldots+|x_d|^p)^{1/p}$ for the $\ell_p$-norm of $\bm{x} \in \IR^d$, $p \geq 1$. 
\par
\emph{Some general definitions regarding probability spaces:}\\
All random objects to be introduced are formally defined on some probability space $(\Omega,\F,\IP)$ with $\sigma$-algebra $\F$ and probability measure $\IP$, and the expected value of a random variable $X$ is denoted by $\IE[X]$. As usual, the argument $\omega \in \Omega$ of some random variable $X:\Omega \rightarrow \IR$ will always be omitted. The symbol $\stackrel{d}{=}$ denotes equality in distribution and the symbol $\sim$ means ``is distributed according to''. We recall that for random vectors $(X_1,\ldots,X_d) \stackrel{d}{=} (Y_1,\ldots,Y_d)$ means $\IE[g(X_1,\ldots,X_d)]=\IE[g(Y_1,\ldots,Y_d)]$ for all bounded, continuous functions $g:\IR^d \rightarrow \IR$, where the expectation values $\IE$ are taken on the respective probability spaces of $(X_1,\ldots,X_d)$ and $(Y_1,\ldots,Y_d)$, which might be different. Equality in law for two stochastic processes $X=\{X_t\}$ and $Y=\{Y_t\}$ means that $(X_{t_1},\ldots,X_{t_d})\stackrel{d}{=}(Y_{t_1},\ldots,Y_{t_d})$ for arbitrary $d \in \IN$ and $t_1,t_2,\ldots,t_d$. Throughout, the abbreviation \emph{iid} stands for \underline{i}ndependent and \underline{i}dentically \underline{d}istributed. The index $t$ of a real-valued random variable $f_t$ that belongs to some stochastic process $f=\{f_t\}_{t \in T}$ is purposely written as a sub-index, in order to distinguish it from the value $f(t)$ of some (non-random) function $f:T \rightarrow \IR$. If $F$ is the distribution function of some random variable taking values in $\IR$, we denote by 
\begin{gather*}
F^{-1}(y):=\inf\{x \in \IR\,:\,F(x) \geq y\},\quad y \in [0,1],
\end{gather*}
its \emph{generalized inverse}, see \cite{embrechts13} for background. Any distribution function $C$ of a random vector $\bm{U}=(U_1,\ldots,U_d)$ whose components $U_k$ are uniformly distributed on $[0,1]$ is called a \emph{copula}, see \cite{mai12} for a textbook treatment. We further recall that an arbitrary survival function $\bar{F}$ of some $d$-variate random vector $\bm{X}$ can always be written\footnote{See \cite[p.\ 195--196]{mcneil05}.} as $\bar{F}(\bm{x})= \hat{C}\big(  \IP(X_1>x_1),\ldots,\IP(X_d>x_d)\big)$, where $\hat{C}$ is a copula, called a \emph{survival copula} for $\bar{F}$, and it is uniquely determined in case the random variables $X_1,\ldots,X_d$ have continuous distribution functions. This is the survival analogue of the so-called Theorem of Sklar, due to \cite{sklar59}. The Theorem of Sklar itself states that the distribution function $F$ of $\bm{X}$ can be written as $F(\bm{x})= C\big(  \IP(X_1 \leq x_1),\ldots,\IP(X_d \leq x_d)\big)$ for a copula $C$, called a \emph{copula} for $F$. The relationship between a copula $C$ and its survival copula $\hat{C}$ is that if $(U_1,\ldots,U_d) \sim C$ then $(1-U_1,\ldots,1-U_d) \sim \hat{C}$. 
\par
\emph{A notation of specific interest in the present survey:}\\
We denote by $\mathfrak{H}$ the set of all distribution functions of real-valued random variables, and by $\mathfrak{H}_+$ the subset containing all elements $F$ such that $x<0$ implies $F(x)=0$, i.e.\ distribution functions of non-negative random variables. Elements $F \in \mathfrak{H}$ are right-continuous, and we denote by $F(x-):=\lim_{t \uparrow x}F(x)$ their left-continuous versions. If $\mathfrak{X}$ is some Hausdorff space, we denote by $M_+^1(\mathfrak{X})$ the set of all probability measures on the measurable space $(\mathfrak{X},\mathcal{B}(\mathfrak{X}))$, where $\mathcal{B}(\mathfrak{X})$ denotes the Borel-$\sigma$-algebra of $\mathfrak{X}$. This notation is borrowed from \cite{berg84}. Now recall that $\mathfrak{H}$ is metrizable (hence in particular Hausdorff) when topologized with the so-called \emph{L\'evy metric} that induces weak convergence of the associated probability distributions on $\IR$, see \cite{sibley71}. Consequently, we denote by $M_+^{1}(\mathfrak{H})$ the set of all probability measures on $\mathfrak{H}$. A random element $H=\{H_t\}_{t \in \IR} \sim \gamma \in M_+^1(\mathfrak{H})$ is almost surely a c\`adl\`ag stochastic process, and a common way of treating probability laws of such objects works via the so-called \emph{Skorohod metric} on the space of c\`adl\`ag paths. However, even though the Skorohod topology and the topology induced by the L\'evy metric are not identical, see \cite[p.\ 327-328]{pestman09}, their induced Borel-$\sigma$-algebras on the set $\mathfrak{H}$ can indeed be shown to coincide, so that our viewpoint is equivalent. 
 
\subsection{Motivation and mathematical preliminaries}
Throughout, we consider by $\bm{X}=(X_1,\ldots,X_d)$ a random vector taking values in $\IR^d$. Since we are only interested in the probability distribution of $\bm{X}$, we identify $\bm{X}$ with its probability law in the sense that we often say $\bm{X}$ has some property if and only if its probability distribution has this property. The central theme of the present survey deals with the following formal definition, using the nomenclature in \cite{dykstra73}.
\begin{definition}[Conditionally iid]\label{def_condiid}
We say that a probability measure $\mu \in M_+^1(\IR^d)$ is \emph{conditionally iid} if there exists a probability measure $\gamma \in M_+^1(\mathfrak{H})$ such that the equality
\begin{gather*}
\mu\big((-\infty,x_1]\times \ldots \times (-\infty,x_d]\big) = \int_{\mathfrak{H}} h(x_1)\,\cdots\,h(x_d)\,\gamma(\mathrm{d}h)
\end{gather*}
holds for all $x_1,\ldots,x_d \in \IR$.\\
We say that a random vector $\bm{X}=(X_1,\ldots,X_d)$ taking values in $\IR^d$ is \emph{conditionally iid} if its probability distribution is conditionally iid.\\
Given a family of probability distributions $\mathfrak{M} \subset M_+^1(\IR^d)$, we introduce the notation
\begin{gather*}
\mathfrak{M}_{\ast} = \{\mu \in \mathfrak{M}\,:\,\mu \mbox{ is conditionally iid}\}.
\end{gather*}
\end{definition}

Consider a random vector $\bm{X}=(X_1,\ldots,X_d)$ on a probability space $(\Omega,\F,\IP)$ that is defined via 
\begin{gather*}
X_k:=f(U_k,H), \quad k = 1,\ldots,d, 
\end{gather*}
with some measurable ``functional'' $f$, an iid sequence of random objects $U_1,\ldots,U_d$, and some independent random object $H$ that is measurable with respect to the sub-$\sigma$-algebra $\mathcal{H}=\sigma(H) \subset \F$ that it generates itself. Such $\bm{X}$ is always conditionally iid and the probability distribution of the stochastic process $H_t:=\IP(X_1 \leq t\,|\,\mathcal{H})$, $t \in \IR$, plays the role of the probability measure $\gamma$ in Definition \ref{def_condiid}. The object $H$, sometimes called a \emph{latent (dependence-inducing) factor}, then induces dependence between the components, which are iid conditioned on $\mathcal{H}$. This is a Bayesian viewpoint, based on a two-step construction: first simulate an instance of $H$, then simulate $X_1,\ldots,X_d$ iid given $H$. However, it is important to be aware that a random vector that is conditionally iid according to our definition does not necessarily have to be defined by a stochastic model that relies on such a two-step construction. Our definition only requires that such a construction exists, possibly on another probability space. In fact, typical cases of interest are such that $\mu$ is defined in terms of a stochastic model or probabilistic property which is a priori unrelated to the concept of conditional independence, as we will see. 
\par
Throughout, we are interested in a solution to the following problem:

\begin{problem}[Motivating problem] \label{motivatingproblem}
Given a collection $\mathfrak{M} \subset M_+^1(\IR^d)$ and $\mu \in \mathfrak{M}$, provide necessary and sufficient conditions ensuring that $\mu \in \mathfrak{M}_{\ast}$.
\end{problem}

\begin{remark}[Nomenclature]\label{rmk_nomenclature}
In the literature, elements of $\mathfrak{M}_{\ast}$ are not always called conditionally iid, but other names have been given. For instance, \cite{shaked77} calls them \emph{positive dependent by mixture (PDM)}, \cite{spizzichino82,gnedin95,liggett07} call them \emph{infinitely extendible}, and \cite[Definition 1.10, p.\ 43]{mai12} call them simply \emph{extendible}. The nomenclature ``(infinite) extendibility'' refers to the fact that conditionally iid random vectors can always be thought of as finite margins of infinite conditionally iid sequences, as will be explained below. The nomenclature ``PDM'' becomes intuitive from Lemmata \ref{lemma_nonegass}, \ref{lemma_kendall} and \ref{lemma_Shakedpos} below, but is rather unusual.
\end{remark}

The investigation of conditionally iid random vectors is closely related to the concept of exchangeability. Recall that the probability distribution of a random vector $\bm{X}$ is called \emph{exchangeable} if it is invariant under an arbitrary permutation of the components of $\bm{X}$. The following observation is immediate but important.
\begin{lemma}[Exchangeability] \label{lemma_exchangeable}
If $\bm{X}$ is conditionally iid, it is also exchangeable. 
\end{lemma}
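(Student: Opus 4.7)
The plan is to show that the joint distribution function of $\bm{X}$ is invariant under every permutation of its arguments, and then to invoke the fact that the law of a random vector on $\IR^d$ is determined by its distribution function. First I fix an arbitrary permutation $\sigma$ of $\{1,\ldots,d\}$ and apply the defining mixture representation of Definition \ref{def_condiid} to the reordered vector, which gives
\[
\IP\big(X_{\sigma(1)} \leq x_1, \ldots, X_{\sigma(d)} \leq x_d\big) \;=\; \int_{\mathfrak{H}} h(x_{\sigma^{-1}(1)})\,\cdots\,h(x_{\sigma^{-1}(d)})\,\gamma(\mathrm{d}h).
\]

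The key observation is that the integrand is a finite product of real numbers, and multiplication in $\IR$ is commutative, so $h(x_{\sigma^{-1}(1)})\,\cdots\,h(x_{\sigma^{-1}(d)}) = h(x_1)\,\cdots\,h(x_d)$ pointwise in $h \in \mathfrak{H}$. Consequently, the integral on the right equals $\IP(X_1 \leq x_1, \ldots, X_d \leq x_d)$, so $(X_{\sigma(1)},\ldots,X_{\sigma(d)})$ and $(X_1,\ldots,X_d)$ agree on every orthant $(-\infty,x_1]\times \cdots \times (-\infty,x_d]$.

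To upgrade this agreement on orthants to equality of the full probability laws, I would invoke the standard uniqueness argument: the collection of such orthants is a $\pi$-system generating the Borel $\sigma$-algebra $\mathcal{B}(\IR^d)$, so two probability measures coinciding on it must be equal. Since $\sigma$ was arbitrary, this proves that the law of $\bm{X}$ is invariant under all permutations, which is exchangeability in the sense of the definition recalled just before the lemma.

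There is no substantive obstacle here; the result is essentially immediate from commutativity of multiplication inside the mixture integral. The only point requiring a modicum of care is the move from equality on orthants to equality of the full Borel laws, and this is a classical consequence of Dynkin's $\pi$--$\lambda$ theorem.
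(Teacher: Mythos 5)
Your proof is correct and follows essentially the same route as the paper: apply the mixture representation of Definition \ref{def_condiid} to the permuted arguments, use commutativity of the product $h(x_1)\cdots h(x_d)$ inside the integral, and conclude that the distribution functions (hence the laws) coincide. The only difference is that you make explicit the $\pi$-system/Dynkin step from equality of distribution functions to equality of laws, which the paper leaves implicit in the phrase ``distribution function (hence law)''.
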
 
\begin{proof}
Let $\bm{X} \sim \mu$ with $\gamma$ as in Definition \ref{def_condiid}. If $\pi$ is an arbitrary permutation of $\{1,\ldots,d\}$ we observe that
\begin{align*}
& \IP(X_{1} \leq x_1,\ldots,X_{1} \leq x_d) = \int_{\mathfrak{H}} h(x_1)\,\cdots\,h(x_d)\,\gamma(\mathrm{d}h)\\
& \qquad= \int_{\mathfrak{H}} h(x_{\pi(1)})\,\cdots\,h(x_{\pi(d)})\,\gamma(\mathrm{d}h) = \IP(X_{\pi^{-1}(1)} \leq x_{1},\ldots,X_{\pi^{-1}(1)} \leq x_{d}).
\end{align*}
Since $\pi$ was arbitrary, this shows that the distribution function (hence law) of $\bm{X}$ is invariant with respect to permutations of its components.
\end{proof}

Exchangeability is a property which is convenient to investigate by means of Analysis, whereas the notion ``conditionally iid'', in which we are interested, is a priori purely probabilistic and more difficult to investigate. Unfortunately, exchangeability is only a necessary but no sufficient condition for the solution of our problem. For instance, a bivariate normal distribution is obviously exchangeable if and only if the two means and variances are identical, also for negative correlation coefficients. However, Example \ref{ex_Normal} and Lemma \ref{lemma_nonegass} below show that conditionally iid random vectors necessarily have non-negative correlation coefficients. One can show in general that the correlation coefficient -- if existent -- between two components of an exchangeable random vector on $\IR^d$ is bounded from below by $-1/(d-1)$, see, e.g., \cite[p.\ 7]{aldous85}. As the dimension $d$ tends to infinity, this lower bound becomes zero. Even better, the difference between exchangeabilty and a conditionally iid structure vanishes completely as the dimension $d$ tends to infinity, which is the content of de Finetti's Theorem.
\begin{theorem}[de Finetti's Theorem]\label{thm_definetti}
Let $\{X_k\}_{k \in \IN}$ be an infinite sequence of random variables on some probability space $(\Omega,\F,\IP)$. The sequence $\{X_k\}_{k \in \IN}$ is \emph{exchangeable}, meaning that each finite subvector is exchangeable, if and only if it is iid conditioned on some $\sigma$-field $\mathcal{H} \subset \F$. In this case, $\mathcal{H}$ equals almost surely the tail-$\sigma$-field of $\{X_k\}_{k \in \IN}$, which is given by $\cap_{n \geq 1}\sigma(X_n,X_{n+1},\ldots)$. 
\end{theorem}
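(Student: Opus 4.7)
The forward direction is immediate: if the $X_k$ are iid with (random) distribution function $H$ conditional on some $\sigma$-field $\mathcal{H}$, then for every finite set $\{k_1,\ldots,k_d\}\subset\IN$ and every permutation $\pi$ of $\{1,\ldots,d\}$ both $(X_{k_1},\ldots,X_{k_d})$ and $(X_{k_{\pi(1)}},\ldots,X_{k_{\pi(d)}})$ share the same product conditional law; averaging over $\mathcal{H}$ gives equality of the unconditional laws, exactly as in Lemma \ref{lemma_exchangeable}. The substance of the theorem therefore lies in the converse.

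For the converse I would construct the random distribution function as an almost-sure limit of empirical distribution functions. Set
\[
H_n(t):=\frac{1}{n}\sum_{k=1}^n 1_{\{X_k\leq t\}},\quad t\in\IR,\ n\in\IN,
\]
and introduce the decreasing filtration $\mathcal{E}_n:=\sigma\{\varphi(X_1,\ldots,X_n):\varphi\mbox{ bounded Borel and permutation-invariant}\}\vee\sigma(X_{n+1},X_{n+2},\ldots)$. Exchangeability of $(X_1,\ldots,X_n)$ conditional on $\mathcal{E}_n$ yields $\IE[1_{\{X_1\leq t\}}\mid \mathcal{E}_n]=H_n(t)$, so $\{H_n(t)\}_{n\in\IN}$ is a reverse martingale and Doob's reverse martingale convergence theorem produces $H_n(t)\to H(t)$ a.s.\ and in $L^1$ for each fixed $t$, with $H(t)$ measurable with respect to the exchangeable $\sigma$-field $\mathcal{E}:=\bigcap_n\mathcal{E}_n$. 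Pasting the limit along a countable dense set of $t$ and using monotonicity, the process $H=\{H(t)\}_{t\in\IR}$ can be realized as an $\mathcal{E}$-measurable, almost surely right-continuous random c.d.f.

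The key step is the conditional factorization
\[
\IP\big(X_1\leq x_1,\ldots,X_d\leq x_d\mid \mathcal{E}\big)=\prod_{k=1}^d H(x_k)\qquad\mbox{a.s.},
\]
which by Definition \ref{def_condiid} is precisely the sought conditional-iid representation with $\mathcal{H}:=\mathcal{E}$. I would first verify the continuous analogue $\IE[\prod_k g_k(X_k)\mid\mathcal{E}]=\prod_k\int g_k\,\mathrm{d}H$ for bounded continuous $g_1,\ldots,g_d$: exchangeability and reverse-martingale convergence identify the left-hand side with $\lim_n\binom{n}{d}^{-1}\sum_{*}\prod_k g_k(X_{i_k})$, where $\sum_*$ runs over ordered $d$-tuples of distinct indices in $\{1,\ldots,n\}$; since the $O(n^{d-1})$ tuples with a repeated coordinate contribute negligibly, this coincides with $\lim_n\prod_k\big(n^{-1}\sum_{i=1}^n g_k(X_i)\big)$, and each univariate factor converges a.s.\ to $\int g_k\,\mathrm{d}H$ by the same reverse-martingale argument applied to $g_k$ in place of an indicator. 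A monotone class / Stone--Weierstrass extension then upgrades the factorization to product indicators $g_k=1_{(-\infty,x_k]}$, right-continuity of $H$ handling the boundary.

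Finally, the identification $\mathcal{H}=\mathcal{E}\stackrel{\mbox{\scriptsize a.s.}}{=}\bigcap_{n\geq 1}\sigma(X_n,X_{n+1},\ldots)$ follows from two inclusions: the tail $\sigma$-field is trivially contained in every $\mathcal{E}_n$ and hence in $\mathcal{E}$, while conditionally on $\mathcal{E}$ the $X_k$ are iid and the conditional Kolmogorov zero-one law forces every $\mathcal{E}$-event to coincide a.s.\ with a tail event. The main technical obstacle I anticipate is the rigorous justification of the combinatorial-to-product rearrangement in the third paragraph together with the interchange of a.s.\ limits and conditional expectation; once these are in place, the remaining steps are routine measure-theoretic bookkeeping.
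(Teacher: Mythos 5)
Your argument is correct in outline, but it follows a genuinely different variant of the reversed-martingale proof than the paper does. The paper (following Aldous) never introduces the exchangeable $\sigma$-fields or the empirical distribution function: it works directly with the future $\sigma$-fields $\F_{-n}=\sigma(X_n,X_{n+1},\ldots)$, shows via reversed martingale convergence that $\IE[g(X_0)\,|\,\F_{-n}]$ is almost surely constant in $n$ and equal to $\IE[g(X_0)\,|\,\mathcal{H}]$ for the tail field $\mathcal{H}$, and then obtains conditional identical distribution and conditional independence purely by tower-property manipulations (an induction over products $\prod_j g_j(X_{i_j})$). You instead condition on the decreasing exchangeable $\sigma$-fields $\mathcal{E}_n$, construct the directing random distribution function $H$ explicitly as the a.s.\ limit of empirical distribution functions, and prove the factorization by comparing the $U$-statistic (sampling without replacement) with the product of empirical means, the discrepancy being of order $n^{-1}$. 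What each route buys: yours produces the directing measure $H$ and the identification $\mathcal{E}\stackrel{\mathrm{a.s.}}{=}\mathcal{T}\stackrel{\mathrm{a.s.}}{=}\sigma(H)$ as byproducts (the paper only records the empirical-d.f.\ limit later, in Lemma \ref{lemma_condGC}), and it connects directly to Definition \ref{def_condiid}; the paper's route is leaner, avoiding the combinatorial estimate and the monotone-class bookkeeping, at the price of leaving $H$ implicit.

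Two small repairs. First, the normalizer for the sum over \emph{ordered} $d$-tuples of distinct indices in $\{1,\ldots,n\}$ is $\tfrac{(n-d)!}{n!}=\bigl(d!\binom{n}{d}\bigr)^{-1}$, not $\binom{n}{d}^{-1}$; harmless, but it should be stated correctly since the whole point of that step is an exact identity for $\IE[\prod_k g_k(X_k)\,|\,\mathcal{E}_n]$. Second, the final identification is argued in the wrong direction: the conditional Kolmogorov zero--one law says the tail is trivial \emph{given} $\mathcal{E}$, which only re-proves the easy inclusion $\mathcal{T}\subset\mathcal{E}$ (mod null) that you already have from $\sigma(X_{n+1},\ldots)\subset\mathcal{E}_n$. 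For the substantive inclusion $\mathcal{E}\subset\mathcal{T}$ (mod null), use your factorization plus a monotone-class argument to conclude that $\IP(B\,|\,\mathcal{E})$ admits a $\sigma(H)$-measurable version for every $B\in\sigma(X_1,X_2,\ldots)$; applying this with $B=A\in\mathcal{E}$ gives $1_A=\IP(A\,|\,\mathcal{E})$ a.s.\ equal to a $\sigma(H)$-measurable variable, and $\sigma(H)\subset\mathcal{T}$ since $H_t=\lim_n \frac1n\sum_{k=m}^{n}1_{\{X_k\leq t\}}$ for every $m$. With these two points fixed, the proposal is a complete and valid alternative to the paper's proof.
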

\begin{proof}
Originally due to \cite{definetti37}. We refer to \cite{aldous85} for a proof based on the reversed martingale convergence theorem, which is briefly sketched. Of course, we only need to verify that exchangeability implies conditionally iid, as the converse follows from Lemma \ref{lemma_exchangeable}. For the sake of a more convenient notation we assume the infinite sequence $\{X_k\}_{k \in \IN_0}$ is indexed by $\IN_0 = \IN \cup \{0\}$, and we define $\sigma$-algebras $\F_{-n}:=\sigma(X_n,X_{n+1},\ldots)$ for $n \in \IN_0$. The tail-$\sigma$-filed of the sequence is $\mathcal{H}:=\cap_{n \leq 0}\F_n$. In order to establish the claim, three auxiliary observations are helpful with an arbitrary bounded, measurable function $g$ fixed:
\begin{itemize}
\item[(i)] Exchangeability implies $(X_0,X_1,\ldots) \stackrel{d}{=}(X_0,X_{n+1},\ldots)$ for arbitrary $n \in \IN$.  This implies $\IE[g(X_0) \,|\,\F_{-1}]\stackrel{d}{=}\IE[g(X_0)\,|\,\F_{-(n+1)}]$, $n \in \IN$.
\item[(ii)] The sequence $Y_n:=\IE[g(X_0)\,|\,\F_n]$, $n \leq 0$, is easily checked to be a reversed martingale. The reversed martingale convergence theorem implies that $Y_n$ converges almost surely and in $L^1$ to $\IE[g(X_0)\,|\,\mathcal{H}]$. See \cite[p.\ 264 ff]{durrett10} for background on reversed martingales (convergence).
\item[(iii)] Letting $n \rightarrow \infty$ in (i), we observe from (ii) that $Y_{-1} \stackrel{d}{=} \IE[g(X_0)\,|\,\mathcal{H}]$. We can further replace this equality in law by an almost sure equality, since $\mathcal{H} \subset \F_{-1}$ and the second moments of $Y_{-1}$ and $\IE[g(X_0)\,|\,\mathcal{H}]$ coincide. Thus, the sequence $\{Y_{-n}\}_{n \in \IN}$ is almost surely a constant sequence.
\end{itemize}
With these auxiliary observations we may now finish the argument. On the one hand, exchangeability implies $(X_0,X_{n+1},\ldots) \stackrel{d}{=}(X_n,X_{n+1},\ldots)$, which gives the almost sure equality $\IE[g(X_0)\,|\,\F_{-(n+1)}]=\IE[g(X_n)\,|\,\F_{-(n+1)}]$. Taking $\IE[.\,|\,\mathcal{H}]$ on both sides of this equation implies with the tower property of conditional expectation that $\IE[g(X_0)\,|\,\mathcal{H}]=\IE[g(X_n)\,|\,\mathcal{H}]$. Since $g$ was arbitrary, $X_0$ and $X_n$ are identically distributed conditioned on $\mathcal{H}$, and since $n$ was arbitrary all members of the sequence are identically distributed conditioned on $\mathcal{H}$. To verify conditional independence, let $g_1,g_2$ be two bounded, measurable functions. For $n \geq 1$ arbitrary, using (iii) in the third equality below, we compute
\begin{align*}
\IE[g_1(X_0)\,g_2(X_n)\,|\,\mathcal{H}] & = \IE[ \IE[g_1(X_0)\,g_2(X_n)\,|\,\F_{-n}]\,|\,\mathcal{H}]\\
&  = \IE[ g_2(X_n)\,\IE[g_1(X_0)\,|\,\F_{-n}]\,|\,\mathcal{H}] \\
& = \IE[ g_2(X_n)\,\IE[g_1(X_0)\,|\,\mathcal{H}]\,|\,\mathcal{H}]\\
&= \IE[ g_2(X_n)\,|\,\mathcal{H}]\,\IE[g_1(X_0)\,|\,\mathcal{H}].
\end{align*} 
The precisely same tower property argument inductively also implies
\begin{align*}
\IE\Big[\prod_{j=1}^{k}g_{i}(X_{i_j})\,\Big|\,\mathcal{H}\Big]= \prod_{j=1}^{k}\IE[ g_i(X_{i_j})\,|\,\mathcal{H}]
\end{align*} 
for arbitrary $0 \leq i_1<\ldots<i_k$ and bounded measurable functions $g_1,\ldots,g_k$. Thus, the random variables $X_0,X_1,\ldots$ are independent conditioned on $\mathcal{H}$. 
\end{proof}

\emph{Which topics are covered in the present survey?}\\
The present article surveys known answers to Problem \ref{motivatingproblem} for families of multivariate probability distributions $\mathfrak{M}$ that are well known in the statistical literature, and/ or have proven useful as a mathematical model for specific applications. While several traditional results of the theory have been studied in the last century, some significant achievements have been accomplished only within the last decade, so the present author feels that this is a good time point to recap what has been achieved, hence to write this overview article. One goal of the present survey is to collect the numerous results under one common umbrella in a reader-friendly summary to make them accessible for a broader audience of applied and theoretical probabilists, and in order to inspire others to join this interesting strand of research in the future. Proofs, or at least proof sketches, are presented  for most results in order to (a) demonstrate how solutions to Problem \ref{motivatingproblem} often unravel surprising links between seemingly different fields of mathematics/probability theory, and (b) render this document a useful basis for the use as lecture notes in an advanced course on multivariate statistics or probability theory.
\par
\emph{Which topics are not covered in the present survey?}\\
The scope of former literature on the topic is often wider, in particular the references \cite{aldous85,kingman78,aldous10} are very popular surveys on the topic with wider scope. On the one hand, many references on the topic discuss conditionally iid models under the umbrella of exchangeability, which has been mentioned to be a weaker notion for finite random vectors. The characterization of the (finitely) exchangeable subfamily of $\mathfrak{M}$ is often easier than the characterization of the (in general) smaller set $\mathfrak{M}_{\ast}$, and is typically an important first step towards a solution to Problem \ref{motivatingproblem}. However, the second (typically harder) step from (finite) exchangeability to conditionally iid is usually the more important and more interesting step from both a theoretical and practical perspective. The algebraic structure of a general theory on (finite) exchangeability is naturally of a different, often more combinatorial character, whereas ``conditionally iid'' by virtue of de Finetti's Theorem naturally is the concept of an infinite limit (of exchangeability) so that techniques from Analysis enter the scene. Thus, we feel it is useful to provide an account with a more narrow scope on conditionally iid, even though for some of the presented examples we are well aware that an interesting (finite) exchangeable theory is also viable. On the other hand, many references consider the case when the components of $\bm{X}$ take values in more general spaces than $\IR$, for instance in $\IR^n$ (i.e.\ lattices instead of vectors) or even function spaces. In particular, de Finetti's Theorem \ref{thm_definetti} can be generalized in this regard, seminal references are \cite{hewitt55,ressel85}. Research in this direction is by nature more abstract and thus maybe less accessible for a broader audience, or for more practically oriented readers. One goal of the present survey is to provide an account that is not exclusively geared towards theorists but also to applicants of the theory, and in particular to point out relationships to classical statistical probability laws on $\IR^d$. We believe that a limitation of this survey's scope to the real-valued case is still rich enough to provide a solid basis for an interesting and accessible theory. In fact, we seek to demonstrate that Problem \ref{motivatingproblem} has been solved satisfactorily in quite a number of highly interesting cases, and the solutions contain interesting links to different probabilistic topics. Of course, it might be worthwhile to ponder about generalizations of some of the presented results to more abstract settings in the future (unless already done) - but purposely these lie outside the present survey.
\par
\emph{Why is Problem \ref{motivatingproblem} interesting at all?}\\
Broadly speaking, because of two reasons: (a) conditionally iid models are convenient for applications, and (b) solutions to the extendibility problem sometimes rely on compelling relationships between a priori different theories.
\begin{itemize}
\item[(a)] Roughly speaking, conditionally iid models allow to model (strong and weak) dependence between random variables in a way that features many desirable properties which are taylor-made for applications, in particular when the dimension $d$ is large. Firstly, a conditionally iid random vector is ``dimension-free'' in the sense that components can be added or removed from $\bm{X}$ without altering the basic structure of the model, which simply follows from the fact that an iid sequence remains an iid sequence after addition or removal of certain members. This may be very important in applications that require a regular change of dimension, e.g.\ the readjustment of a large credit portfolio in a bank, when old credits leave and new credits enter the portfolio frequently. Secondly, if $\bm{X}$ has a distribution from a parametric family, the parameters of this family are typically determined by the parameters of the underlying latent probability measure $\gamma$, irrespective of the dimension $d$. Consequently, the number of parameters does usually not grow significantly with the dimension $d$ and may be controlled at one's personal taste. This is an enormous advantage for model design in practice, in particular since the huge degree of freedom/ huge number of parameters in a high-dimensional dependence model is often more boon than bane. Thirdly, fundamental statistical theorems relying on the iid assumption, like the law of large numbers, may still be applied in a conditionally iid setting, making such models very tractable. Last but not least, in dependence modeling a ``factor-model way of thinking'' is very intuitive, e.g.\ it is well-established in the multivariate normal case (thinking of principle component analyses etc.). On a high level, if one wishes to design a multi-factor dependence model within a certain family of distributions $\mathfrak{M}$, an important first step is to determine the one-factor subfamily $\mathfrak{M}_{\ast}$. Having found a conditionally iid stochastic representation of $\mathfrak{M}_{\ast}$, the design of multi-factor models is sometimes obvious from there, see also paragraph \ref{open_multifactor}.
\item[(b)] The solution to Problem \ref{motivatingproblem} is often mathematically challenging and compelling. It naturally provides an interesting connection between the ``static'' world of random vectors and the ``dynamic'' world of (one-dimensional) stochastic processes. The latter enter the scene because the latent factor being responsible for the dependence in a conditionally iid model for $\bm{X}$ may canonically be viewed as a non-decreasing stochastic process (a random distribution function), which is further explained in Section \ref{sec:canonic} below. In particular, for some classical families $\mathfrak{M}$ of multivariate laws from the statistical literature the family $\mathfrak{M}_{\ast}$ in Problem \ref{motivatingproblem} is conveniently described in terms of a well-studied family of stochastic processes like L\'evy subordinators, Sato subordinators, or processes which are infinitely divisible with respect to time. Moreover, in order to formally establish the aforementioned link between these two seemingly different fields of research the required mathematical techniques involve classical theories from Analysis like Laplace transforms, Bernstein functions, and moment problems. 
\end{itemize}

\emph{Before we start, can we please study a first simple example?}\\
It is educational to end this motivating paragraph by demonstrating the motivating problem with a simple example that all readers are familiar with. Denoting by $\mathcal{N}(\bm{\mu},\Sigma)$ the multivariate normal law with mean vector $\bm{\mu}=(\mu_1,\ldots,\mu_d) \in  \IR^d$ and covariance matrix $\Sigma \in \IR^{d \times d}$, Example \ref{ex_Normal} provides the solution for Problem \ref{motivatingproblem} in the case when $\mathfrak{M}$ consists of all multivariate normal laws.
\begin{example}[The multivariate normal law]\label{ex_Normal}
We want to solve Problem \ref{motivatingproblem} for the family 
\begin{gather*}
\mathfrak{M}=\{\mathcal{N}(\bm{\mu},\Sigma)\,:\,\bm{\mu} \in \IR^d,\,\Sigma \in \IR^{d \times d} \mbox{ symmetric, positive definite}\}.
\end{gather*}
We claim that $\mathfrak{M}_{\ast}$ equals the set of all multivariate normal distributions satisfying
\begin{gather*}
\bm{\mu}=(\mu,\ldots,\mu), \quad\Sigma= \begin{bmatrix} 
\sigma^2 & \rho\,\sigma^2 &  \ldots & \rho\,\sigma^2\\
\rho\,\sigma^2 & \sigma^2  &   & \rho\,\sigma^2\\
\vdots &  & \ddots  & \\
\rho\,\sigma^2 & \rho\,\sigma^2   &  & \sigma^2\\
\end{bmatrix},\quad \mu \in \IR,\,\sigma>0,\,\rho \in [0,1].
\end{gather*}
\begin{proof}
Consider $\bm{X}=(X_1,\ldots,X_d) \sim \mathcal{N}(\bm{\mu},\Sigma)$ on some probability space $(\Omega,\F,\IP)$ for $\bm{\mu}=(\mu_1,\ldots,\mu_d) \in \IR^d$, and $\Sigma = (\Sigma_{i,j}) \in \IR^{d \times d}$ a positive definite matrix. If we assume that the law of $\bm{X}$ is in $\mathfrak{M}_{\ast}$, it follows that there is a sub-$\sigma$-algebra $\mathcal{H} \subset \F$ such that the components $X_1,\ldots,X_d$ are iid conditioned on $\mathcal{H}$. Consequently,
\begin{gather}
\mu_k=\IE[X_k]=\IE[\IE[X_k\,|\,\mathcal{H}]]=\IE[\IE[X_1\,|\,\mathcal{H}]]=\IE[X_1]=\mu_1,
\label{normcalc1}
\end{gather}
irrespectively of $k=1,\ldots,d$. The analogous reasoning also holds for the second moment of $X_k$, which implies $\Sigma_{k,k}=\Sigma_{1,1}$ for all $k$. Moreover,
\begin{gather}
\IE[X_i\,X_j]= \IE[\IE[X_i\,|\,\mathcal{H}]\,\IE[X_j\,|\,\mathcal{H}]]=\IE[\IE[X_1\,|\,\mathcal{H}]^2]\geq \mu_1^2, 
\label{normcalc2}
\end{gather}
for arbitrary components $i \neq j$, where we used the conditional iid structure and Jensen's inequality. This finally implies that all off-diagonal elements of $\Sigma$ are identical and non-negative.
\par
Conversely, let $\mu \in \IR$, $\sigma>0$, and $\rho \in [0,1]$. Consider a probability space on which $d+1$ iid standard normally distributed random variables $M,M_1,\ldots,M_d$ are defined. We define
\begin{gather}
X_k:=\mu+\sigma\,\big( \sqrt{\rho}\,M+\sqrt{1-\rho}\,M_k\,\big),\quad k=1,\ldots,d.
\label{constr_exnormal}
\end{gather}
It is readily observed that $\bm{X}=(X_1,\ldots,X_d)$ has a multivariate normal law with pairwise correlation coefficients all being equal to $\rho$, and all components having mean $\mu$ and variance $\sigma^2$. Notice in particular that the non-negativity of $\rho$ is important in the construction (\ref{constr_exnormal}) because the square root is not well-defined otherwise. The components of $\bm{X}$ are obviously conditionally iid given the $\sigma$-algebra $\mathcal{H}$ generated by $M$. Hence the law of $\bm{X}$ is in $\mathfrak{M}_{\ast}$. 
\end{proof}
\end{example}

There are already some interesting remarks to be made about this simple example. First of all, it is observed that the family $\mathfrak{M}_{\ast}$ is always three-parametric, irrespective of the dimension $d$. This stands in glaring contrast to $\mathfrak{M}$, which has $d+d\,(d+1)/2$ parameters in dimension $d$. Second, in general a Monte Carlo simulation of a $d$-dimensional normal random vector $\bm{X}$ requires a Cholesky decomposition of the matrix $\Sigma$, which typically has computational complexity of order $d^3$, see \cite[Algorithm 4.3, p.\ 182]{mai12}. In contrast, the simulation of $\bm{X}$ with law in $\mathfrak{M}_{\ast}$ according to (\ref{constr_exnormal}) has only linear complexity in the dimension $d$. Especially in large dimensions this can be a critical improvement of computational speed. Third, the proof above shows that each random vector with law in $\mathfrak{M}_{\ast}$ may actually be viewed as the first $d$ components of an infinite sequence of conditionally iid random variables such that arbitrary finite $n$-margins have a multivariate normal law. Thus, we have actually solved the re-fined Problem \ref{motivatingproblemrefined} to be introduced in the upcoming paragraph.

\subsection{Canonical probability spaces} \label{sec:canonic}
We have mentioned earlier that a conditionally iid random vector $\bm{X}$ is usually constructed as $X_k:=f(U_k,H)$, $k = 1,\ldots,d$, from an iid sequence $U_1,\ldots,U_d$, some independent stochastic object $H$, and some functional $f$. Clearly, this general model is inconvenient because neither the law of $U_1$, nor the nature of the stochastic object $H$ or the functional $f$ are given explicitly. However, there is a canonical choice for all three entities, which we are going to consider in the sequel. By definition, conditionally iid means that conditioned on the object $H$ the random variables $X_1,\ldots,X_d$ are iid, distributed according to a univariate distribution function $F$, which may depend on $H$. A univariate distribution function $F$ is nothing but a non-decreasing, right-continuous function $F:\IR \rightarrow [0,1]$ with $\lim_{t \rightarrow -\infty}F(t)=0$ and $\lim_{t \rightarrow \infty}F(t)=1$, see \cite[Theorem 12.4, p.\ 176]{billingsley95}. Without loss of generality we may assume that the random object $H=\{H_t\}_{t \in \IR}$ already is the conditional distribution function itself, i.e.\ is a random variable in the space of distribution functions -- or, in other words, a non-decreasing, right-continuous stochastic process with $\lim_{t \rightarrow -\infty}H_t=0$ and $\lim_{t \rightarrow \infty}H_t=1$. In other words, $H \sim \gamma \in M_+^{1}(\mathfrak{H})$. In this case, a canonical choice for the law of $U_1$ is the uniform distribution on $[0,1]$ and the functional $f$ may be chosen as
\begin{gather}
\label{canonical_construction}
X_k = f(U_k,H) := \inf\{ t \in \IR \,:\,H_t > U_k\}=H^{-1}_{U_k},\quad k=1,\ldots,d.
\end{gather}
Recall here that $H$ is interpreted as a random distribution function and $H^{-1}$ denotes its generalized inverse. In particular, $X_k \leq x$ if and only if $U_k \leq H_x$. Indeed, one verifies that $X_1,\ldots,X_d$ are iid conditioned on $\mathcal{H}:=\sigma\big( \{H_t\}_{t \in \IR}\big)$, with common univariate distribution function $H$, since 
\begin{align*}
\IP(X_1 \leq t_1,\ldots,X_d \leq t_d\,|\,\mathcal{H}) &= \IP(U_1 \leq H_{t_1},\ldots,U_d \leq H_{t_d}\,|\,\mathcal{H}) \\
&= H_{t_1}\,H_{t_2}\,\cdots\,H_{t_d}, 
\end{align*}
for all $t_1,\ldots,t_d \in \IR$. Every random vector which is conditionally iid can be constructed like this, i.e.\ there is a one-to-one relation between such models and random variables in the space of (one-dimensional) distribution functions, as already adumbrated in Definition \ref{def_condiid}. For each given $H=\{H_t\}_{t \in \IR} \sim \gamma \in M_+^{1}(\mathfrak{H})$, and a given dimension $d \in \IN$, the canonical construction (\ref{canonical_construction}) induces a multivariate probability distribution on $\IR^d$, and we denote this mapping from $M_+^{1}(\mathfrak{H})$ to a subset $\mathfrak{M}_{\ast}$ of $M_+^1(\IR^d)$ by $\Theta_d$ throughout. It is implicit that $\Theta_d(\gamma)$ depends on the law of $H \sim \gamma$ via
\begin{gather*}
\Theta_d(\gamma)\big( (-\infty,t_1]\times \ldots \times (-\infty,t_d]\big) = \IE[H_{t_1}\,\dots\,H_{t_d}].
\end{gather*}
Given $\mathfrak{M} \subset M_+^1(\IR^d)$ we denote the pre-image of $\mathfrak{M}_{\ast}$ under $\Theta_d$ in $M_+^{1}(\mathfrak{H})$ by $\Theta_d^{-1}(\mathfrak{M}_{*})$. In words, it equals the subset of $M_+^{1}(\mathfrak{H})$ which consists of all probability laws $\gamma$ of stochastic processes $\{H_t\}_{t \in \IR}$ such that $\bm{X}$ of the canonical construction (\ref{canonical_construction}) has a law in $\mathfrak{M}$, hence in $\mathfrak{M}_{\ast}$. From this equivalent viewpoint our motivating Problem \ref{motivatingproblem} becomes

\begin{problem}[Motivating problem reformulated] \label{motivatingproblem2}
For a given family of $d$-dimensional probability distributions $\mathfrak{M}$, determine the family of stochastic processes $\Theta_d^{-1}(\mathfrak{M}_{\ast}) \subset M_+^{1}(\mathfrak{H})$. 
\end{problem}

Admittedly, this reformulation in terms of the stochastic process $H$ might appear quite artificial at this point, but we will see later that in some cases we obtain interesting correspondences between classical probability distributions on $\IR^d$ and families of stochastic processes. On a high level, the problem of determining the intersection of a given family $\mathfrak{M}$ of distributions with the family of conditionally iid distributions may also be re-phrased as the problem of finding an increasing stochastic process whose stochastic nature induces the given multivariate distribution when inserted into a canonical stochastic model.
 
\begin{center}
		\begin{tikzpicture}[fill=gray]
\draw (-3,-2) rectangle (3,1)  (0,1) node [text=black,below] {given family} (0,0.5) node[text=black,below]{(some probability space)};
\draw (-1,-3.5) rectangle (5,-0.5)  (2,-3) node [text=black,above] {conditionally iid} (2,-3.5) node[text=black,above]{(canonical probability space)}  (1,-1) node [text=black,below] {?};
		\end{tikzpicture}
\end{center}

Obviously, in the stochastic model (\ref{canonical_construction}) it is possible to let $d$ tend to infinity, since the $U_k$ are iid. Thus, we may without loss of generality think of a conditionally iid random vector $\bm{X}$ as the first $d$ members of an infinite sequence $\{X_k\}_{k \in \IN}$ on $(\Omega,\F,\IP)$ such that conditioned on some $\sigma$-algebra $\mathcal{H} \subset \F$ the sequence $\{X_k\}_{k \in \IN}$ is iid. De Finetti's Theorem thus allows us to view conditionally iid random vectors $\bm{X}=(X_1,\ldots,X_d)$ as the first $d$ members of an infinite exchangeable sequence $\{X_k\}_{k \in \IN}$. More clearly, a probability law $\mu \in M_+^1(\IR^d)$ is conditionally iid if and only if there exists an infinite exchangeable sequence $\{X_k\}_{k \in \IN}$ on some probability space such that $\bm{X}=(X_1,\ldots,X_d) \sim \mu$. At this point it is important to highlight that we deal with a fixed dimension $d$. In general, it is possible that two truly different probability laws $\gamma_1 \neq \gamma_2$ are mapped onto the same element $\mu \in M_+^{1}(\IR^d)$ under the mapping $\Theta_d$. By virtue of de Finetti's Theorem, this ambiguity vanishes if we let $d$ tend to infinity in (\ref{canonical_construction}), i.e.\ the probability laws of $H$ and $\{X_k\}_{k \in \IN}$ stand in a one-to-one correspondence, see also Lemma \ref{lemma_condGC} below. In many cases of interest, we are actually not only interested in finding \emph{some} $\gamma$ that is mapped onto a given $\mu \in \mathfrak{M}$ under $\Theta_d$, but actually wish to find such $\gamma$ which is mapped onto a probability law with a desired property for arbitrary $n \geq 1$ by $\Theta_n$. In order to formalize this idea, we introduce the following definition.

\begin{definition}[Conditionally iid respecting (P)]
Let (P) be some property which makes sense in arbitrary dimension $n \geq 1$, and define the sets
\begin{gather*}
\mathfrak{M}_{n,(P)}:=\{\mu \in M_+^1(\IR^n)\,:\,\mu \mbox{ has property (P)}\} \subset M_+^1(\IR^n).
\end{gather*}
We say that $\mu \in \mathfrak{M}=\mathfrak{M}_{d,(P)}$ is \emph{conditionally iid respecting (P)} if there exists a stochastic process $H \in M_+^{1}(\mathfrak{H})$ whose probability law is mapped onto $\mu$ under $\Theta_d$ and, in addition, is mapped into an element of $\mathfrak{M}_{n,(P)}$ for arbitrary $n \geq 1$ under $\Theta_n$. We furthermore introduce the notation
\begin{gather*}
\mathfrak{M}_{\ast\ast}:=\{\mu \in \mathfrak{M}\,:\,\mu \mbox{ is conditionally iid respecting (P)}\}.
\end{gather*}
\end{definition}

Now we refine Problem \ref{motivatingproblem}.

\begin{problem}[Motivating problem refined] \label{motivatingproblemrefined}
Let (P) be a property that makes sense in any dimension, and consider $\mathfrak{M}=\mathfrak{M}_{d,(P)}$. For $\mu \in \mathfrak{M}$ provide necessary and sufficient conditions ensuring that $\mu \in \mathfrak{M}_{\ast\ast}$.
\end{problem}

In the situation of Problem \ref{motivatingproblemrefined} we have $\mathfrak{M}_{\ast\ast} \subset \mathfrak{M}_{\ast}$, and the inclusion can be proper in general, although this is unusual in cases of interest. A non-trivial example for the situation $\mathfrak{M}_{\ast\ast} \neq \mathfrak{M}_{\ast}$ is presented in Example \ref{ex_MstarnotMstarstar} in Section \ref{subsec_AC} below. A typical example for (P) is the property of ``being a multivariate normal distribution (in some dimension)''. For a given $d$-variate multivariate normal law it is a priori unclear whether there exists an infinite exchangeable sequence with $d$-margins being equal to the given multivariate normal law and such that all $n$-margins are multivariate normal as well for $n>d$. This is indeed the case and we have $\mathfrak{M}_{\ast\ast} = \mathfrak{M}_{\ast}$ in this particular situation, as can be inferred from Example \ref{ex_Normal}. The typical questions in the theory deal with subsets of $M_+^1(\IR^d)$ of the form $\mathfrak{M}=\mathfrak{M}_{d,(P)}$ for a property (P) that makes sense in arbitrary dimension, so most results presented are actually solutions to Problem \ref{motivatingproblemrefined} rather than to Problem \ref{motivatingproblem}, see also paragraph \ref{subsec_effectext} below for a further discussion related to this subtlety.

\subsubsection{Laws with positive components}
If the given family $\mathfrak{M}$ consists only of probability laws on $[0,\infty)^d$, it is convenient to slightly reformulate the stochastic model (\ref{canonical_construction}). Clearly, if we have non-negative components, necessarily $H_t=0$ for all $t < 0$ almost surely. Therefore, without loss of generality we may assume that $H=\{H_t\}_{t \geq 0}$ is indexed by $t \in [0,\infty)$. Moreover, applying the substitution $z=-\log(1-F)$ it trivially holds true that
\begin{align*}
\mathfrak{H}_+ & = \big\{t \mapsto 1-\exp(-z(t))\,\big|\,z:[0,\infty) \rightarrow [0,\infty] \mbox{ non-decreasing,}\\
& \qquad \qquad\mbox{right-continuous, with }z(0) \geq 0 \mbox{ and }\lim_{t \rightarrow \infty}z(t)=\infty \big\}.
\end{align*}
One may therefore rewrite the canonical construction (\ref{canonical_construction}) as
\begin{gather}
\label{canonical_construction_2}
X_k  := \inf\{ t \geq  0 \,:\,Z_t > \epsilon_k\},\quad k=1,\ldots,d,
\end{gather}
where the $\epsilon_k:=-\log(1-U_k)$, $k=1,\ldots,d$, are now iid exponential random variables with unit mean, and $Z=\{Z_t\}_{t \geq 0}$ is now no longer a distribution function, but instead a non-decreasing, right-continuous process with $Z_0 \geq 0$ and $\lim_{t \rightarrow \infty}Z_t=\infty$, related to $H$ via the substitution $Z_t=-\log(1-H_t)$. Figure \ref{fig:CC} illustrates one realization of the simulation mechanism (\ref{canonical_construction_2}). 
\vspace{1cm}

\begin{figure}[!ht]
\floatbox[{\capbeside\thisfloatsetup{capbesideposition={right,top},capbesidewidth=4cm}}]{figure}[7.5cm]
{\caption{\scriptsize One simulation of the canonical construction (\ref{canonical_construction_2}) in dimension $d=4$. One observes that $Z=\{Z_t\}_{t \geq 0}$ in this particular illustration has jumps, thus there is a positive probability that two components of $\bm{X}$ take the identical value. This does not happen if $Z$ is continuous, see Lemma \ref{lemma_pathcont} below.}}
{\includegraphics[width=8cm]{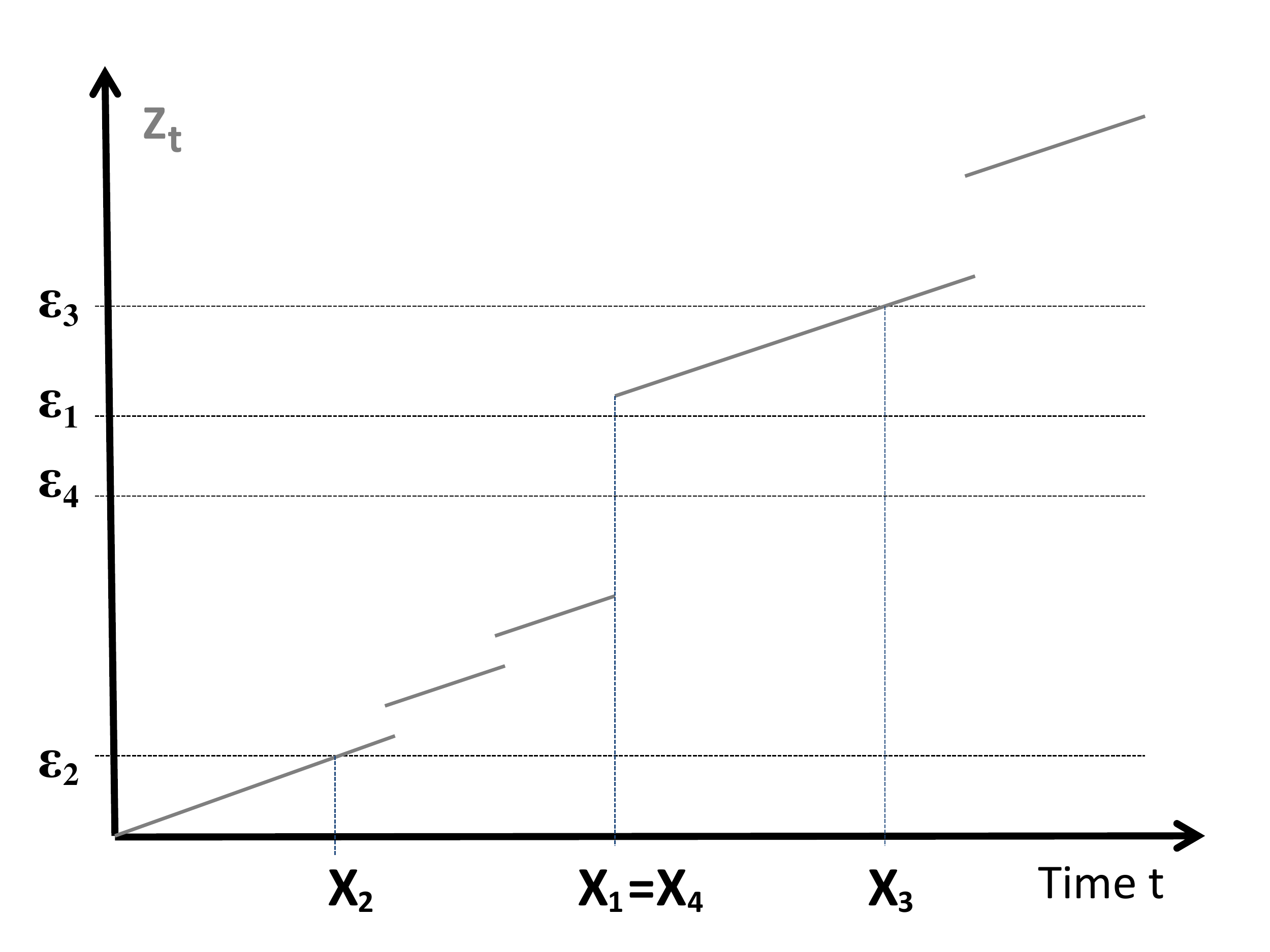} \label{fig:CC}}
\end{figure}

\subsection{General properties of conditionally iid models} \label{sec:generalM}
In this section we briefly collect some general properties of conditionally iid models. To this end, throughout this section we assume that $\mathfrak{M}=M_+^1(\IR^d)$ denotes the family of all $d$-dimensional probability laws on $\IR^d$ and we collect general properties of $\mathfrak{M}_{\ast}$. 
\subsubsection{Positive dependence}
If the law of $\bm{X}$ is in $\mathfrak{M}_{\ast}$, the covariance matrix of $\bm{X}$ - provided existence - cannot have negative entries. 
\begin{lemma}[Non-negative correlations] \label{lemma_nonegass}
If the law of $\bm{X}$ is in $\mathfrak{M}_{\ast}$ and the covariance matrix of $\bm{X}$ exists, then all its entries are non-negative. 
\end{lemma}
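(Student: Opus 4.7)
The plan is to mimic the argument already performed for the Gaussian family in Example \ref{ex_Normal} (equations (\ref{normcalc1})--(\ref{normcalc2})), but in the greater generality of an arbitrary conditionally iid law. The essential ingredient is the tower property of conditional expectation combined with Jensen's inequality, applied on a probability space on which $\bm{X}$ admits an explicit conditionally iid representation.

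The first step is to fix a realization. Since $\bm{X}$ is conditionally iid, Definition \ref{def_condiid} guarantees the existence of $\gamma \in M_+^1(\mathfrak{H})$ representing the distribution of $\bm{X}$, and the canonical construction (\ref{canonical_construction}) provides a random vector $\bm{X}'\stackrel{d}{=}\bm{X}$ on a probability space carrying a random distribution function $H\sim \gamma$ and iid uniforms $U_1,\ldots,U_d$, such that $X'_1,\ldots,X'_d$ are iid conditioned on $\mathcal{H}:=\sigma(\{H_t\}_{t \in \IR})$ with common (conditional) distribution function $H$. Because the covariance matrix of $\bm{X}$ exists, $X'_k \in L^2$, so all conditional moments to be invoked are well defined. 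By exchangeability (Lemma \ref{lemma_exchangeable}) all marginal means coincide; call them $\mu := \IE[X'_1]$.

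The diagonal entries of the covariance matrix are variances, hence non-negative. For an off-diagonal entry with $i \neq j$, conditional independence and the identical conditional laws give
\begin{align*}
\IE[X'_i\,X'_j] &= \IE\big[\IE[X'_i\,X'_j\,|\,\mathcal{H}]\big] = \IE\big[\IE[X'_i\,|\,\mathcal{H}]\,\IE[X'_j\,|\,\mathcal{H}]\big]\\
&= \IE\big[\IE[X'_1\,|\,\mathcal{H}]^2\big] \geq \big(\IE\big[\IE[X'_1\,|\,\mathcal{H}]\big]\big)^2 = \mu^2,
\end{align*}
where the last inequality is Jensen's inequality applied to the convex function $x \mapsto x^2$. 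Subtracting $\mu^2$ yields $\mathrm{Cov}(X'_i,X'_j) = \IE[X'_i X'_j]-\mu^2 \geq 0$, and since covariances are determined by the joint distribution, the same conclusion holds for $\bm{X}$ itself.

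There is no real obstacle here; the only point meriting care is that the argument lives on the canonical probability space rather than on the original one, which is legitimate because the statement concerns only the distribution of $\bm{X}$. One could alternatively invoke de Finetti's Theorem \ref{thm_definetti} to pass to an infinite exchangeable extension and take $\mathcal{H}$ to be its tail $\sigma$-field, but this is unnecessary: Definition \ref{def_condiid} already furnishes the required conditional independence structure directly.
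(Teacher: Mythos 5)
Your proof is correct and follows essentially the same route as the paper, which simply points back to the computations (\ref{normcalc1})--(\ref{normcalc2}): conditioning on $\mathcal{H}$, using conditional independence with identical conditional laws, and applying Jensen's inequality to get $\IE[X_iX_j]\geq\mu^2$. The extra care you take about passing to the canonical probability space is fine but not needed beyond what the paper already implicitly assumes.
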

\begin{proof}
This follows from precisely the same computations that have been carried out already in (\ref{normcalc1}) and (\ref{normcalc2}) for the particular example of the multivariate normal distribution. 
\end{proof}
Correlation coefficients are sometimes inappropriate dependence measurements outside the Gaussian paradigm, see \cite{mcneil05,taleb20}. For instance, their existence depends on the existence of second moments, or we might have a correlation coefficient that is strictly less than one despite the fact that one component of the random vector is a monotone function of the other, since correlation coefficients depend on the marginal distributions as well. For these reasons, several alternative dependence measurements have been developed. One popular among them is the concordance measurement \emph{Kendall's Tau}. Recall that $\bm{x},\bm{y} \in \IR^2$ are called \emph{concordant} if $(x_1-y_1)\,(x_2-y_2)>0$ and \emph{discordant} if $(x_1-y_1)\,(x_2-y_2)<0$. In words, concordance means that one of the two points lies north-east of the other, while discordance means that one of the two lies north-west of the other. Kendall's Tau for a bivariate random vector $\bm{X}$ is defined as the difference between the probability of concordance and the probability of discordance for two independent copies of $\bm{X}$. If $\bm{X}$ is conditionally iid, Kendall's Tau is necessarily non-negative.
\begin{lemma}[Non-negative Kendall's Tau] \label{lemma_kendall}
If the law of $\bm{X}=(X_1,X_2)$ is in $\mathfrak{M}_{\ast}$, then Kendall's Tau is necessarily non-negative. 
\end{lemma}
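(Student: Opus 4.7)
The plan is to exploit the conditionally iid representation to reduce Kendall's Tau to the expectation of an almost surely non-negative quantity of the form $(p-q)^2$. First I would invoke the canonical construction (\ref{canonical_construction}) to realize $\bm{X}=(X_1,X_2)$ via $X_k=H^{-1}_{U_k}$, where $H\sim\gamma \in M_+^1(\mathfrak{H})$ and $U_1,U_2$ are iid uniform on $[0,1]$ and independent of $H$. In order to have two truly independent copies of $\bm{X}$ at my disposal, I would then introduce an independent copy $(\tilde{H},\tilde{U}_1,\tilde{U}_2)$ of $(H,U_1,U_2)$ and set $\tilde{X}_k:=\tilde{H}^{-1}_{\tilde{U}_k}$, so that $\tilde{\bm{X}}=(\tilde{X}_1,\tilde{X}_2)$ is an independent copy of $\bm{X}$ on a suitably enlarged probability space.

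The crux of the argument is to condition on the \emph{pair} of latent processes $(H,\tilde{H})$. Conditional on $(H,\tilde{H})$, the two pairs $(X_1,\tilde{X}_1)$ and $(X_2,\tilde{X}_2)$ are themselves iid, because given $H$ the variables $X_1,X_2$ are iid with law $H$, given $\tilde{H}$ the variables $\tilde{X}_1,\tilde{X}_2$ are iid with law $\tilde{H}$, and $(X_1,X_2)$ is by construction independent of $(\tilde{X}_1,\tilde{X}_2)$. Setting $p:=\IP(X_1>\tilde{X}_1\,|\,H,\tilde{H})$ and $q:=\IP(X_1<\tilde{X}_1\,|\,H,\tilde{H})$, this conditional iid property of the pairs immediately gives $\IP(\text{concordance}\,|\,H,\tilde{H})=p^2+q^2$ and $\IP(\text{discordance}\,|\,H,\tilde{H})=2\,p\,q$. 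Subtracting these and taking the outer expectation yields $\tau=\IE[(p-q)^2]\geq 0$, which is the claim.

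There is essentially no obstacle in carrying this out; the only mildly subtle point is the realization that one must condition on both latent processes \emph{simultaneously} in order to obtain the factorization of the concordance/discordance events into the algebraic square $(p-q)^2$, whereas conditioning on only one of them (say $H$ alone) would not suffice. Note also that the argument works regardless of whether the conditional laws $H$ and $\tilde{H}$ have atoms, since any contribution from ties $\{X_1=\tilde{X}_1\}$ is absorbed into neither $p$ nor $q$ and thus does not affect the identity $\IP(\text{conc}\,|\,H,\tilde{H})-\IP(\text{disc}\,|\,H,\tilde{H})=(p-q)^2$.
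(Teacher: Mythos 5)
Your proposal is correct and follows essentially the same route as the paper: the paper also conditions on the latent factors of both independent copies, writes the conditional concordance probability as $p^2+q^2$ and the conditional discordance probability as $2\,p\,q$ (with $p=\int \tilde H_{x-}\,\mathrm{d}H_x$ and $q=\int H_{x-}\,\mathrm{d}\tilde H_x$ written out explicitly there), and concludes that Kendall's Tau equals $\IE[(p-q)^2]\geq 0$. The only difference is cosmetic, namely that you keep $p$ and $q$ as conditional probabilities rather than spelling them out as Stieltjes integrals.
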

\begin{proof}
Let $\bm{X}^{(1)}$ and $\bm{X}^{(2)}$ be two independent copies of $\bm{X}$, both defined on some common probability space. By assumption we find a $\sigma$-algebra $\mathcal{H}$ such that conditioned on $\mathcal{H}$ all four random variables $X^{(1)}_1,X^{(1)}_2,X^{(2)}_1,X^{(2)}_2$ are independent with respective distribution functions $H^{(1)}$ (for $X^{(1)}_1,X^{(1)}_2$) and $H^{(2)}$ (for $X^{(2)}_1,X^{(2)}_2$). Notice that $H^{(1)}$ and $H^{(2)}$ are iid. We compute
\begin{align*}
&\IP\big((X^{(1)}_1-X^{(2)}_1)\,(X^{(1)}_2-X^{(2)}_2\big)>0\big) \\
& \quad  = \IE\big[\IP\big((X^{(1)}_1-X^{(2)}_1)\,(X^{(1)}_2-X^{(2)}_2)>0\,\big|\,\mathcal{H}\big)\big]\\
 & \quad= \IE\big[\IP\big(X^{(1)}_1>X^{(2)}_1,X^{(1)}_2>X^{(2)}_2\,|\,\mathcal{H}\big)+\IP\big(X^{(1)}_1<X^{(2)}_1,X^{(1)}_2<X^{(2)}_2\,\big|\,\mathcal{H}\big) \big]\\
& \quad = \IE\Big[ \Big(\int H^{(2)}_{x-}\,\mathrm{d}H^{(1)}_x \Big)^2\Big]+\IE\Big[ \Big(\int H^{(1)}_{x-}\,\mathrm{d}H^{(2)}_x \Big)^2\Big]
\end{align*}
and analogously
\begin{align*}
&\IP\big((X^{(1)}_1-X^{(2)}_1)\,(X^{(1)}_2-X^{(2)}_2\big)<0\big)  \\
& \quad = \IE\big[\IP\big((X^{(1)}_1-X^{(2)}_1)\,(X^{(1)}_2-X^{(2)}_2)<0\,\big|\,\mathcal{H}\big)\big]\\
& \quad = 2\,\IE\Big[ \int H^{(2)}_{x-}\,\mathrm{d}H^{(1)}_x \,\int H^{(1)}_{x-}\,\mathrm{d}H^{(2)}_x \Big],
\end{align*}
so that Kendall's Tau equals
\begin{align*}
\IE\Big[\Big(\int H^{(2)}_{x-}\,\mathrm{d}H^{(1)}_x -\int H^{(1)}_{x-}\,\mathrm{d}H^{(2)}_x\Big)^2 \Big] \geq 0.
\end{align*}
\end{proof}

The next lemma is less intuitive on first glimpse, but like Lemmata \ref{lemma_nonegass} and \ref{lemma_kendall} it qualitatively states that laws in $\mathfrak{M}_{\ast}$ exhibit some sort of ``positive'' dependence. In order to understand it, it is useful to recall the notion of \emph{majorization}, see \cite{marshall79} for a textbook account on the topic. A vector $\bm{a}=(a_1,\ldots,a_d)$ is said to \emph{majorize} a vector $\bm{b}=(b_1,\ldots,b_d)$ if 
\begin{gather*}
\sum_{k=n}^{d}a_{[k]} \geq \sum_{k=n}^{d}b_{[k]},\quad n=2,\ldots,d,\quad \sum_{k=1}^{d}a_k=\sum_{k=1}^{d}b_k.
\end{gather*}
Intuitively, the entries of $\bm{b}$ are ``closer to each other'' than the entries of $\bm{a}$, even though the sum of all entries is identical for both vectors. For instance, the vector $(1,0,\ldots,0)$ majorizes the vector $(1/2,1/2,0,\ldots,0)$, which majorizes $(1/3,1/3,1/3,0,\ldots,0)$, and so on. 
\begin{lemma}[A link to majorization] \label{lemma_Shakedpos}
Consider $\bm{X}$ with law in $\mathfrak{M}_{\ast}$. Further, let $\bm{Y}=(Y_1,\ldots,Y_d)$ be a random vector with components that are iid and satisfy 
\begin{gather*}
Y_1 \stackrel{d}{=} X_1. 
\end{gather*}
We denote $F_Z(x):=\IP(Z \leq x)$ for real-valued $Z$ and $x \in \IR$. 
\begin{itemize}
\item[(a)] For arbitrary $x \in \IR$ the vector $\big(F_{Y_{[1]}}(x),\ldots,F_{Y_{[d]}}(x) \big)$ majorizes the vector $\big(F_{X_{[1]}}(x),\ldots,F_{X_{[d]}}(x) \big)$.
\item[(b)] For any measurable, real-valued function $g$ which is monotone on the support of $X_1$ the vector $\big(\IE[g(Y_{[1]})],\ldots,\IE[g(Y_{[d]})]\big)$ majorizes the vector $\big(\IE[g(X_{[1]})],\ldots,\IE[g(X_{[d]})] \big)$.
\end{itemize}
\end{lemma}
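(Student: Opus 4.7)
The plan is to reduce part (a) to a convex-order comparison for the binomial counts $N_x := \#\{k : X_k \leq x\}$ and $N_x^Y := \#\{k : Y_k \leq x\}$, and then to obtain part (b) from part (a) by pushing $g$ through and integrating.

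For part (a), conditioning on the random distribution function $H$ from Definition \ref{def_condiid} one has $N_x \mid \sigma(H) \sim \mathrm{Binomial}(d, H(x))$ whereas $N_x^Y \sim \mathrm{Binomial}(d, p)$ with $p := F_{X_1}(x) = \IE[H(x)]$. Because $\{X_{[k]} \leq x\} = \{N_x \geq k\}$, it follows that $F_{X_{[k]}}(x) = \IP(N_x \geq k)$ and $F_{Y_{[k]}}(x) = \IP(N_x^Y \geq k)$, and both vectors are already listed in \emph{decreasing} order in $k$. The total sums coincide, being equal to $\IE[N_x] = \IE[N_x^Y] = d\,p$, so majorization reduces to showing, for $m = 1, \ldots, d-1$,
\[
\sum_{k=1}^{m} F_{Y_{[k]}}(x) \;\geq\; \sum_{k=1}^{m} F_{X_{[k]}}(x).
\]
Using $\sum_{k=1}^{m} \IP(N \geq k) = \IE[N] - \IE[(N-m)^+]$, this is equivalent to the stop-loss inequality $\IE[(N_x^Y - m)^+] \leq \IE[(N_x - m)^+]$. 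I would then verify the key Bernstein-polynomial fact: for any convex $\phi : \IN_0 \to \IR$, the map $p \mapsto \varphi(p) := \IE[\phi(B_{d,p})]$ is convex on $[0,1]$. A two-step differentiation in $p$ gives $\varphi''(p) = d(d-1)\,\IE[\Delta^2 \phi(B_{d-2,p})]$, with $\Delta^2 \phi(j) := \phi(j+2) - 2\phi(j+1) + \phi(j) \geq 0$ by convexity of $\phi$. Specializing to $\phi(j) = (j-m)^+$ and applying Jensen's inequality to the random parameter $H(x)$ yields the required inequality, since
\[
\IE[(N_x - m)^+] \;=\; \IE\big[ \varphi(H(x)) \big] \;\geq\; \varphi\big(\IE[H(x)]\big) \;=\; \IE[(N_x^Y - m)^+].
\]

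For part (b), assume first that $g$ is non-decreasing. Then $\tilde{X}_k := g(X_k)$ is conditionally iid (with random distribution function being the pushforward of $H$ through $g$) and $\tilde{Y}_k := g(Y_k)$ are iid with $\tilde{Y}_1 \stackrel{d}{=} \tilde{X}_1$; monotonicity further guarantees $\tilde{X}_{[k]} = g(X_{[k]})$, likewise for $Y$. Applying part (a) to the transformed vectors, for every $t \in \IR$ the vector $(F_{\tilde{Y}_{[k]}}(t))_k$ majorizes $(F_{\tilde{X}_{[k]}}(t))_k$. Taking complements $1 - F$ preserves majorization (it reverses the sort direction but leaves every partial-sum comparison intact once one also uses equality of totals), and then integrating in $t$ via $\IE[Z] = \int_0^\infty (1 - F_Z(t))\,dt - \int_{-\infty}^0 F_Z(t)\,dt$ produces the vectors $(\IE[g(Y_{[k]})])_k$ and $(\IE[g(X_{[k]})])_k$. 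Integration commutes with the ``top-$m$'' partial-sum operation here because, for every fixed $t$, the components $(1 - F_{\tilde{Y}_{[k]}}(t))_k$ are already sorted in increasing $k$ (as $\tilde{Y}_{[1]} \leq \tilde{Y}_{[2]} \leq \ldots$), so the integrated vector inherits the same order and the top-$m$ sums can be computed either before or after integration. For non-increasing $g$ the identity $\tilde{X}_{[k]} = g(X_{[d-k+1]})$ introduces only a coordinate permutation, and majorization is permutation-invariant.

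The main obstacle is the convexity-of-Bernstein step underlying the stop-loss inequality in part (a); this is the one genuinely probabilistic input, and it captures the intuition that mixing iid Bernoullis with a random common success probability produces sums that are more spread out than keeping the probability fixed at its mean. Once this is in hand, both claims follow by bookkeeping.
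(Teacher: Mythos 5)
Your proposal is correct and follows essentially the same route as the paper's proof: condition on the latent distribution function $H$, reduce part (a) to a Jensen-type comparison of a binomial functional evaluated at the random success probability $H_x$ versus its mean, and deduce part (b) by pushing (a) through the monotone map $g$ and the tail-integral formula $\IE[Z]=\int_0^{\infty}1-F_Z(t)\,\mathrm{d}t-\int_{-\infty}^{0}F_Z(t)\,\mathrm{d}t$. Your stop-loss/Bernstein second-derivative computation is in effect a verification of the concavity of $h_{n,d}$ that the paper asserts as ``not difficult to verify'', so it adds detail to the same argument rather than a different approach.
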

\begin{proof}
This is \cite[Theorem 2.2 and Corollary 2.3]{shaked77}. By definition,
\begin{gather*}
\sum_{k=1}^{d}F_{X_{[k]}}(x) = \sum_{k=1}^{d}F_{X_{k}}(x)=d\,F_{X_1}(x) = d\,F_{Y_1}(x) = \sum_{k=1}^{d}F_{Y_{k}}(x)=\sum_{k=1}^{d}F_{Y_{[k]}}(x).
\end{gather*}
Since $F_{X_{[1]}}(x) \geq \ldots \geq F_{X_{[d]}}(x)$ and $F_{Y_{[1]}}(x) \geq \ldots \geq F_{X_{[d]}}(x)$, for part (a) we have to show that
\begin{gather*}
\sum_{k=1}^{n}F_{X_{[k]}}(x) \leq \sum_{k=1}^{n}F_{Y_{[k]}}(x),\quad n=1,\ldots,d-1.
\end{gather*}
First, it is not difficult to verify that 
\begin{gather*}
h_{n,d}(p):=\sum_{k=1}^{n}\sum_{i=k}^{d}\binom{d}{i}\,p^{i}\,(1-p)^{d-i},\quad p \in [0,1],
\end{gather*}
is concave for arbitrary $1 \leq n \leq d$. Second, concavity implies that
\begin{align*}
\sum_{k=1}^{n}F_{X_{[k]}}(x) & = \IE\Big[\sum_{k=1}^{n} \IP(X_{[k]} \leq x\,\Big|\,\mathcal{H})\Big] = \IE[h_{n,d}(H_x)]\\
& \leq h_{n,d}(\IE[H_x]) =  h_{n,d}(\IP(Y_1 \leq x)) = \sum_{k=1}^{n}F_{Y_{[k]}}(x),
\end{align*}
where Jensen's inequality has been used. Making use of the relation $\IE[Z]=\int_0^{\infty}1-F_Z(z)\,\mathrm{d}z-\int_{-\infty}^{0}F_Z(z)\,\mathrm{d}z$ for real-valued random variables $Z$, part (b) is obtained from (a) for the case $g(x)=x$. For the general case, one simply has to observe that the law of $\big(g(X_1),\ldots,g(X_d) \big)$ is also in $\mathfrak{M}_{\ast}$ and due to monotonicity of $g$ we have either $g(X_{[1]}) \leq \ldots \leq g(X_{[d]})$ in the non-decreasing case or $g(X_{[d]}) \leq \ldots \leq g(X_{[1]})$ in the non-increasing case.
\end{proof}
Intuitively, statement (b) in case $g(x)=x$ states that the expected values of the order statistics $\IE[X_{[k]}]$, $k=1,\ldots,d$, are closer to each other than the respective values if the components of $\bm{X}$ were iid (and not only conditionally iid). Intuitively, the components of a random vector $\bm{X}$ with components that are conditionally iid are thus less spread out than the components of a random vector with iid components. Thus, Lemmata \ref{lemma_nonegass}, \ref{lemma_kendall} and \ref{lemma_Shakedpos} show that dependence models built from a conditionally iid setup can only capture the situation of components being ``more clustered'' than independence, which is loosely interpreted as ``positive dependence''. Generally speaking, negative dependence concepts are more complicated than positive dependence concepts in dimensions $d \geq 3$, the interested reader is referred to \cite{puccetti14} for a nice overview and references dealing with such concepts.
\par
Whereas Lemmata \ref{lemma_nonegass}, \ref{lemma_kendall} and \ref{lemma_Shakedpos} provide three particular quantifications for positive dependence of a conditionally iid probability law, many other possible concepts of positive dependence can be found in the literature, a textbook account on the topic is \cite{mueller02}. \cite[Theorem 4]{scarsini85} claims that if $\bm{X}=(X_1,\ldots,X_d)$ is conditionally iid and $x \mapsto \IP(X_1 \leq x)$ is continuous, then 
\begin{gather*}
\IP(\bm{X} \leq \bm{x}) \geq \prod_{k=1}^{d}\IP(X_k \leq x_k),\quad \bm{x} \in \IR^{d},
\end{gather*}
a positive dependence property called \emph{positive lower orthant dependency}. However, here is a counterexample showing that \cite[Theorem 4]{scarsini85} is not correct and conditionally iid random vectors need not exhibit positive lower orthant dependency in general. 
\begin{example}[Conditionally iid $\centernot \implies$ positive lower orthant dependency]\label{ex_scarsini}
Let $M$ be uniformly distributed on $[0,1/2]$. Conditioned on $M$ let $\bm{X}=(X_1,X_2)$ be a vector of two iid random variables which have distribution function
\begin{gather*}
H_t = \frac{1}{2}\,1_{\{-M+\frac{1}{2}\leq t < M+\frac{1}{2}\}}+1_{\{t \geq M+\frac{1}{2}\}},\quad t \in \IR.
\end{gather*}
It is not difficult to compute that 
\begin{gather*}
\IP(\bm{X} \leq \bm{x}) = \IE[H_{x_1}\,H_{x_2}]= \frac{1}{2}\,x_{[1]}+\frac{1}{2}\,\max\{0,x_1+x_2-1\},\quad  x_1,x_2 \in [0,1],
\end{gather*}
and the distribution function of $\bm{X}$ is a copula, i.e.\ has standard uniform one-dimensional marginals. In particular, 
\begin{gather*}
\IP\Big(X_1 \leq \frac{1}{4},X_2 \leq \frac{3}{4} \Big)=\frac{1}{8}<\frac{3}{16}=\IP\Big(X_1 \leq \frac{1}{4}\Big)\,\IP\Big(X_2 \leq \frac{3}{4}\Big),
\end{gather*}
contradicting positive lower orthant dependency. Notice that Kendall's Tau for $\bm{X}$ is exactly equal to zero, and also the correlation coefficient between the components of $\bm{X}$ equals zero. Figure \ref{fig:scarsini} depicts a scatter plot of $1000$ samples from $\bm{X}$.
\end{example}

In contrast to Example \ref{ex_scarsini}, \cite{dykstra73} prove that the weaker property $\IP(X_1 \in A,\ldots,X_d \in A) \geq \IP(X_1 \in A)^d$ holds indeed true for conditionally iid $\bm{X}$ and an arbitrary measurable set $A \subset \IR$. This makes clear that a decisive point in Example \ref{ex_scarsini} is that the considered $x_i$ are different.

\begin{figure}[!ht]
\caption{$1000$ samples of $(X_1,X_2)$ from Example \ref{ex_scarsini}.}
\includegraphics[width=10cm]{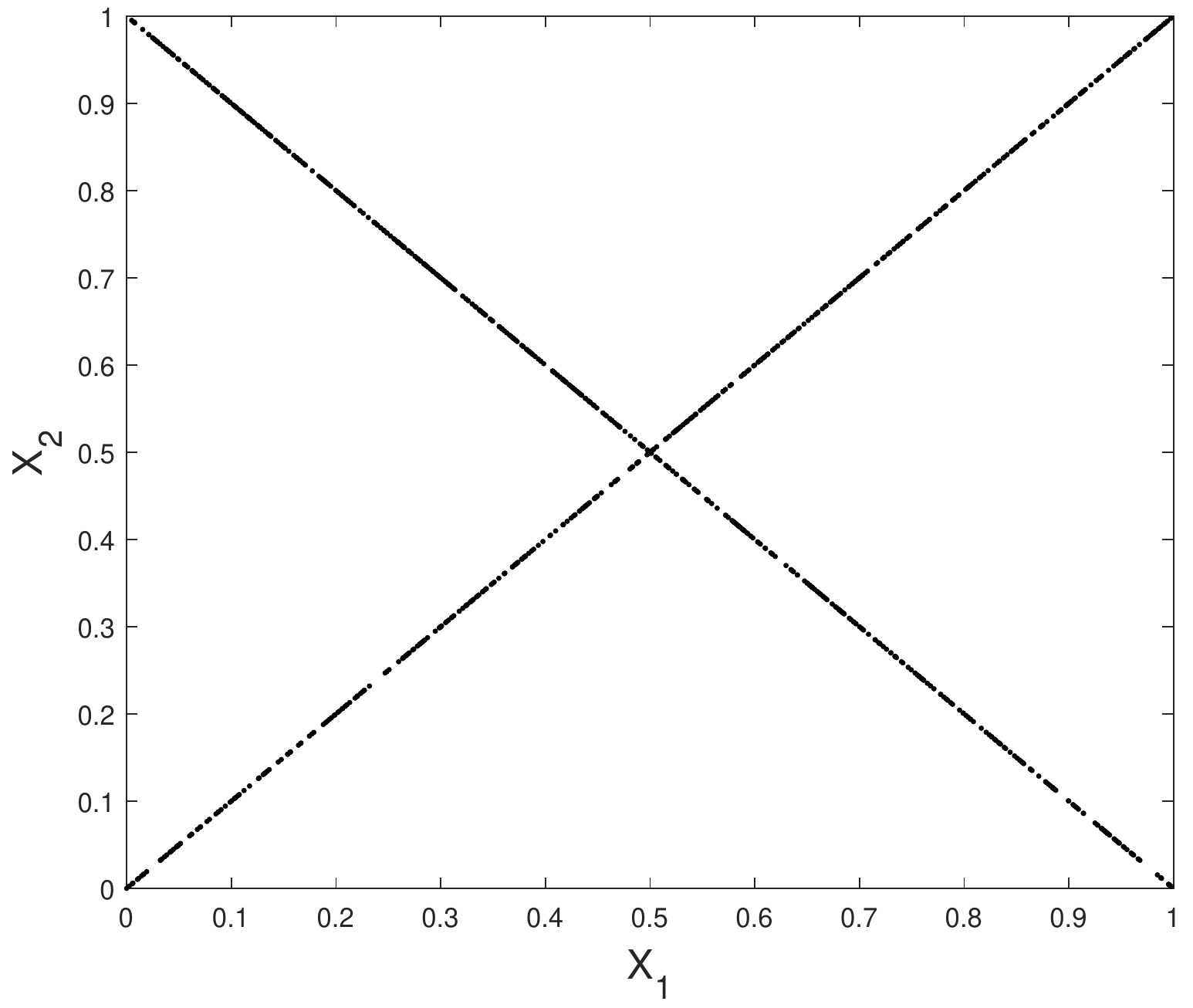} 
\label{fig:scarsini}
\end{figure}

\subsubsection{Further properties}
Even though it is obvious, we find it educational to point out explicitly that path continuity of $H$ corresponds to the absence of a singular component in the law of $\bm{X}$.
\begin{lemma}[Path continuity of $H$]\label{lemma_pathcont}
Let $H \sim \gamma \in M_+^1(\mathfrak{H})$ and consider the random vector $\bm{X}=(X_1,\ldots,X_d)$ constructed in Equation (\ref{canonical_construction}) for arbitrary $d \geq 2$.
Then $\IP(X_1=X_2)=0$ if and only if the paths of $H$ are almost surely continuous. 
\end{lemma}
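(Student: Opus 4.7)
The plan is to compute $\IP(X_1=X_2)$ by conditioning on $\mathcal{H}:=\sigma(\{H_t\}_{t \in \IR})$ and rewriting it as the expected sum of squared jumps of $H$; the resulting expression will vanish if and only if $H$ has no jumps almost surely, which is exactly path-continuity of $H$.

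First I would carefully unpack the canonical construction $X_k=\inf\{t \in \IR : H_t > U_k\}$ to identify the conditional law of $X_k$ given $\mathcal{H}$. Using monotonicity and right-continuity of $H$, together with the fact that $U_k$ is uniformly distributed on $[0,1]$ (so that it charges no single point given $\mathcal{H}$), one verifies the three cases: $X_k<x$ iff $U_k<H_{x-}$, $X_k=x$ iff $H_{x-}\leq U_k<H_x$, and $X_k>x$ iff $U_k\geq H_x$, modulo null sets. Hence
\[
\IP(X_k<x \mid \mathcal{H}) = H_{x-}, \qquad \IP(X_k=x \mid \mathcal{H}) = H_x - H_{x-}.
\]
In particular, the conditional law of $X_k$ places positive mass at $x$ precisely when $H$ has a jump at $x$.

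Second, since $H$ is monotone and bounded, its jump set $J(H):=\{x \in \IR : H_x>H_{x-}\}$ is (almost surely) at most countable, so the event $\{X_1=X_2\}$ decomposes up to a null set as the disjoint countable union $\bigcup_{x \in J(H)}\{X_1=X_2=x\}$. Combining this with conditional independence of $X_1,X_2$ given $\mathcal{H}$ (which holds by the canonical construction since $U_1,U_2$ are independent of $\mathcal{H}$ and of each other), I obtain
\[
\IP(X_1=X_2 \mid \mathcal{H}) \;=\; \sum_{x \in J(H)} \big(H_x - H_{x-}\big)^2.
\]

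Finally, taking expectations yields
\[
\IP(X_1=X_2) \;=\; \IE\Big[\sum_{x \in J(H)}\big(H_x - H_{x-}\big)^2\Big].
\]
Since the integrand is nonnegative, this is zero if and only if $\sum_{x \in J(H)}(H_x-H_{x-})^2=0$ almost surely, equivalently $J(H)=\emptyset$ almost surely, which is exactly the statement that $H$ has almost surely continuous paths. I do not anticipate a substantive obstacle: the whole argument reduces to a careful case analysis relating the position of $U_k$ in $[0,1]$ to the jump structure of $H$, after which the atom-counting identity for $\IP(X_1=X_2 \mid \mathcal{H})$ is immediate.
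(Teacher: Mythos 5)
Your proposal is correct and follows essentially the same route as the paper: condition on $\mathcal{H}=\sigma(H)$, use that $X_1,X_2$ are conditionally iid with common distribution function $H$, and observe that two iid draws coincide with positive probability exactly when that distribution function has an atom. You merely make explicit, via the identity $\IP(X_1=X_2\mid\mathcal{H})=\sum_{x}(H_x-H_{x-})^2$ over the countable jump set, the standard fact the paper invokes without computation.
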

\begin{proof}
Conditioned on the $\sigma$-algebra $\mathcal{H}$ generated by $H$, the random variables $X_1,X_2$ are iid with distribution function $H$. Since two iid random variables take exactly the same value with positive probability if and only if their common distribution function has at least one jump, the claim follows.
\end{proof}

The following result is shown in \cite[Proposition 4.2]{shaked77}, but we present a slightly different proof.
\begin{lemma}[Closure under convergence in distribution]\label{lemma_closureciid}
If $\bm{X}^{(n)}$ are conditionally iid and converge in distribution to $\bm{X}$, then the law of $\bm{X}$ is also conditionally iid.
\end{lemma}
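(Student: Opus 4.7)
The plan is to lift each $\bm{X}^{(n)}$ to an infinite exchangeable sequence, extract a weakly convergent subsequence of the extensions, and then apply de Finetti's Theorem to the limit. The main work is a tightness argument on $\IR^{\IN}$, while the heavy lifting is done by Theorem \ref{thm_definetti}.

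First, for each $n \in \IN$, since $\bm{X}^{(n)}$ is conditionally iid, the canonical construction (\ref{canonical_construction}) produces an infinite sequence $\{X_k^{(n)}\}_{k \in \IN}$ defined on a common probability space that is iid conditioned on $\mathcal{H}^{(n)} = \sigma(H^{(n)})$ for some $H^{(n)} \sim \gamma_n \in M_+^1(\mathfrak{H})$, and whose first $d$ coordinates are distributed as $\bm{X}^{(n)}$. By Lemma \ref{lemma_exchangeable}, each $\{X_k^{(n)}\}_{k \in \IN}$ is exchangeable.

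Next I would establish tightness of the laws of $\{X_k^{(n)}\}_{k \in \IN}$ on $\IR^{\IN}$ (equipped with the product topology). Convergence in distribution of $\bm{X}^{(n)}$ to $\bm{X}$ implies convergence of the one-dimensional marginal laws of $X_1^{(n)}$, and in particular the family $\{\mathcal{L}(X_1^{(n)})\}_{n \in \IN}$ is tight in $M_+^1(\IR)$. By exchangeability, every one-dimensional marginal of $\{X_k^{(n)}\}_{k \in \IN}$ coincides with the law of $X_1^{(n)}$, so for each fixed $N$ the $N$-dimensional marginals $(X_1^{(n)}, \ldots, X_N^{(n)})$ form a tight family as $n$ varies (marginal tightness implies joint tightness in finite dimension). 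Since tightness on the countable product $\IR^{\IN}$ is equivalent to tightness in every finite-dimensional projection, the whole sequence of sequence-laws is tight.

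By Prokhorov's theorem, I can extract a subsequence $n_\ell$ such that $\{X_k^{(n_\ell)}\}_{k \in \IN}$ converges in distribution on $\IR^{\IN}$ to some infinite sequence $\{X_k^{\ast}\}_{k \in \IN}$. Exchangeability of each finite-dimensional marginal is preserved under weak limits (it is a statement about equality of laws for bounded continuous test functions), hence $\{X_k^{\ast}\}_{k \in \IN}$ is an infinite exchangeable sequence. By de Finetti's Theorem \ref{thm_definetti}, this sequence is iid conditioned on its tail $\sigma$-field, so in particular $(X_1^{\ast}, \ldots, X_d^{\ast})$ is conditionally iid. Finally, $(X_1^{(n_\ell)}, \ldots, X_d^{(n_\ell)})$ converges in distribution to $\bm{X}$ by hypothesis and to $(X_1^{\ast}, \ldots, X_d^{\ast})$ by construction, so $\bm{X} \stackrel{d}{=} (X_1^{\ast}, \ldots, X_d^{\ast})$, which gives the claim.

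The only delicate point is the tightness step on $\IR^{\IN}$, which reduces to finite-dimensional tightness and then to the tightness of the one-dimensional marginals $\{\mathcal{L}(X_1^{(n)})\}_{n \in \IN}$; everything else is just an application of de Finetti's Theorem to the limit exchangeable sequence.
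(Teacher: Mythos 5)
Your proof is correct, but it takes a different route from the paper's. You work at the level of the observable sequences: you lift each $\bm{X}^{(n)}$ to an infinite exchangeable sequence, prove tightness of the sequence-laws on $\IR^{\IN}$ (using that weak convergence of $\bm{X}^{(n)}$ forces tightness of the one-dimensional marginals, and that exchangeability spreads this to all coordinates), extract a limit via Prokhorov, and then invoke de Finetti's Theorem \ref{thm_definetti} for the limiting exchangeable sequence. The paper instead works at the level of the latent factors: it represents each $\bm{X}^{(n)}$ via (\ref{canonical_construction}) with a random distribution function $H^{(n)}$, notes that the set of distribution functions of $[-\infty,\infty]$-valued random variables is compact (Helly's selection theorem, L\'evy metric), so the laws of the $H^{(n)}$ form a compact set of measures; it extracts a subsequential limit $H$, passes to the limit in $\IE[H^{(n_k)}_{x_1}\cdots H^{(n_k)}_{x_d}]$ by bounded convergence, and only at the end rules out mass at $\pm\infty$ using that $\bm{X}$ is $\IR^d$-valued. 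The trade-off: your argument uses heavier probabilistic machinery (Prokhorov plus de Finetti) but needs no compactification of the latent space, with the possible escape of mass handled up front by the tightness step; the paper's argument bypasses de Finetti and tightness entirely by compactifying $\mathfrak{H}$ and deferring the ``no mass at infinity'' issue to the final line, at the cost of quoting compactness of $M_+^1$ of a compact Hausdorff space. One small point of rigor in your version, shared with the paper's general usage: to conclude from ``iid conditioned on the tail $\sigma$-field'' that $(X_1^{\ast},\ldots,X_d^{\ast})$ is conditionally iid in the sense of Definition \ref{def_condiid}, you implicitly use the existence of a regular conditional distribution (so that the conditional law is a genuine random element of $\mathfrak{H}$); this is unproblematic since $\IR$ is Polish, but worth stating.
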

\begin{proof}
Since we only deal with a statement in distribution, we are free to assume that each $\bm{X}^{(n)}$ is represented as in (\ref{canonical_construction}) from some stochastic process $H^{(n)}=\{H^{(n)}_t\}_{t \in \IR}$, and all objects are defined on the same probability space $(\Omega,\F,\IP)$. The random objects $H^{(n)}$ take values in the set of distribution functions of random variables taking values in $[-\infty,\infty]$. This set is compact by Helly's Selection Theorem and Hausdorff when equipped with the topology of pointwise convergence at all continuity points of the limit, see \cite{sibley71}. Thus, the probability measures on this set are a compact set by \cite[Corollary II.4.2, p.\ 104]{alfsen71}. This implies that we find a convergent subsequence $\{n_k\}_{k \in \IN} \subset \IN$ such that $H^{(n_k)}$ converges in distribution to some limiting stochastic process $H$, which takes itself values in the set of distribution functions of random variables taking values in $[-\infty,\infty]$. It is now not difficult to see that
\begin{align*}
\IP(X_1 \leq x_1,\ldots,X_d \leq  x_d)&=\lim_{k \rightarrow \infty}\IP(X^{(n_k)}_1 \leq x_1,\ldots,X^{(n_k)}_d \leq x_d)\\
 &=\lim_{k \rightarrow \infty}\IE[H^{(n_k)}_{x_1}\,\cdots \,H^{(n_k)}_{x_d}]=\IE[H_{x_1}\,\cdots \,H_{x_d}],
\end{align*}
where bounded convergence is used in the last equality. This implies that the law of $\bm{X}$ can be constructed canonically like in (\ref{canonical_construction}), hence $\bm{X}$ is conditionally iid. Finally, since $\bm{X}$ is assumed to take values in $\IR^d$, necessarily $H$ is almost surely the distribution function of a random variable taking values in $\IR$ (instead of $[-\infty,\infty]$).
\end{proof}

Recall that a random vector $(X_1,\ldots,X_d)$ is called \emph{radially symmetric} if there exist $\mu_1,\ldots,\mu_d \in \IR$ such that
\begin{gather*}
(X_1-\mu_1,\ldots,X_d-\mu_d) \stackrel{d}{=} (\mu_1-X_1,\ldots,\mu_d-X_d).
\end{gather*}
If $(X_1,\ldots,X_d)$ is constructed as in Equation (\ref{canonical_construction}), then radial symmetry can be translated into a symmetry property of the random distribution function $H$, which is the content of the following lemma. 

\begin{lemma}[Radial symmetry] \label{lemma_rs}
Let $H \sim \gamma \in M_+^1(\mathfrak{H})$. The random vector $(X_1,\ldots,X_d)$ constructed in Equation (\ref{canonical_construction}) is radially symmetric if and only if there is some $\mu \in \IR$ such that
\begin{gather*}
\{H_{\mu-t}\}_{t \in \IR} \stackrel{d}{=} \big\{ 1-H_{(t+\mu)-}\big\}_{t \in \IR}.
\end{gather*}
\end{lemma}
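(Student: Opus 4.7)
The plan is to translate the radial symmetry of $\bm{X}$ into a finite-dimensional distributional statement about $H$ via the canonical construction (\ref{canonical_construction}). The two key identities I would record upfront are $\IP(X_k \le x_k,\,\forall k) = \IE[\prod_{k=1}^d H_{x_k}]$ (already in the excerpt) and its complementary version
\[
\IP(X_k \ge x_k,\,\forall k) = \IE\Bigl[\prod_{k=1}^d (1 - H_{x_k -})\Bigr].
\]
The latter is obtained by unpacking (\ref{canonical_construction}): $X_k \ge t$ holds iff $H_s \le U_k$ for every $s < t$, i.e.\ iff $U_k \ge H_{t-}$; and conditional on $\mathcal{H}=\sigma(H_s, s\in\IR)$ the $U_k$ are iid uniform and independent of $H$, yielding the product form.

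For the ``if'' direction, assume the stated equality in law of the two processes. Using $\{2\mu - X_k \le x_k\} = \{X_k \ge 2\mu - x_k\}$ componentwise and the second identity above,
\[
\IP(2\mu - X_k \le x_k,\,\forall k) = \IE\Bigl[\prod_{k=1}^d (1 - H_{(2\mu - x_k)-})\Bigr].
\]
Substituting $s_k := \mu - x_k$ and applying the hypothesis to the tuple $(s_1,\ldots,s_d)$ gives $(1 - H_{(\mu+s_k)-})_{k=1}^d \stackrel{d}{=} (H_{\mu-s_k})_{k=1}^d = (H_{x_k})_{k=1}^d$, so the right-hand side equals $\IE[\prod_{k=1}^d H_{x_k}] = \IP(X_k \le x_k,\,\forall k)$, establishing radial symmetry about $\mu$.

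For the ``only if'' direction I would first reduce to a common center $\mu$: by Lemma \ref{lemma_exchangeable}, $\bm{X}$ is exchangeable, so each $X_k$ has the same one-dimensional law which is symmetric about $\mu_k$; since a probability measure on $\IR$ cannot be symmetric about two distinct points (else it would be invariant under a non-trivial translation), all $\mu_k$ coincide with a common $\mu$. Next, since the canonical construction naturally yields an infinite exchangeable sequence whose finite margins are all radially symmetric about $\mu$, the identity of the sufficiency direction holds in every dimension $n \ge 1$, i.e.
\[
\IE\Bigl[\prod_{k=1}^n H_{\mu - t_k}\Bigr] = \IE\Bigl[\prod_{k=1}^n (1 - H_{(\mu+t_k)-})\Bigr]
\]
for all $n$ and all $(t_1,\ldots,t_n) \in \IR^n$. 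These are exactly all mixed moments of the two $[0,1]$-valued processes $\{H_{\mu-t}\}_{t\in\IR}$ and $\{1 - H_{(\mu+t)-}\}_{t\in\IR}$; the Hausdorff moment problem on $[0,1]^n$ (applied for each $n$) then upgrades moment equality to equality of joint distributions in every dimension, hence equality in law of the processes.

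The main obstacle I expect is the subtlety just flagged: a single fixed $d$ yields only mixed moments involving at most $d$ factors of $H$, which is insufficient to determine the joint law of the processes on the right-hand side. The proof therefore relies both on exploiting the infinite extendibility built into the canonical construction (letting the test dimension $n$ range over all of $\IN$) and on the Hausdorff moment theorem to convert the resulting family of moment identities into the desired process-level equality in law.
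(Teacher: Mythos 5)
Your proposal is correct and follows essentially the same route as the paper's proof: both directions reduce to the mixed-moment identity $\IE[\prod_{k}H_{\mu-t_k}]=\IE[\prod_{k}(1-H_{(\mu+t_k)-})]$, with sufficiency read off directly from the canonical construction and necessity obtained by letting the number of factors range over all of $\IN$ and using that the law of $H$ is determined by all its mixed moments — your Hausdorff-moment step is precisely the paper's closing remark that $d$ can be chosen arbitrary and that the law of $H$ is uniquely determined by the infinite exchangeable sequence. Your explicit reduction to a common center $\mu$ via exchangeability is a welcome detail the paper leaves implicit.
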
 
\begin{proof}
On the one hand, we observe
\begin{align*}
& \IP(\mu-X_1 \leq x_1,\ldots,\mu-X_d \leq x_d) = \IP(H_{(\mu+x_1)-} \leq U_1,\ldots,H_{(\mu+x_d)-} \leq U_d)\\
& \qquad = \IE\Big[ (1-H_{(\mu+x_1)-})\,\cdots\,(1-H_{(\mu+x_d)-})\Big].
\end{align*}
On the other hand, we have 
\begin{gather*}
\IP(X_1-\mu \leq x_1,\ldots,X_d-\mu \leq x_d) = \IE[H_{\mu-x_1}\,\cdots\,H_{\mu-x_d}],
\end{gather*}
from where the claimed equivalence can now be deduced easily. Notice that the conditionally iid structure implies that $d$ can be chosen arbitrary and the law of $H$ is determined uniquely by the law of an infinite exchangeable sequence $\{X_k\}_{k \in \IN}$ constructed as in (\ref{canonical_construction}) with $d \rightarrow \infty$.
\end{proof}
\begin{example}[The multivariate normal law, again]\label{ex_normalcont}
The most prominent radially symmetric distribution is the multivariate normal law. Recalling Example \ref{ex_Normal}, it follows from (\ref{constr_exnormal}) that $\mathcal{N}(\bm{\mu},\Sigma)_{\ast}$, the conditionally iid normal laws, are induced by the stochastic process $\{H_t\}_{t \geq 0}$ given by 
\begin{gather}
H_t = \Phi\Bigg( \frac{\frac{t-\mu}{\sigma}-\sqrt{\rho}\,M}{\sqrt{1-\rho}}\Bigg),\quad t \in \IR,
\label{H_normal}
\end{gather}  
for some $\mu \in \IR$, $\sigma>0$, and $\rho \in [0,1]$, and a random variable $M \sim \Phi = $ distribution function of a standard normal law. The reader may check herself that this random distribution function $H$ satisfies the property of Lemma \ref{lemma_rs}.
\end{example}

An immediate but quite useful property of a conditionally iid model is the following corollary to the classical Glivenko-Cantelli Theorem.
\begin{lemma}[Conditional Glivenko-Cantelli] \label{lemma_condGC}
Let $\{X_k\}_{k \in \IN}$ be an infinite exchangeable sequence defined by the canonical construction (\ref{canonical_construction}) from an infinite iid sequence $\{U_k\}_{k \in \IN}$ and an independent random distribution function $H \sim \gamma  \in M_+^1(\mathfrak{H})$. It holds almost surely and uniformly in $t \in \IR$ that
\begin{gather*}
\frac{1}{d}\,\sum_{k=1}^{d}1_{\{X_k \leq t\}} \longrightarrow H_t,\quad \mbox{as }d \rightarrow \infty.
\end{gather*} 
\end{lemma}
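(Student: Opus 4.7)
The plan is to reduce the statement to the classical Glivenko--Cantelli theorem for iid uniform random variables, exploiting the fact that the canonical construction (\ref{canonical_construction}) writes $X_k = H^{-1}_{U_k}$ with $\{U_k\}_{k \in \IN}$ iid uniform on $[0,1]$ and independent of $H$. The key observation is the equivalence $\{X_k \leq t\} = \{U_k \leq H_t\}$, which follows from $X_k \leq t \iff U_k \leq H_t$ as noted after (\ref{canonical_construction}). Hence, setting
\begin{gather*}
F_d(u):=\frac{1}{d}\sum_{k=1}^{d}1_{\{U_k \leq u\}},\quad u \in [0,1],
\end{gather*}
we have the pointwise identity $\frac{1}{d}\sum_{k=1}^{d}1_{\{X_k \leq t\}}=F_d(H_t)$ for every $t \in \IR$.

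Next I would use that $H_t \in [0,1]$ for every $t$ almost surely, which yields the trivial uniform bound
\begin{gather*}
\sup_{t \in \IR}\Big| \frac{1}{d}\sum_{k=1}^{d}1_{\{X_k \leq t\}} - H_t \Big| = \sup_{t \in \IR}\big|F_d(H_t)-H_t\big| \leq \sup_{u \in [0,1]}\big|F_d(u)-u\big|,
\end{gather*}
an inequality that holds pointwise on the underlying probability space $(\Omega,\F,\IP)$. The right-hand side involves only the iid uniform sample $U_1,\ldots,U_d$ and is precisely the Kolmogorov--Smirnov discrepancy of the uniform empirical distribution function from the uniform cdf on $[0,1]$.

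Finally, I would invoke the classical Glivenko--Cantelli theorem for iid uniform random variables, which guarantees $\sup_{u \in [0,1]}|F_d(u)-u| \to 0$ almost surely as $d \to \infty$. This event has full probability regardless of $H$, thanks to the independence between $\{U_k\}_{k \in \IN}$ and $H$, so the desired almost sure and uniform convergence follows at once from the sandwiching inequality. No measure-theoretic subtleties from conditioning on $H$ are needed in this route, because the $H$-dependence is completely absorbed into the evaluation point of an empirical process that converges uniformly over all of $[0,1]$. I do not anticipate a genuine obstacle; the only step to state carefully is the pointwise identity $\{X_k \leq t\}=\{U_k \leq H_t\}$, which rests on $H$ being right-continuous and $H^{-1}$ being its generalized inverse.
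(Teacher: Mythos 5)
Your proof is correct, but it takes a genuinely different route from the paper's. The paper conditions on $\mathcal{H}=\sigma(H)$: given $\mathcal{H}$ the $X_k$ are iid with distribution function $H$, so the classical Glivenko--Cantelli theorem applies conditionally, and the claim follows by writing the probability of the convergence event as the expectation of a conditional probability that equals one. You avoid conditioning altogether: via the quantile-transform identity $\{X_k \leq t\}=\{U_k \leq H_t\}$ you rewrite the empirical distribution function of $X_1,\ldots,X_d$ as the uniform empirical distribution function $F_d$ evaluated at the random point $H_t$, bound the supremum over $t$ by the Kolmogorov--Smirnov statistic $\sup_{u \in [0,1]}|F_d(u)-u|$, and invoke Glivenko--Cantelli once, unconditionally, for the iid uniforms. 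What your route buys is a pathwise sandwich that needs no conditional-probability manipulation (and no discussion of measurability of the conditional event); the paper's argument is shorter and stays within the conditionally iid viewpoint used throughout. One small caveat: with the paper's definition $X_k=\inf\{t:H_t>U_k\}$, only the inclusion $\{X_k\leq t\}\subset\{U_k\leq H_t\}$ holds surely, and the reverse can fail when $U_k$ equals a level at which $H$ has a flat stretch; since the set of such levels is countable and $U_k$ is uniform and independent of $H$, the identity holds simultaneously in $t$ outside a null set (alternatively, sandwich with $1_{\{U_k<H_t\}}\leq 1_{\{X_k\leq t\}}\leq 1_{\{U_k\leq H_t\}}$ and control both empirical processes by Glivenko--Cantelli), so your conclusion is unaffected.
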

\begin{proof}
Follows immediately from the classical Glivenko-Cantelli Theorem, which is applied in the second equality below:
\begin{align*}
& \IP \Big( \lim_{d \rightarrow \infty} \sup_{t \in \IR} \Big|\frac{1}{d}\,\sum_{k=1}^{d}1_{\{X_k \leq t\}} - H_t \Big|=0\Big) \\
& \qquad = \IE\Big[\IP \Big( \lim_{d \rightarrow \infty} \sup_{t \in \IR} \Big|\frac{1}{d}\,\sum_{k=1}^{d}1_{\{X_k \leq t\}} - H_t \Big| = 0 \,\Big|\,\mathcal{H}\Big) \Big]= \IE[1 ] =1.
\end{align*}
\end{proof}
The stochastic nature of the process $\{H_t\}_{t \in \IR}$ clearly determines the law of $\bm{X}$. Conversely, Lemma \ref{lemma_condGC} tells us that the law of the $d$-dimensional vector $\bm{X}$ does not determine the law of the underlying latent factor $\{H_t\}_{t \in \IR}$ in general, but accomplishes this in the limit as $d \rightarrow \infty$. Given some infinite exchangeable sequence of random variables $\{X_k\}_{k \in \IN}$, it shows how we can recover its latent random distribution function $H$. 
\par
A rather obvious property of the set $\mathfrak{M}_{\ast}$ is convexity.
\begin{lemma}[$\mathfrak{M}_{\ast}$ is convex with extreaml boundary the product measures]\label{lemma_convex}
If $\mu_1,\mu_2 \in \mathfrak{M}_{\ast}$ and $\epsilon \in (0,1)$, then $\epsilon\,\mu_1+(1-\epsilon)\,\mu_2 \in \mathfrak{M}_{\ast}$. Furthermore, if $\mu \in \mathfrak{M}_{\ast}$ is \emph{extremal}, meaning that $\mu = \epsilon\,\mu_1+(1-\epsilon)\,\mu_2$ for some $\epsilon \in (0,1)$ and $\mu_1,\mu_2 \in \mathfrak{M}_{\ast}$ necessarily implies $\mu=\mu_1=\mu_2$, then $\mu$ is a product measure\footnote{Meaning that the components of $\bm{X} \sim \mu$ are iid.}.
\end{lemma}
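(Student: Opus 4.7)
The plan is to dispatch convexity immediately using that the canonical construction $\Theta_d$ is affine in $\gamma$, and then to handle the extremality claim by forcing any de Finetti representing measure $\gamma \in M_+^1(\mathfrak{H})$ of an extremal $\mu$ to be a Dirac mass at the common one-dimensional marginal distribution function of $\mu$.

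For convexity, pick $\mu_i = \Theta_d(\gamma_i) \in \mathfrak{M}_{\ast}$, $i=1,2$, with $\gamma_i \in M_+^1(\mathfrak{H})$. Because
\begin{gather*}
\Theta_d(\gamma)\big((-\infty,t_1]\times \ldots \times (-\infty,t_d]\big) = \int_{\mathfrak{H}} h(t_1)\,\cdots\,h(t_d)\,\gamma(\mathrm{d}h)
\end{gather*}
depends linearly on $\gamma$, the probability measure $\epsilon\,\gamma_1+(1-\epsilon)\,\gamma_2 \in M_+^1(\mathfrak{H})$ represents $\epsilon\,\mu_1+(1-\epsilon)\,\mu_2$ under $\Theta_d$, so the mixture is in $\mathfrak{M}_{\ast}$.

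For the extremal statement, I take $\mu \in \mathfrak{M}_{\ast}$ extremal with some representing $\gamma$. For every Borel set $A \subset \mathfrak{H}$ with $\gamma(A) \in (0,1)$ I form the conditional laws $\gamma_A(\cdot):=\gamma(\cdot \cap A)/\gamma(A)$ and $\gamma_{A^c}$ analogously, so that $\gamma=\gamma(A)\,\gamma_A+\gamma(A^c)\,\gamma_{A^c}$. By the first part, this yields a decomposition $\mu = \gamma(A)\,\Theta_d(\gamma_A)+\gamma(A^c)\,\Theta_d(\gamma_{A^c})$ within $\mathfrak{M}_{\ast}$, and extremality forces $\Theta_d(\gamma_A) = \mu$ for every such $A$. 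Since the corresponding identity also holds trivially for $\gamma(A) \in \{0,1\}$, it reads for each fixed $\bm{t} \in \IR^d$ as
\begin{gather*}
\int_{A} h(t_1)\,\cdots\,h(t_d)\,\gamma(\mathrm{d}h) = F_{\mu}(\bm{t})\,\gamma(A)\quad\mbox{for every Borel }A \subset \mathfrak{H},
\end{gather*}
where $F_{\mu}$ denotes the distribution function of $\mu$; equivalently, $h \mapsto h(t_1)\,\cdots\,h(t_d)$ equals the constant $F_{\mu}(\bm{t})$ $\gamma$-almost everywhere.

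The main (mild) obstacle is to produce a single $\gamma$-null set outside of which the previous identity holds simultaneously for all $\bm{t} \in \IR^d$. Choosing a countable dense set $D \subset \IR^d$ and unioning the countably many associated $\gamma$-null sets does the trick: for $\gamma$-almost every $h$, the product $h(t_1)\,\cdots\,h(t_d)$ agrees with $F_{\mu}(\bm{t})$ for all $\bm{t} \in D$, hence (by right-continuity of both sides in $\bm{t}$) for all $\bm{t} \in \IR^d$. In particular, for $\gamma$-almost every $h$ the $d$-variate product distribution function $\bm{t} \mapsto h(t_1)\,\cdots\,h(t_d)$ coincides with $F_{\mu}$, so its one-dimensional marginal $h$ is uniquely determined as the common one-dimensional marginal $F$ of $\mu$. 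Consequently $\gamma=\delta_F$, and $\mu = \Theta_d(\delta_F)$ is the product measure with marginals $F$, as claimed.
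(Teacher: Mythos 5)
Your proof is correct. The convexity part is exactly the paper's argument (affinity of $\Theta_d$ in $\gamma$). For extremality you begin precisely as the paper does, splitting a representing measure $\gamma$ into the conditional laws $\gamma(\cdot\,|\,A)$ and $\gamma(\cdot\,|\,A^c)$ along a Borel set $A$, but you close the argument differently — and in fact more carefully. The paper argues by contradiction: if $0<\gamma(A)<1$ for some $A$, the fact that $\gamma(\cdot\,|\,A)\neq\gamma(\cdot\,|\,A^c)$ is said to contradict extremality, whence $\gamma$ is $\{0,1\}$-valued and therefore a Dirac mass. That step is terse: extremality only forces $\Theta_d(\gamma(\cdot\,|\,A))=\Theta_d(\gamma(\cdot\,|\,A^c))=\mu$, and $\Theta_d$ is not injective for fixed $d$, so distinctness of the two conditional laws alone is not an immediate contradiction; the paper also implicitly uses that a $\{0,1\}$-valued Borel measure on $\mathfrak{H}$ is a Dirac measure. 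You instead turn the consequence $\Theta_d(\gamma(\cdot\,|\,A))=\mu$ for all $A$ into the identity $\int_A h(t_1)\cdots h(t_d)\,\gamma(\mathrm{d}h)=F_\mu(\bm{t})\,\gamma(A)$ for every Borel $A$, conclude that $h\mapsto h(t_1)\cdots h(t_d)$ is $\gamma$-a.s.\ equal to $F_\mu(\bm{t})$, and then handle the $\bm{t}$-dependent null sets via a countable dense set together with right-continuity, identifying $h$ with the common one-dimensional marginal $F$ and hence $\gamma=\delta_F$. This buys a fully rigorous conclusion that needs neither injectivity of $\Theta_d$ nor any topological input on $\mathfrak{H}$, at the modest cost of the null-set bookkeeping; it effectively supplies the details the paper's proof leaves to the reader.
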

\begin{proof}
The convexity of $\mathfrak{M}_{\ast}$ is an immediate transfer from the (obvious) convexity of $M_+^1(\mathfrak{H})$ under the mapping $\Theta_d$, as the reader can readily check herself. That product measures are extremal is also obvious. Finally, consider an extremal element $\mu \in \mathfrak{M}_{\ast}$. Since $\mu$ is conditionally iid, there is a probability measure $\gamma \in M_+^{1}(\mathfrak{H})$ such that $\mu\big((-\bm{\infty},\bm{x}]\big) = \int_{\mathfrak{H}} h(x_1)\,\cdots\,h(x_d)\,\gamma(\mathrm{d}h)$. We choose a Borel set $A \in \mathfrak{H}$ with $\gamma(A)>0$. If $\gamma(A)=1$ is the only possible choice, $\gamma$ is actually a Dirac measure at some element $h \in \mathfrak{H}$ and $\mu$ is a product measure, as claimed. Let us derive a contradiction otherwise, in which case $\gamma = \gamma(A)\,\gamma(.\,|\,A)+\gamma(A^c)\,\gamma(.\,|\,A^c)$ and both $\gamma(.\,|\,A)$ and $\gamma(.\,|\,A^c)$ are elements of $M_+^1(\mathfrak{H})$. We obtain a convex combination of $\mu$, to wit
\begin{align*}
 \mu((-\bm{\infty},\bm{x}]) &= \gamma(A)\,\int_{\mathfrak{H}} h(x_1)\,\cdots\,h(x_d)\,\gamma(\mathrm{d}h\,|\,A)\\
& \qquad +(1-\gamma(A))\,\int_{\mathfrak{H}} h(x_1)\,\cdots\,h(x_d)\,\gamma(\mathrm{d}h\,|\,A^c).
\end{align*}
Since $\mu$ is extremal and $\gamma(.\,|\,A)$ and $\gamma(.\,|\,A^c)$ are different by definition, we obtain the desired contradiction.
\end{proof}
For the sake of completeness, the following remark gives two equivalent conditions for exchangeability of an infinite sequence of random variables.
\begin{remark}[Conditions equivalent to infinite exchangeability]
A result due to \cite{ryll57} states that an infinite sequence $\{X_k\}_{k \in \IN}$ of random variables is exchangeable (or, equivalently, conditionally iid by de Finetti's Theorem) if and only if the law of the infinite sequence $\{X_{n_k}\}_{k \in \IN}$ is invariant with respect to the choice of (increasing) subsequence $\{n_k\}_{k \in \IN} \subset \IN$. Another equivalent condition to exchangeability is that $\{X_k\}_{k \in \IN} \stackrel{d}{=}\{X_{\tau+k}\}_{k \in \IN}$ for an arbitrary finite stopping time $\tau$ with respect to the filtration $\F_n:=\sigma(X_1,\ldots,X_n)$, $n \in \IN$, see \cite{kallenberg82}. 
\end{remark}

\subsection{A general (abstract) solution to Problem \ref{motivatingproblem}}
\cite{konstantopoulos19} solve Problem \ref{motivatingproblem} on an abstract level for the whole family $\mathfrak{M}=M_+^1(\IR^d)$ of all probability laws on $\IR^d$. Their result is formulated in the next theorem in our notation\footnote{In addition to Theorem \ref{thm_probgen}, \cite{konstantopoulos19} even consider more abstract spaces than $\IR$, and also provide a necessary and sufficient criterion for finite extendibility of the law of $\bm{X}=(X_1,\ldots,X_d)$ to an exchangeable law on $\IR^n$ for $n>d$ arbitrary.}.
\begin{theorem}[General solution to Problem \ref{motivatingproblem}]\label{thm_probgen}
The law of $\bm{X}=(X_1,\ldots,X_d)$ is conditionally iid if and only if
\begin{gather*}
\sup_{g \neq 0}\Big\{ \frac{|\IE[g(\bm{X})]|}{\sup\limits_{\bm{Y}}|\IE[g(\bm{Y})]|}\Big\} \leq 1,
\end{gather*}
where the outer supremum is taken over all (non-zero) bounded, measurable functions $g:\IR^d \rightarrow \IR$, and the inner supremum in the denominator is taken over all random vectors $\bm{Y}=(Y_1,\ldots,Y_d)$ with iid components.
\end{theorem}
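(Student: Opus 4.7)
The plan is to identify the stated inequality with the assertion that $\mu_X$, the law of $\bm{X}$, lies in the weakly closed convex hull $K$ of the product probability measures on $\IR^d$, and to match $K$ with $\mathfrak{M}_\ast$. The ``only if'' direction is quick: if $\bm{X}$ is conditionally iid with mixing law $\gamma\in M_+^1(\mathfrak{H})$ and $\bm{Y}^h$ denotes a random vector with iid components of distribution function $h$, then Definition \ref{def_condiid} combined with Fubini gives $\IE[g(\bm{X})]=\int_{\mathfrak{H}}\IE[g(\bm{Y}^h)]\,\gamma(\mathrm{d}h)$ for every bounded measurable $g$, and consequently
\[
|\IE[g(\bm{X})]|\leq\int_{\mathfrak{H}}|\IE[g(\bm{Y}^h)]|\,\gamma(\mathrm{d}h)\leq\sup_{\bm{Y}}|\IE[g(\bm{Y})]|,
\]
which is the desired ratio bound.

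For the ``if'' direction I would proceed in two ingredients. First, I would show that $K=\mathfrak{M}_\ast$: the inclusion $K\subseteq\mathfrak{M}_\ast$ follows immediately from convexity of $\mathfrak{M}_\ast$ (Lemma \ref{lemma_convex}) and its closure under weak convergence (Lemma \ref{lemma_closureciid}), whereas the reverse inclusion is obtained by approximating any conditionally iid law $\int\mu_h^{\otimes d}\,\gamma(\mathrm{d}h)$ weakly by finite convex combinations $\sum_i\alpha_i\,\mu_{h_i}^{\otimes d}$ produced by discretizing $\gamma$.

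Second, I would show that the hypothesis forces $\mu_X\in K$. Hahn--Banach separation in the locally convex topology on finite signed Borel measures on $\IR^d$ generated by bounded continuous test functions asserts that $\mu_X\in K$ iff $\inf_{\bm{Y}}\IE[g(\bm{Y})]\leq\IE[g(\bm{X})]\leq\sup_{\bm{Y}}\IE[g(\bm{Y})]$ for every bounded continuous $g:\IR^d\to\IR$. To extract this two-sided bound from the hypothesis, fix such a $g$, put $\alpha:=\inf_{\bm{Y}}\IE[g(\bm{Y})]$ and $\beta:=\sup_{\bm{Y}}\IE[g(\bm{Y})]$, and apply the hypothesis to the shifted function $\tilde g:=g-(\alpha+\beta)/2$; using that $\mu_X$ and each iid law $\mu_Y$ are probability measures, one finds
\[
\Big|\IE[g(\bm{X})]-\tfrac{\alpha+\beta}{2}\Big|\leq\sup_{\bm{Y}}\Big|\IE[g(\bm{Y})]-\tfrac{\alpha+\beta}{2}\Big|=\tfrac{\beta-\alpha}{2},
\]
which rearranges precisely to $\alpha\leq\IE[g(\bm{X})]\leq\beta$.

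The main obstacle is the separation step: one must confirm that $K$ is genuinely closed in the chosen locally convex topology and that the separating functionals are represented by bounded continuous (hence bounded measurable) $g$. Since $\IR^d$ is Polish, weak convergence on $M_+^1(\IR^d)$ is metrizable and the bipolar theorem applies cleanly, so this is standard but needs a careful set-up. A secondary subtlety is the discretization in the first ingredient, which is justified by the weak continuity of the map $\Theta_d:M_+^1(\mathfrak{H})\to M_+^1(\IR^d)$ recalled in Section \ref{sec:canonic} together with the density of finitely supported measures in $M_+^1(\mathfrak{H})$ under the L\'evy-metric topology.
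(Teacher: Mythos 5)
Your proposal is correct in substance, but it is worth noting how it relates to what the paper actually does. For the ``only if'' half your argument is the same as the paper's: the paper conditions on the canonical latent factor $H$ and bounds $|\IE[g(\bm{X})]|$ by a supremum over distribution functions, which is exactly the estimate you get from the mixture representation plus Fubini (strictly speaking, Definition \ref{def_condiid} gives the mixture identity only for orthant indicators, so you need a monotone-class step and measurability of $h\mapsto\IE[g(\bm{Y}^h)]$ to pass to all bounded measurable $g$; the paper's conditioning argument sidesteps this). For the ``if'' half the paper gives no proof at all: it calls this the difficult part and cites \cite[Theorem 5.1]{konstantopoulos19}, offering only the intuition that the conditionally iid laws form a closed convex set with extremal boundary the iid laws, to be separated via Hahn--Banach. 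Your proposal turns precisely this intuition into a self-contained argument using the paper's own Lemmas \ref{lemma_convex} and \ref{lemma_closureciid}, and it works: $K\subseteq\mathfrak{M}_{\ast}$ follows from convexity, sequential closedness and metrizability of weak convergence on $M_+^1(\IR^d)$, and this inclusion is all you need --- your discretization argument for $\mathfrak{M}_{\ast}\subseteq K$ is dispensable, since the converse inclusion is never used in the proof of the theorem. The separation step is legitimate because $M_+^1(\IR^d)$ is $\sigma(M,C_b)$-closed in the space of finite signed measures (the constraints $\int 1\,\mathrm{d}\mu=1$ and $\int f\,\mathrm{d}\mu\geq 0$ for $0\leq f\in C_b$ are closed), hence $K$ is closed there, the dual of this locally convex space is $C_b(\IR^d)$, and a point outside $K$ is strictly separated by some bounded continuous $g$, contradicting your shifted-function bound; bounded continuous $g$ are admissible in the hypothesis, so no continuity-to-measurability extension is needed in this direction. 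What your route buys is a complete proof of the hard direction within the survey's toolbox; what the paper's citation buys is the stronger results of \cite{konstantopoulos19} (finite extendibility, more general state spaces).

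Two small points to tighten. First, the ratio in the statement can degenerate: $\sup_{\bm{Y}}|\IE[g(\bm{Y})]|=0$ is possible for nonzero $g$ (e.g.\ $g(x_1,x_2)=1_{\{x_1<x_2\}}-1_{\{x_1>x_2\}}$ vanishes in expectation for every iid pair), so read the hypothesis as the inequality $|\IE[g(\bm{X})]|\leq\sup_{\bm{Y}}|\IE[g(\bm{Y})]|$ for all bounded measurable $g$; under this reading your shifting argument also covers the case $\alpha=\beta$ without further ado. Second, exclude the trivial case of constant $g$ (where $\tilde g=0$) explicitly, since the supremum in the statement is taken over nonzero $g$.
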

\begin{proof}
The proof of sufficiency is the difficult part, relying on functional analytic methods, and we refer the interested reader to \cite[Theorem 5.1]{konstantopoulos19}, but provide some intuition below. Necessity of the condition in Theorem \ref{thm_probgen} is the easy part, as will briefly be explained. Without loss of generality we may assume that $\bm{X}$ is represented by (\ref{canonical_construction}) with some stochastic process $H \in M_+^1(\mathfrak{H})$ and an independent sequence of iid variates $U_1,\ldots,U_d$ uniformly distributed on $[0,1]$. For arbitrary bounded and measurable $g$ we observe
\begin{align*}
|\IE[g(\bm{X})]| &= |\IE[\IE[g(\bm{X})\,|\,H]]| = |\IE[g(H^{-1}_{U_1},\ldots,H^{-1}_{U_d})]| \\
& \leq \sup_{G(.) \in \mathfrak{H}}\big|\IE\big[g \big(G^{-1}({U_1}),\ldots,G^{-1}({U_d})\big)\big]\big|=\sup_{\bm{Y}}|\IE[g(\bm{Y})]|.
\end{align*}
\end{proof}

Regarding the intuition of the sufficiency of the condition in Theorem \ref{thm_probgen}, we provide one demonstrating example. With $X$ standard normal, we have already seen in Example \ref{ex_Normal} that the random vector $\bm{X}=(X,-X)$ is not conditionally iid, since it is bivariate normal with negative correlation coefficient. So how does this random vector violate the condition? Considering the bounded measurable function $g(x_1,x_2)=1_{\{x_1<0<x_2\}}$, we readily observe that $\IE[g(\bm{X})]=\IP(X<0)=1/2$. If $\bm{Y}=(Y_1,Y_2)$ is an arbitrary vector with iid components, we observe that 
\begin{align*}
\IE[g(\bm{Y})]&=\underbrace{\IP(Y_1<0)}_{\leq \IP(Y_1 \leq 0)}\,\underbrace{\IP(Y_2>0)}_{=\IP(Y_1>0)} \leq \IP(Y_1 \leq 0)\,\big( 1-\IP(Y_1\leq 0)\big)\\
& \leq \sup_{p \in [0,1]}\{p\,(1-p)\}=1/4.
\end{align*}
Consequently, the supremum over all such $\bm{Y}$ is bounded from above by $1/4$, hence the supremum over all $g$ in the condition of Theorem \ref{thm_probgen} is at least two, hence larger than one. The intuition behind this counterexample is that we have found one particular bounded measurable $g$ that addresses a distributional property of $\bm{X}$ that sets it apart from any iid sequence. Indeed, the proof of \cite{konstantopoulos19} relies on the Hahn-Banach Theorem and thus on a separation argument, since the set of conditionally iid laws can be viewed as a closed convex subset of $M_+^1(\IR^d)$ with extremal boundary comprising the laws with iid components, see Lemma \ref{lemma_convex}.
\par
On the one hand, Theorem \ref{thm_probgen} is clearly a milestone with regards to the present survey as it solves Problem \ref{motivatingproblem} in the general case. On the other hand, it is difficult to apply the derived condition in particular cases of Problem \ref{motivatingproblem}, when the family $\mathfrak{M}$ is some (semi-)parametric family of interest - simply because the involved suprema are hard to evaluate, see also Example \ref{ex_NP} below. On a high level, Theorem \ref{thm_probgen} solves Problem \ref{motivatingproblem} but not the refined Problem \ref{motivatingproblemrefined}, which depends on an additional dimension-independent property (P). However, the most compelling results of the theory deal precisely with certain dimension-independent properties (P) of interest, see the upcoming sections as well as paragraph \ref{subsec_effectext} for a further discussion. This is because the additional structure provided by some property (P) and the search for structure-preserving extensions is in many cases a more natural and more interesting problem than to simply find \emph{some} extension. We will see that the algebraic structure of this problem is highly case-specific in general, i.e.\ heavily dependent on (P). 
\par
The following example shows that the supremum condition of Theorem \ref{thm_probgen} can lead to an NP-hard problem in general.
\begin{example}[In general, the extendibility problem is difficult]\label{ex_NP}
If $\bm{X}=(X_1,X_2)$ is a random vector taking values in $\{x_1,\ldots,x_n\}^2 \subset \IR^2$, its joint probability distribution is fully described in terms of the matrix $A \in [0,1]^{n \times n}$ defined via $A_{ij}:=\IP(X_1=x_i,X_2=x_j)$, $1 \leq i,j \leq n$. The probability law of $\bm{X}$ is exchangeable if and only if $A=A^T$, and the law of $\bm{X}$ is conditionally iid if and only if\footnote{We denote by $S_m:=\{\bm{y} \in [0,1]^m\,:\,\norm{\bm{y}}_1=1\}$ the $m$-dimensional unit simplex.} there are (row vectors) $\bm{\lambda} \in S_m$ and $\bm{x}_1,\ldots,\bm{x}_m \in S_n$ such that $A=\sum_{i=1}^m \lambda_i\,\bm{x}_i^T\,\bm{x}_i$. Up to normalization, which is only due to the fact that we deal with a probabilistic interpretation, this property is called \emph{complete positivity}. A completely positive matrix $A$ is necessarily also \emph{doubly non-negative}, meaning that it is symmetric, element-wise non-negative and positive semi-definite, and its elements sum up to one. The set of completely positive matrices is a proper subset of doubly non-negative matrices in dimensions $d \geq 5$, and to decide for a given matrix $A$ whether or not it is completely positive is known to be NP-hard, see \cite{dickinson14}. Theorem \ref{thm_probgen} implies that $\bm{X}$, given in terms of $A$, is conditionally iid if and only if
\begin{gather*}
\sup_{G \in \IR^{n \times n}\setminus \{\bm{0}\}}\Big\{ \frac{\big|\sum_{i,j=1}^{n}G_{ij}\,A_{ij}\big|}{\sup_{\bm{y} \in S_n}|\bm{y}^T\,G\,\bm{y}|}\Big\} \leq 1.
\end{gather*}
Notice that the denominator is equal to the absolute value of the maximal eigenvalue of $G$, the so-called spectral radius of $G$. As outlined before, this optimization problem must be NP-hard, unless P=NP. 
\end{example}

\newpage

\section{Binary sequences} \label{sec:binary}

We study probability laws on $\{0,1\}^d$, i.e.\ on the set of finite binary sequences. We start with a short digression on the little moment problem, because it occupies a commanding role, not only in this section but also in Section \ref{sec:lom} below. For a further discussion between the little moment problem and de Finetti's Theorem, the interested reader is also referred to \cite{daboni82}.

\subsection{Hausdorff's moment problem}\label{subsec_hausdorff}
If $(b_0,\ldots,b_d)$ is a finite sequence of real numbers, we write $\nabla b_k = b_k-b_{k+1}$ for $k=0,\ldots,d-1$. The (reversed) difference operator $\nabla$ may be iterated, yielding $\nabla^2 b_k = \nabla(\nabla b_k)=\nabla b_k -\nabla b_{k+1}$ for $k=0,\ldots,d-2$, and so on. In general we obtain the formula
\begin{gather*}
\nabla^j b_k := \sum_{i=0}^{j}(-1)^{i}\binom{j}{i}\,b_{k+i},\quad 0 \leq  j+k \leq d,
\end{gather*}
with $\nabla=\nabla^1$ and $\nabla^0$ the identity.

\begin{definition}[$d$-monotone sequences]
For $d \in \IN$, we say that a finite sequence $(b_0,b_1,\ldots,b_{d}) \in [0,\infty)^{d+1}$ is \emph{$d$-monotone} if $\nabla^{d-k}b_k \geq 0$ for $k=0,1,\ldots,d$. An infinite sequence $\{b_k\}_{k \in \IN_0}$ with positive members is said to be \emph{completely monotone} if $(b_0,\ldots,b_{d})$ is $d$-monotone for each $d \geq 2$.
\end{definition}

If $(b_0,\ldots,b_{d})$ is $d$-monotone, then $\nabla^j b_k \geq 0$ for all $0 \leq j+k \leq d$. In particular, if for $d \geq 2$ the sequence $(b_0,\ldots,b_{d})$ is $d$-monotone, then the shorter sequences $(b_0,\ldots,b_{d-1})$ and $(b_1,\ldots,b_{d})$ are both $(d-1)$-monotone. Intuitively, when viewing $(b_0,\ldots,b_{d})$ as a function $\{0,\ldots,d\} \rightarrow [0,\infty)$, then $(-1)^j\,\nabla^{j}b_k$ is something like the $j$-th derivative at $k$. With this interpretation in mind, $d$-monotonicity means that the higher-order derivatives alternate in sign, i.e.\ first derivative is non-positive, second derivative is non-negative, third derivative is non-positive, and so on. For instance, a $2$-monotone sequence is non-increasing $(b_k \geq b_{k+1})$ and ``convex'' ($b_{k+1}$ is smaller or equal than the arithmetic mean of its neighbors $b_k$ and $b_{k+2}$). The set of all $d$-monotone sequences starting with $b_0=1$ will be denoted by $\mathcal{M}_d$ in the sequel. Similarly, $\mathcal{M}_{\infty}$ denotes the set of completely monotone sequences starting with $b_0=1$. 
\par
Finite sequences in $\mathcal{M}_{d}$ arise quite naturally in the context of certain discrete probability laws, as will briefly be explained. Consider a probability distribution on the power set (including the empty set) of $\{1,\ldots,d\}$ with the property that subsets with the same cardinality are equally likely outcomes. Concretely, the probability of some subset $I \subset \{1,\ldots,d\}$ only depends on the cardinality $|I|$ of $I$, and there are only $d+1$ possible cardinalities. Denote the probability of a subset with cardinality $k$ by $p_k$, $k=0,\ldots,d$. Then $p_0,\ldots,p_d$ are non-negative numbers satisfying
\begin{gather}
\sum_{k=0}^{d}\binom{d}{k}\,p_k = \sum_{I \subset \{1,\ldots,d\}}p_{|I|} = 1.
\label{posnumconddmon}
\end{gather}
Defining the sequence 
\begin{gather}
b_k:=\sum_{i=0}^{d-k}\binom{d-k}{i}p_{d-i},\quad k=0,\ldots,d,
\label{constr_dmon_frompos}
\end{gather}
it follows that $\nabla^{d-k}b_k = p_k \geq 0$ for $k=0,\ldots,d$. In particular, $b_0=1$, so $(b_0,\ldots,b_d) \in \mathcal{M}_d$. Furthermore, the construction (\ref{constr_dmon_frompos}) can be inverted, i.e.\ is general enough to construct all elements of $\mathcal{M}_d$. To wit, if $(b_0,\ldots,b_d)$ is an arbitrary element in $\mathcal{M}_d$, then the vector of non-negative numbers $(p_0,\ldots,p_d)=(\nabla^d b_0,\nabla^{d-1}b_1,\ldots,\nabla^{0}b_d)$ satisfies (\ref{posnumconddmon}), i.e.\ defines a probability law on the power set of $\{1,\ldots,d\}$ with the aforementioned property. Thus, these probability laws on the power set of $\{1,\ldots,d\}$ and $\mathcal{M}_d$ stand in a one-to-one correspondence. Of course, the power set of $\{1,\ldots,d\}$ can naturally be identified with $\{0,1\}^d$, when identifying $\bm{x} \in \{0,1\}^d$ with the subset $I=\{k\,:\,x_k=1\}$. This explains the occurrence of $d$-monotonicity in the present section. 
\par
The so-called \emph{Hausdorff moment problem} (also known as \emph{little moment problem}) states that the sequences $\mathcal{M}_{\infty}$ stand in one-to-one correspondence with the moment sequences of random variables taking values on the unit interval $[0,1]$. Concretely, the sequence $\{b_k\}_{k \in \IN_0}$ with $b_0=1$ is completely monotone if and only if there is a random variable $M$ taking values in $[0,1]$ such that $b_k=\IE[M^k]$, $k \in \IN_0$. Furthermore, the sequence $\{b_k\}_{k \in \IN_0}$ uniquely determines the probability law of $M$. This result is originally due to \cite{hausdorff21,hausdorff23}. See also \cite[p.\ 225]{feller66} for a proof. Uniqueness of the probability law of $M$ relies heavily on the boundedness of the interval $[0,1]$ and is due to the fact that polynomials are dense in the space of continuous functions on a bounded interval (Stone-Weierstrass).
\par
It is important to observe that not every $d$-monotone sequence can be extended to a completely monotone sequence. Being given a $d$-monotone sequence $(b_0,\ldots,b_{d})$, to check whether there exists an extension $b_{d+1},b_{d+2},\ldots$ to an infinite completely monotone sequence $\{b_k\}_{k \in \IN_0}$ is a purely analytical, highly non-trivial problem, and luckily already solved. This problem is known as the \emph{truncated Hausdorff moment problem}. Its solution, due to \cite{karlin53}, states that $(b_0,\ldots,b_{d})$ with $b_0=1$ can be extended to an element in $\mathcal{M}_{\infty}$ if and only if the \emph{Hankel determinants} $\hat{H}_1,\,\check{H}_1,\ldots,\hat{H}_{d-1},\,\check{H}_{d-1}$ are all non-negative, which are defined as
\begin{align}
\hat{H}_{2\,\ell}&:=\mbox{det}\left[ \begin{array}{ccc}
b_0 & \dots & b_{\ell}\\
\vdots & & \vdots\\
b_{\ell} & \dots & b_{2\,\ell}\\
\end{array}\right],\quad \check{H}_{2\,\ell}:=\mbox{det}\left[ \begin{array}{ccc}
\nabla b_1 & \dots & \nabla b_{\ell}\\
\vdots & & \vdots\\
\nabla b_{\ell} & \dots & \nabla b_{2\,\ell-1}\\
\end{array}\right],\nonumber\\
\hat{H}_{2\,\ell+1}&:=\mbox{det}\left[ \begin{array}{ccc}
b_1 & \dots & b_{\ell+1}\\
\vdots & & \vdots\\
b_{\ell+1} & \dots & b_{2\,\ell+1}\\
\end{array}\right],\quad \check{H}_{2\,\ell+1}:=\mbox{det}\left[ \begin{array}{ccc}
\nabla b_0 & \dots & \nabla b_{\ell}\\
\vdots & & \vdots\\
\nabla b_{\ell} & \dots & \nabla b_{2\,\ell}\\
\end{array}\right],
\label{hankeldet}
\end{align}
for all $\ell \in \IN_0$ with $2\,\ell \leq d$, respectively $2\,\ell+1 \leq d$. To provide an example, the sequence $(1,1/2,\epsilon)$ is $2$-monotone for all $\epsilon \in [0,1/2]$, but can only be extended to a completely monotone sequence if $\epsilon \in [1/4,1/2]$.

\subsection{Extendibility of exchangeable binary sequences}
Actually, before Bruno de Finetti published his seminal Theorem \ref{thm_definetti} in 1937, he first published in \cite{definetti31} the same result for the simpler case of binary sequences. In fact, he showed that there is a one-to-one correspondence between exchangeable probability laws on infinite binary sequences and the set $M_+^1([0,1])$ of probability laws on $[0,1]$. 
\par
We start with a random vector $\bm{X}=(X_1,\ldots,X_d)$ taking values in $\{0,1\}^d$. We know from Lemma \ref{lemma_exchangeable} that $\bm{X}$ needs to be exchangeable in order to possibly be conditionally iid, so we concentrate on the exchangeable case. Let $\bm{1}_m$, $\bm{0}_m$ denote $m$-dimensional row vectors with all entries equal to one and zero, respectively, and define 
\begin{gather*}
p_k:=\IP\big(\bm{X}=(\bm{1}_k,\bm{0}_{d-k}) \big),\quad k =0,\ldots,d.
\end{gather*}
Exchangeability implies that $\IP(\bm{X}=\bm{x})=p_{\norm{\bm{x}}_1}$ for arbitrary $\bm{x} \in \{0,1\}^d$. Consequently, the probability law of $\bm{X}$ is fully determined by $p_0,\ldots,p_d$. 

\begin{theorem}[Extendibility of exchangeable binary sequences]\label{thm_definetti01}
Let $\bm{X}$ be an exchangeable random vector taking values in $\{0,1\}^d$. We denote
\begin{gather*}
p_k:=\IP\big(\bm{X}=(\bm{1}_k,\bm{0}_{d-k}) \big),\quad k =0,\ldots,d.
\end{gather*}
The following statements are equivalent:
\begin{itemize}
\item[(a)] $\bm{X}$ is conditionally iid.
\item[(b)] There is a random variable $M$ taking values in $[0,1]$ such that
\begin{gather*}
p_k = \nabla^{d-k} b_{k},\quad k=0,\ldots,d,
\end{gather*}
where $b_k:=\IE[M^k]$ for $k =0,\ldots,d$. 
\item[(c)] The Hankel determinants in (\ref{hankeldet}) are all non-negative, for all $\ell \in \IN_0$ with $2\,\ell \leq d$, respectively $2\,\ell+1 \leq d$, where
\begin{gather*}
b_k:=\sum_{i=0}^{d-k}\binom{d-k}{i}\,p_{d-i},\quad k=0,\ldots,d.
\end{gather*}
\end{itemize}
If one (hence all) of these conditions are satisfied, and $\bm{U}=(U_1,\ldots,U_d)$ is an iid sequence of random variables that are uniformly distributed on $[0,1]$, independent of $M$ in part (b), then
\begin{gather*}
\bm{X} \stackrel{d}{=} ( 1_{\{U_1 \leq M\}},\ldots,1_{\{U_d \leq M\}}).
\end{gather*} 
\end{theorem}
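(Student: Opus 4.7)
The plan is to split the proof into (a)$\Leftrightarrow$(b), which is purely probabilistic and follows from the definition of conditionally iid once the binary support is exploited, and (b)$\Leftrightarrow$(c), which is a direct application of the truncated Hausdorff moment problem recalled in Section \ref{subsec_hausdorff}. The explicit representation at the end comes out as a by-product of the (b)$\Rightarrow$(a) direction.

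For (a)$\Rightarrow$(b), I would start from Definition \ref{def_condiid}: conditionally iid gives $\gamma \in M_+^1(\mathfrak{H})$ with the mixture identity. Since $\bm{X}$ is supported on $\{0,1\}^d$, the one-dimensional marginal of $X_1$ is supported on $\{0,1\}$, which forces $\gamma$ to be concentrated on the Bernoulli distribution functions of the form $h(t) = (1-m)\,1_{\{t \geq 0\}} + m\,1_{\{t \geq 1\}}$ for some $m = m(h) \in [0,1]$. Letting $M$ be the pushforward of $\gamma$ under $h \mapsto 1 - h(0)$ and applying the mixture identity to $\bm{x} = (\bm{1}_k, \bm{0}_{d-k})$ yields
\[
p_k \,=\, \IE\!\left[M^k (1-M)^{d-k}\right] \,=\, \sum_{i=0}^{d-k}(-1)^i \binom{d-k}{i}\, b_{k+i} \,=\, \nabla^{d-k} b_k,
\]
with $b_k := \IE[M^k]$. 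Conversely, (b)$\Rightarrow$(a) is constructive: given $M$, let $U_1, \ldots, U_d$ be iid uniform on $[0,1]$ independent of $M$ and set $\tilde{X}_k := 1_{\{U_k \leq M\}}$. Conditional on $M$, the $\tilde{X}_k$ are iid Bernoulli$(M)$, hence the law of $\tilde{\bm{X}}$ is conditionally iid with underlying random distribution function $H_t = (1-M)\,1_{\{t \geq 0\}} + M\,1_{\{t \geq 1\}}$. A direct computation shows $\IP(\tilde{\bm{X}} = (\bm{1}_k, \bm{0}_{d-k})) = \IE[M^k (1-M)^{d-k}] = \nabla^{d-k} b_k = p_k$, and since both $\bm{X}$ and $\tilde{\bm{X}}$ are exchangeable with identical cell probabilities, their laws coincide; this also establishes the claimed stochastic representation.

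For (b)$\Leftrightarrow$(c), the equivalence is the content of the truncated Hausdorff moment problem as recalled in Section \ref{subsec_hausdorff}: the non-negativity of the Hankel determinants in (\ref{hankeldet}) is necessary and sufficient for $(b_0, \ldots, b_d)$ to extend to the moment sequence of some random variable $M$ taking values in $[0,1]$, so I would simply invoke Karlin's result. That the two definitions of $b_k$ (the one through $p_k = \nabla^{d-k}b_k$ in (b) and the one through the inversion formula in (c)) are compatible was verified in (\ref{constr_dmon_frompos}), where it was shown that these two linear transformations are mutual inverses on $\mathcal{M}_d$. The main obstacle is really the analytical content absorbed into the truncated Hausdorff moment problem; the rest is a translation between binary exchangeable laws and binomial mixing moments, which is transparent once one recognises that the only relevant distribution functions in $\mathfrak{H}$ are the Bernoulli ones, hence parametrised by a single $[0,1]$-valued random variable.
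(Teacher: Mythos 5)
Your proposal is correct, and it differs from the paper's argument in the one direction that matters, namely (a)$\Rightarrow$(b). The paper does not work with the abstract mixing measure of Definition \ref{def_condiid} directly: it invokes de Finetti's Theorem \ref{thm_definetti} to view $\bm{X}$ as the first $d$ members of an infinite exchangeable binary sequence, obtains the random distribution function $H$ as the almost sure limit of empirical distribution functions (Lemma \ref{lemma_condGC}), and then reads off that $H$ can only jump at $0$ and $1$, setting $M:=1-H_{1/2}$. You instead stay entirely at the finite, fixed-$d$ level: from the mixture identity you argue that $\gamma$ must concentrate on Bernoulli distribution functions and push $\gamma$ forward to get $M$. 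This is a genuinely more elementary and self-contained route (no infinite extension, no reversed-martingale machinery), at the cost of having to justify the one claim you state without proof, that the binary support ``forces'' $\gamma$ onto Bernoulli distribution functions; this is true but deserves a line: for $x<0$ one has $\IE_\gamma[h(x)]=\IP(X_1\leq x)=0$ with $h(x)\geq 0$, for $x\geq 1$ one has $\IE_\gamma[1-h(x)]=0$, and for $0\leq s<t<1$ one has $\IE_\gamma[h(t)-h(s)]=\IP(s<X_1\leq t)=0$ with $h(t)-h(s)\geq 0$, so each of these holds $\gamma$-almost surely; running over a countable dense set of arguments and using right-continuity and monotonicity of $h$ gives that $\gamma$-almost every $h$ equals $(1-m)\,1_{\{t\geq 0\}}+m\,1_{\{t\geq 1\}}$ with $m=1-h(0)$. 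A similarly small point is that $p_k$ is a point probability while the mixture identity is stated for lower orthants, so one should note that the orthant identity identifies the law of $\bm{X}$ with the $\gamma$-mixture of $d$-fold products of Bernoulli laws (or apply inclusion--exclusion), after which $p_k=\IE[M^k(1-M)^{d-k}]=\nabla^{d-k}b_k$ follows as you wrote. The remaining pieces coincide with the paper: (b)$\Rightarrow$(a) via the explicit construction $1_{\{U_k\leq M\}}$ together with exchangeability to match cell probabilities, and (b)$\Leftrightarrow$(c) by the truncated Hausdorff moment problem of Karlin--Shapley combined with the inversion relation between (\ref{constr_dmon_frompos}) and $p_k=\nabla^{d-k}b_k$, exactly as in Section \ref{subsec_hausdorff}; the paper treats this part equally briefly. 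What the paper's de Finetti route buys is that it exhibits $M$ intrinsically as a tail functional of an infinite extension (consistent with Lemma \ref{lemma_condGC}); what your route buys is independence from Theorem \ref{thm_definetti}, so the binary case is established from first principles, mirroring de Finetti's original 1931 treatment.
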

\begin{proof}
The equivalence of (c) and (b) relies on the truncated Hausdorff moment problem and the identities
\begin{gather*}
\IE[M^k\,(1-M)^{d-k}] = \nabla^{d-k}b_k = p_k, \quad k=0,\ldots,d,
\end{gather*}
which are all readily verified. To show that (b) implies (a) works precisely along the stochastic model with $\bm{U}$ as claimed, which is easily checked. To verify the essential part (a) $\implies$ (b) we may simply apply de Finetti's Theorem \ref{thm_definetti} in the special case of a binary sequence\footnote{Alternatively, one may construct a completely monotone sequence $\{b_k\}_{k \in \IN}$ from an infinite extension of $\bm{X}$, as demonstrated in \cite[Equation (1)]{daboni82}, and then make use of Hausdorff's moment problem to obtain $M$.}: (a) implies that we may without loss of generality assume that the given random vector equals the first $d$ members of an infinite exchangeable binary sequence $\{X_k\}_{k \in \IN}$. De Finetti's Theorem \ref{thm_definetti}, and as a corollary Lemma \ref{lemma_condGC}, give us a random variable $H \sim \gamma \in M_+^1(\mathfrak{H})$. Since each $X_k$ takes values only in $\{0,1\}$, necessarily almost every path of $H$ has only one value different from $\{0,1\}$, which is $H_{t}$ for $t \in [0,1)$. So we define $M:=1-H_{1/2}$ and observe that conditioned on $M$, the random variables $X_k$ are iid Bernoulli with success probability $M$. This implies the claim.
\end{proof}

In words, the canonical stochastic model for conditionally iid $\bm{X}$ with values in $\{0,1\}^d$ is a sequence of $d$ independent coin tosses with success probability $M$ which is identical for all coin tosses, but simulated once before the first coin toss. We end this section with two examples of particular interest.

\begin{example}[P\'olya's urn]
Let $r \in \IN$ and $b \in \IN$ denote the numbers of red and blue balls in an urn. Define a random vector $\bm{X} \in \{0,1\}^d$ as follows:
\begin{itemize}
\item[(i)] Set $k:=1$.
\item[(ii)] Draw a ball at random from the urn.
\item[(iii)] Set $X_k:=1$ if the ball is red, and $X_k:=0$ otherwise.
\item[(iv)] Put the ball back into the urn with $1$ additional ball of the same color. 
\item[(v)] Increment $k:=k+1$.
\item[(vi)] If $k=d+1$, stop, otherwise go to step (ii).
\end{itemize}
It is not difficult to observe that $\bm{X}$ is exchangeable, since
\begin{gather*}
\IP(\bm{X}=\bm{x}) = \frac{\prod_{k=0}^{\norm{\bm{x}}_1-1}(r+k)\,\prod_{k=0}^{d-\norm{\bm{x}}_1-1}(b+k)}{\prod_{k=0}^{d-1}(r+b+k)},\quad \bm{x} \in \{0,1\}^d,
\end{gather*}
depends on $\bm{x}$ only through $\norm{\bm{x}}_1$. Like in Theorem \ref{thm_definetti01} we denote by $p_k$ the probability $\IP(\bm{X}=\bm{x})$ if $\norm{\bm{x}}_1=k$, $k=0,\ldots,d$. Using induction over $k=d,d-1,\ldots,0$ in order to verify ($\ast$) below and knowledge about the moments of the Beta-distribution\footnote{See, e.g., \cite[p.\ 35]{gupta04}.} in ($\ast\ast$) below, we observe that
\begin{align*}
b_k &:=\sum_{i=0}^{d-k}\binom{d-k}{i}\,p_{d-i} = \sum_{i=0}^{d-k}\binom{d-k}{i}\,\frac{(r+b-1)!\,(r+d-i-1)!\,(b+i-1)!}{(r-1)!\,(b-1)!\,(r+b+d-1)!}\\
& \stackrel{(\ast)}{=} \frac{(r+k-1)!\,(r+b-1)!}{(r-1)!\,(b+r+k-1)!} = \frac{\Gamma(r+k)\,\Gamma(r+b)}{\Gamma(r)\,\Gamma(r+b+k)}\stackrel{(\ast\ast)}{=}\IE[M^k],
\end{align*}
where $M$ is a random variable with Beta-distribution whose density is given by
\begin{gather*}
f_{M}(x) = \frac{\Gamma(r+b)}{\Gamma(r)\,\Gamma(b)}\,x^{r-1}\,(1-x)^{b-1},\quad 0<x<1.
\end{gather*}
Thus, the probability law of $\bm{X}$ has a conditionally iid representation like in Theorem \ref{thm_definetti01}. This is one of the traditional examples, in which the conditionally iid structure is a priori not easy to guess from the original motivation of $\bm{X}$ - in this case a simple urn replacement model. 
\end{example}

\begin{example}[Ferromagnetic Curie-Weiss Ising model]
Motivated by several models in statistical mechanics, \cite{liggett07} study random vectors which admit a density with respect to the law of a vector with iid components which is the exponential of a quadratic form. Concretely, they consider the situation
\begin{gather}
\label{liggettmodel}
\IP(\bm{X} \in \mathrm{d}\bm{x}) = \frac{1}{c_d}\,e^{\frac{1}{2}\big(\sum_{k=1}^{d}x_k\big)^2}\,\IP(\bm{Y} \in \mathrm{d}\bm{x}),
\end{gather}
where $\bm{Y}=(Y_1,\ldots,Y_d)$ is a vector with iid components and $Y_1$ is assumed to satisfy
\begin{align}
\psi(v):=\IE\Big[ e^{v\,Y_1}\Big]<\infty\mbox{ for all }v \in \IR,\quad c_d:=\IE\Big[ e^{\frac{1}{2}\big( \sum_{k=1}^{d}Y_k\big)^2}\Big]<\infty.
\label{liggett_cond}
\end{align}
Of particular interest are cases in which $Y_1$ takes only finitely many different values. Especially if $Y_1 \in \{0,1\}$, the vector $\bm{X}$ is a binary sequence like in the present section. 
\par
A prominent model motivating the investigation of \cite{liggett07} is the so-called Curie-Weiss Ising model. In probabilistic terms, this model is a probability law on $\{-1,1\}^d$ with two parameters $J,h \in \IR$, and the components of a random vector $\bm{Z}$ with this probability law models the so-called spins at $d$ different sites. These spins can either have the value $-1$ or $1$ (so $\bm{X}:=(1_{\{Z_1>0\}},\ldots,1_{\{Z_d>0\}})$ is a transformation from $\{-1,1\}^d$ to $\{0,1\}^d$). We denote for $\bm{n} \in \{-1,1\}^d$ by $N(\bm{n})$ the number of $1$'s in $\bm{n}$, so that $d-N(\bm{n})$ equals the number of $-1$'s. For $\bm{n} \in \{-1,1\}^d$ we define
\begin{gather}
\IP(\bm{Z}=\bm{n}) = \frac{e^{h\,\big(2\,N(\bm{n})-d\big)+\frac{J}{2}\,\big(2\,N(\bm{n})-d\big)^2}}{\sum_{k=0}^{d}\binom{d}{k}e^{h\,(2\,k-d)+\frac{J}{2}\,(2\,k-d)^2}},\quad \bm{n} \in \{-1,1\}^{d},
\label{CurieIsing}
\end{gather}
which is an exchangeable probability law on $\{-1,1\}^d$. The exponent of the numerator can be re-written as 
\begin{gather*}
h\,\big(2\,N(\bm{n})-d\big)+\frac{J}{2}\,\big(2\,N(\bm{n})-d\big)^2 = h\,\sum_{k=1}^{d}n_k+\frac{J}{2}\,\sum_{k=1}^{d}\sum_{i=1}^{d}n_k\,n_i
\end{gather*}
and is called the \emph{Hamilton operator} of the model. The parameter $h$ determines the external magnetic field and the parameter $J$ denotes a coupling constant. If $J \geq 0$ the model is called ferromagnetic, and for $J<0$ it is called antiferromagnetic. The ferromagnetic case arises as special case of (\ref{liggettmodel}), if $Y_1$ takes values in $\{-\sqrt{J},\sqrt{J}\}$ with respective probabilities $\IP(Y_1=\sqrt{J})=1-\IP(Y_1=-\sqrt{J})=\exp(h)/(\exp(h)+\exp(-h))$. Then the law of $\bm{Z}/\sqrt{J}$ on $\{-1,1\}^d$ is precisely given by the Curie-Weiss Ising model in (\ref{CurieIsing}) with $J \geq 0$. Notice that for the antiferromagnetic case $J<0$ this construction is impossible.
\par
\cite[Theorem 1.2]{liggett07} shows that $\bm{X}$ as defined in (\ref{liggettmodel}) is conditionally iid. More concretely, conditioned on a random variable $M$ with density\footnote{Completing the square shows that $f_{M}$ defines a proper density function on $\IR$.}
\begin{gather*}
f_{M}(v):=\frac{\psi(x)}{c_d}\,\frac{e^{-\frac{x^2}{2}}}{\sqrt{2\,\pi}},\quad x \in \IR,
\end{gather*}
the components of $\bm{X}$ are iid with common distribution
\begin{gather*}
\IP(X_k \in \mathrm{d}x\,|\,M) = \frac{e^{M\,x}}{\psi(M)}\,\IP(Y_1 \in \mathrm{d}x),\quad k=1,\ldots,d,
\end{gather*}
as can easily be checked. In particular, this shows that the aforementioned ferromagnetic Curie-Weiss Ising model is conditionally iid, a result originally due to \cite{papangelou89}. 
\end{example}

\newpage

\section{Classical results for static factor models} \label{sec:static}
Besides the seminal de Finetti's Theorem \ref{thm_definetti}, the most popular results in the theory on conditionally iid models concern latent factor processes $H$ of a very special form to be discussed in the present section. To this end, we consider a popular one-parametric family of one-dimensional distribution functions $x \mapsto F_{m}(x)$ on the real line and put a prior distribution on the parameter $m \in \IR$. Then define $H=\{H_t\}_{t \in \IR}$ in the canonical construction (\ref{canonical_construction}) by $H_t=F_{M}(t)$, where $M$ is some random variable taking values in the set of admissible values for the parameter $m$. For some prominent families, for example the zero mean normal law or the exponential law, the resulting distribution of the random vector $\bm{X}$ belongs to a prominent multivariate family of distributions $\mathfrak{M}$, and in fact defines the subset $\mathfrak{M}_{\ast} \subset \mathfrak{M}$. Of particular interest is the case when the subset $\mathfrak{M}_{\ast}$ of $\mathfrak{M}$ admits a convenient analytical description within the framework of the analytical description of the larger family $\mathfrak{M}$. By construction, in this method of generating conditionally iid laws the dependence-inducing latent factor process $H$ is fully determined already by a single random parameter $M$, so that it appears unnatural to formulate the model in terms of a ``stochastic process'' $H$ at all. Since we investigate situations in which this appears to be more natural in later sections, we purposely do this anyway in order to present all results of the present article under one common umbrella. The ``single-parameter construction'' just described can then be classified as some kind of ``static'' process within the realm of all possible processes with laws in $M_+^1(\mathfrak{H})$.
 \par
More rigorously, let $\{H_t\}_{t \geq 0}$ be the stochastic process from the canonical stochastic representation (\ref{canonical_construction}) of some multivariate law in $\mathfrak{M}_{\ast} \subset \mathfrak{M}$. Equivalently, we view this probability law as a $d$-dimensional marginal law of some infinite exchangeable sequence of random variables $\{X_k\}_{k \in \IN}$, and define $\{H_t\}_{t \geq 0}$ according to Lemma \ref{lemma_condGC} as the uniform limit of $\big\{\sum_{k=1}^{d}1_{\{X_k \leq \,t\}}/d\big\}_{t \geq 0}$ as $d \rightarrow \infty$. We call the probability law of $\bm{X}=(X_1,\ldots,X_d)$ \emph{static}, if the natural filtration generated by $\{H_t\}_{t \geq 0}$, i.e.\ $\mathcal{H}_t:=\sigma(H_s\,|\,s \leq t)$, $t \in \IR$, is trivial, meaning that there is some $T \in [-\infty,\infty)$ such that $\mathcal{H}_t = \{\emptyset,\Omega\}$ for $t \leq T$ (``zero information before $T$'') and $\mathcal{H}_t=\mathcal{H}$ for $t > T$ (``total information after $T$''). The present section reviews well-known families of distributions $\mathfrak{M}$, for which the set $\mathfrak{M}_{\ast}$ consists only of static laws. As already mentioned, this situation typically occurs when the random distribution function $H \sim \gamma \in M_+^1(\mathfrak{H})$ is itself given by $H_t=F_{M}(t)$, for a popular family $F_m$ of one-dimensional distribution functions and a single random variable $M$ representing a random parameter pick.  
\begin{example}[The multivariate normal law revisited]
It follows from Examples \ref{ex_Normal} and \ref{ex_normalcont} that $\mathcal{N}(\bm{\mu},\Sigma)_{\ast}$, the conditionally iid normal laws, are static. The random distribution function $H$ as given by (\ref{H_normal}) obviously satisfies $\mathcal{H}=\sigma(H_t\,:\,t \in \IR)=\sigma(M)=\mathcal{H}_t$ for arbitrary $t \in \IR$.
\end{example}

\begin{example}[Binary sequences revisited]
If one (hence all) of the conditions of Theorem \ref{thm_definetti01} is satisfied, the law of the binary sequence $\bm{X} \in \{0,1\}^d$ is static. Using the notation in Theorem \ref{thm_definetti01}, the random distribution function $H$ equals $H_t:=(1-M)\,1_{\{t \geq 0\}}+M\,1_{\{t \geq 1\}}$. Obviously, $\mathcal{H}=\sigma(H_t\,:\,t \in \IR)=\sigma(M)=\mathcal{H}_t$ for arbitrary $t>0$.
\end{example}

In the remaining section we treat the mixture of iid zero mean normals in paragraph \ref{subsec_sphere} and the mixture of iid exponentials in paragraph \ref{subsec_AC}, since these are the best-studied cases of the theory with nice analytical characterizations. The interested reader is also referred to \cite{diaconis87,rachev91} who additionally study mixtures of iid geometric variables, iid Poisson variables, and iid uniform variables. Mixtures of uniform random variables are discussed in more detail also in Section \ref{subsec_gnedin} below.

\subsection{Spherical laws (aka $\ell_2$-norm symmetric laws)}\label{subsec_sphere}
A random vector $\bm{X} \in \IR^d$ is called \emph{spherical} if its probability distribution remains invariant under unitary transformations, such as rotations or reflections, i.e.\ $\bm{X} \stackrel{d}{=} \bm{X}\,O$ for an arbitrary orthogonal matrix $O \in \IR^{d \times d}$. A spherical random vector $\bm{X}$ has a canonical stochastic representation
\begin{gather}
\bm{X} \stackrel{d}{=} R\,\bm{S},
\label{representation_spherical}
\end{gather}
where $R$ is a non-negative random variable and the random vector $\bm{S}$ is independent of $R$ and uniformly distributed on the Euclidean unit sphere $\{\bm{x} \in \IR^d\,:\,\norm{\bm{x}}_2=1\}$, see \cite[Chapter 2]{fang90}. Hence, realizations of spherical laws must be thought of as being the result of a two-step simulation algorithm: first draw one completely random point on the unit $d$-sphere, and then scale this point according to some one-dimensional probability distribution on the positive half-axis. In analytical terms, spherical laws are most conveniently treated via their (multivariate) characteristic functions. In particular, it is not difficult to see that $\bm{X}$ has a spherical law if and only if there exists a real-valued function $\varphi:[0,\infty) \rightarrow \IR$ in one variable such that 
\begin{gather*}
\IE\Big[e^{\mathrm{i}\,(u_1\,X_1+\ldots+u_d\,X_d)} \Big] = \varphi(\norm{\bm{u}}_2^2),\quad \bm{u}=(u_1,\ldots,u_d) \in \IR^d,
\end{gather*}
see, e.g., \cite[Lemma 4.1, p.\ 161]{mai12}. The function $\varphi$ is called the \emph{characteristic generator}. If the components of $\bm{X}$ are conditionally iid, the function $\varphi$ is of a very special form, see Schoenberg's Theorem \ref{schoenberg_thm} below.
\par
If the components of $\bm{Y}=(Y_1,\ldots,Y_d)$ are iid standard normally distributed, and $M \in (0,\infty)$ is an independent random variable, the random vector $\bm{X}=M\,\bm{Y}$ is spherical, because $\bm{Y}\,O$ is a vector of iid standard normal components for any orthogonal matrix $O$. Furthermore, the components of $\bm{X}$ are iid conditioned on the $\sigma$-algebra generated by the mixture variable $M$. Schoenberg's Theorem states that the converse is true as well, i.e.\ all conditionally iid spherical laws are mixtures of zero-mean normals.
\begin{theorem}[Schoenberg's Theorem] \label{schoenberg_thm}
Let $\mathfrak{M}$ be the family of $d$-dimensional spherical laws, and let the law of $\bm{X}$ be in $\mathfrak{M}$, and assume $\bm{X}$ is not identically equal to a vector of zeros. The following are equivalent 
\begin{itemize}
\item[(a)] The law of $\bm{X}$ lies in $\mathfrak{M}_{\ast}$. 
\item[(b)] There are iid standard normal random variables $Y_1,\ldots,Y_d$ and an independent positive random variable $M \in (0,\infty)$ such that
\begin{gather*}
\bm{X} \stackrel{d}{=} {M}\,(Y_1,\ldots,Y_d).
\end{gather*}
In other words, this means that $\bm{X}$ has a stochastic representation as in (\ref{canonical_construction}) with $H_t:=\Phi(t/M)$, where $\Phi$ denotes the distribution function of a standard normally distributed random variable.
\item[(c)] There is a random variable $Z$ with $\chi^2$-law with $d$ degrees of freedom, a positive random variable $M \in (0,\infty)$, and $\bm{S}$ uniformly distributed on the Euclidean unit sphere, all three being mutually independent, such that
\begin{gather*}
\bm{X} \stackrel{d}{=} {M}\,\sqrt{Z}\,\bm{S}.
\end{gather*}
In other words, the random variable $R$ of the general representation (\ref{representation_spherical}) is of the special form $R \stackrel{d}{=}M\,\sqrt{Z}$.
\item[(d)] The (multivariate) characteristic function of $\bm{X}$ has the form
\begin{gather*}
\IE\Big[e^{i\,(u_1\,X_1+\ldots+u_d\,X_d)} \Big] = \varphi(\norm{\bm{u}}_2^2),\quad \bm{u}=(u_1,\ldots,u_d) \in \IR^d,
\end{gather*}
where $\varphi$ is the Laplace transform $\varphi$ of some positive random variable.
\end{itemize}
\end{theorem}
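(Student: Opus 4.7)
The plan is first to dispatch the routine equivalences (b) $\Leftrightarrow$ (c) $\Leftrightarrow$ (d) and the implication (b) $\Rightarrow$ (a), and then to concentrate on the substantive direction (a) $\Rightarrow$ (b). For (b) $\Leftrightarrow$ (c) I would invoke the polar decomposition of a standard multivariate normal, namely $(Y_1,\ldots,Y_d) \stackrel{d}{=} \sqrt{Z}\,\bm{S}$ with $Z:=Y_1^2+\cdots+Y_d^2\sim\chi^2_d$ independent of the unit vector $\bm{S}:=(Y_1,\ldots,Y_d)/\sqrt{Z}$ (uniform on the Euclidean unit sphere), and multiply both sides by $M$. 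For (b) $\Leftrightarrow$ (d), conditioning on $M$ in (b) gives $\IE[e^{\mathrm{i}\langle\bm{u},\bm{X}\rangle}\,|\,M]=e^{-M^2\|\bm{u}\|_2^2/2}$, whence unconditionally $\IE[e^{\mathrm{i}\langle\bm{u},\bm{X}\rangle}]=\IE[e^{-M^2\|\bm{u}\|_2^2/2}]=\varphi(\|\bm{u}\|_2^2)$ with $\varphi=\mathcal{L}[M^2/2]$; conversely, given $\varphi=\mathcal{L}[N]$ for some $N\geq 0$, one sets $M:=\sqrt{2N}$ and concludes by uniqueness of characteristic functions. Finally, (b) $\Rightarrow$ (a) is immediate from the canonical construction (\ref{canonical_construction}) with $H_t:=\Phi(t/M)$, under which $X_k=M\,Y_k$ is iid $\mathcal{N}(0,M^2)$ conditional on $M$ with distribution function $H$.

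For (a) $\Rightarrow$ (b) I rely on the already-established (d) $\Leftrightarrow$ (b) and reduce matters to showing that $\varphi$ is the Laplace transform of a positive random variable. The plan is to combine the Bernstein-Hausdorff-Widder theorem, which together with $\varphi(0)=1$ identifies Laplace transforms of probability measures on $[0,\infty)$ with completely monotone functions on $[0,\infty)$, with Schoenberg's classical theorem on radial positive definite functions, which says $\varphi$ is completely monotone exactly when $\bm{u}\mapsto\varphi(\|\bm{u}\|_2^2)$ is positive definite on $\IR^n$ for every $n\geq 1$. Using the conditional iid structure I would construct, for each $n\geq 1$, an $n$-variate vector $\bm{X}^{(n)}$ whose characteristic function equals $\chi_n(\bm{u}):=\IE[\prod_{k=1}^n\phi_H(u_k)]$, where $H\sim\gamma\in M_+^1(\mathfrak{H})$ is a random distribution function as in Definition \ref{def_condiid} and $\phi_H$ is its random characteristic function; this is accomplished by extending the iid sequence $U_1,\ldots,U_d$ in (\ref{canonical_construction}) to an infinite iid sequence while keeping the same $H$. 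The remaining task is to verify $\chi_n(\bm{u})=\varphi(\|\bm{u}\|_2^2)$ for every $n$, which by exhibiting $\bm{u}\mapsto\varphi(\|\bm{u}\|_2^2)$ as a characteristic function on $\IR^n$ would deliver its positive definiteness on $\IR^n$.

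The main obstacle is the bootstrap from $n\leq d$ up to $n>d$: for $n\leq d$ the identity $\chi_n(\bm{u})=\varphi(\|\bm{u}\|_2^2)$ follows at once from the $d$-variate sphericity by restricting $\bm{u}$ to $n$ non-zero coordinates, but for $n>d$ the joint distribution of $(X_1,\ldots,X_n)$ is not spherical in $\IR^n$ from exchangeability alone. I would approach this by decomposing an arbitrary rotation of $\IR^n$ into planar rotations of pairs of coordinates; each such planar rotation preserves the $2$-marginal of the extended sequence (which is spherical because $d\geq 2$), and one would then try to propagate this pairwise invariance through the remaining $n-2$ coordinates using the iid-conditional-on-$H$ structure. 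The subtle point is that $2$-marginal sphericity only holds unconditionally, so propagation requires upgrading it to a conditional statement; in effect this amounts to showing that the $d$-variate spherical constraint forces $H$ to be almost surely the distribution function of $\mathcal{N}(0,M^2)$ for some non-negative random variable $M$, at which point $\chi_n(\bm{u})=\IE[e^{-M^2\|\bm{u}\|_2^2/2}]=\varphi(\|\bm{u}\|_2^2)$ is a function of $\|\bm{u}\|_2^2$ in every dimension, and the classical Schoenberg and Bernstein-Hausdorff-Widder machinery closes the argument.
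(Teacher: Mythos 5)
Your handling of (b) $\Leftrightarrow$ (c), (b) $\Leftrightarrow$ (d) and (b) $\Rightarrow$ (a) is correct and essentially identical to the paper's. The problem is the essential implication (a) $\Rightarrow$ (b). Your plan reduces it to showing that the infinite conditionally iid extension of $\bm{X}$ is spherical in every dimension $n>d$, i.e.\ that $\chi_n(\bm{u})=\varphi(\norm{\bm{u}}_2^2)$ for all $n$, so that the classical Schoenberg/Bernstein--Hausdorff--Widder machinery can be invoked. As you yourself note, this identity is not available from exchangeability plus $d$-variate sphericity, and it cannot be boot-strapped cheaply: positive definiteness of $\bm{u}\mapsto\varphi(\norm{\bm{u}}_2^2)$ on $\IR^n$ involves configurations of $m>d+1$ points that need not embed isometrically into $\IR^d$, so the $d$-variate characteristic function says nothing about them. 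The repair you sketch -- planar rotations plus ``propagating pairwise invariance,'' which you concede ``amounts to showing that the $d$-variate spherical constraint forces $H$ to be almost surely the distribution function of $\mathcal{N}(0,M^2)$'' -- is circular: that $H$ is almost surely a centered Gaussian distribution function with random scale \emph{is} statement (b), and once you have it the analytic detour through complete monotonicity is superfluous. So the proposal establishes the routine equivalences and then assumes the hard step; no actual argument for it is given.

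The missing idea, which is how the paper closes the argument, is to condition on the tail-$\sigma$-field $\mathcal{H}:=\cap_{n\geq 1}\sigma(X_n,X_{n+1},\ldots)$ of the infinite exchangeable extension. For any orthogonal $O\in\IR^{d\times d}$, sphericity of the $d$-margin gives $(\bm{X}\,O,X_{d+1},X_{d+2},\ldots)\stackrel{d}{=}(\bm{X},X_{d+1},X_{d+2},\ldots)$, and since $\mathcal{H}$ involves only the tail of the sequence, which is untouched by $O$, the conditional law of $\bm{X}$ given $\mathcal{H}$ is itself almost surely spherical; by de Finetti's Theorem \ref{thm_definetti} it is also conditionally iid. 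A Maxwell-type lemma (independent components together with a spherical joint law force the components to be iid centered normal), proved by factorizing the radial characteristic function and solving the resulting ordinary differential equation, then identifies the conditional law as $\mathcal{N}(0,M^2)$ with $M$ an $\mathcal{H}$-measurable random scale, which is exactly (b) with $H_t=\Phi(t/M)$ in the canonical construction (\ref{canonical_construction}). This conditional upgrade is precisely the step your proposal lacks; if you supply it, your complete-monotonicity route becomes available but also unnecessary.
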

 \begin{proof}
Named after \cite{schoenberg38}, see also \cite{kingman72} or \cite[p.\ 22]{aldous85} for further references. An alternative proof is also given in \cite{diaconis87}. Statement (c) is only included in order to highlight how the random radius $R$ must be chosen in the canonical representation (\ref{representation_spherical}) such that the law of $\bm{X}$ is in $\mathfrak{M}_{\ast}$, see also Remark \ref{rmk_uniform2} below; the interested reader can find a proof for the equivalence (b) $\Leftrightarrow$ (c) in \cite[Lemma 4.2, p.\ 166]{mai12}. Similarly, the equivalence (b) $\Leftrightarrow$ (d) is obvious, and $\varphi$ in (d) equals the Laplace transform of the positive random variable $M^2/2$ with $M$ from (b). Trivially, (b) implies (a). We only verify the non-obvious implication (a) $\Rightarrow$ (b), and the proof consists of two steps, following the lines of \cite[p.\ 22]{aldous85}.
\begin{itemize}
\item[(i)] As a first step we show Maxwell's Theorem, i.e.\ if $X_1,\ldots,X_d$ are independent and $(X_1,\ldots,X_d)$ is spherically symmetric, then all components $X_k$ are actually iid sharing a normal distribution with mean zero. Since $(X_1,\ldots,X_d)$ is spherically symmetric, its characteristic function can be written as
\begin{gather*}
\IE\Big[ e^{i\,(u_1\,X_1+\ldots+u_d\,X_d)}\Big] =:\varphi(\norm{\bm{u}}_2^2),\quad \bm{u}=(u_1,\ldots,u_d) \in \IR^d,
\end{gather*}
for some function $\varphi$ in one variable, see, e.g., \cite[Lemma 4.1, p.\ 161]{mai12}. Denoting the characteristic function of $X_k$ by $f_k$, $k=1,\ldots,d$, independence of the components implies that $\varphi(\norm{\bm{u}}_2^2)=f_1(u_1)\,\dots\,f(u_d)$. Taking the derivative\footnote{Notice that characteristic functions are differentiable.} w.r.t.\ $u_k$ and dividing by $\varphi(\norm{\bm{u}}_2^2)$ on both sides of the last equation implies for arbitrary $k=1,\ldots,d$ that
\begin{gather}
\frac{f_k^{'}(u_k)}{f_k(u_k)\,2\,u_k} = \frac{\varphi^{'}(\norm{\bm{u}}_2^2)}{\varphi(\norm{\bm{u}}_2^2)}.
\label{sr1}
\end{gather}
Let $u,y \in \IR$ arbitrary. Plugging $\bm{u}=(u,\ldots,u)$ into (\ref{sr1}) shows that
\begin{gather}
\frac{f_k^{'}(u)}{f_k(u)\,2\,u}= \frac{\varphi^{'}(\norm{\bm{u}}_2^2)}{\varphi(\norm{\bm{u}}_2^2)} =  \frac{f_j^{'}(u)}{f_j(u)\,2\,u},\quad \mbox{arbitrary }1 \leq k,j \leq d.
\label{sr2}
\end{gather}
Plugging some $\bm{u}$ which has $u$ as its $k$-th and $y$ as its $j$-th component into (\ref{sr1}), we observe
\begin{gather*}
\frac{f_k^{'}(u)}{f_k(u)\,2\,u} = \frac{\varphi^{'}(\norm{\bm{u}}_2^2)}{\varphi(\norm{\bm{u}}_2^2)}= \frac{f_j^{'}(y)}{f_j(y)\,2\,y} \stackrel{(\ref{sr2})}{=} \frac{f_k^{'}(y)}{f_k(y)\,2\,y}.
\end{gather*} 
Since $u,y$ were arbitrary, the functions $x \mapsto f_k^{'}(u)/(f_k(u)\,2\,u)$ are therefore shown to equal some constant $c$ independent of $k$. Since $f_k(0)=1$, solving the resulting ordinary differential equation implies that $f_k(u)=\exp(c\,u^2)$. Left to show is now only that $c \leq 0$, because this would imply that $f_k$ equals the characteristic function of a zero-mean normal. Since $f_k$ is a characteristic function and as such must be positive semi-definite, the inequality 
\begin{gather*}
\det\left[\begin{matrix}
f_k(0-0) & f_k(0-1) \\
f_k(1-0) & f_k(1-1)
\end{matrix}\right]=f_k(0)^2-f_k(1)\,f_k(-1)=1-e^{2\,c} \geq 0
\end{gather*}
must hold. Clearly, this is only possible for $c \leq 0$. The case $c=0$ is ruled out by the assumption that $\bm{X}$ is not identical to a vector of zeros.
\item[(ii)] If the law of $\bm{X}$ lies in $\mathfrak{M}_{\ast}$ we can without loss of generality assume that $\bm{X}$ equals the first $d$ members of an infinite exchangeable sequence $\{X_k\}_{k \in \IN}$. Conditioned on the tail-$\sigma$-field $\mathcal{H}:=\cap_{n \geq 1}\sigma(X_n,X_{n+1},\ldots)$ the random variables $X_1,\ldots,X_d$ are iid according to de Finetti's Theorem \ref{thm_definetti}. We observe for an arbitrary orthogonal matrix $O \in \IR^{d \times d}$ that 
\begin{gather*}
(\bm{X}\,O,X_{d+1},X_{d+2},\ldots) \stackrel{d}{=}(\bm{X},X_{d+1},X_{d+2},\ldots), 
\end{gather*}
since $\bm{X}$ is spherical. Since $\mathcal{H}$ does not depend on $\bm{X}$ (but only on the tail of the infinite sequence), this implies that the conditional distribution of $\bm{X}$ and $\bm{X}\,O$ given $\mathcal{H}$ are identical. As $O$ was arbitrary, $\bm{X}$ conditioned on $\mathcal{H}$ is spherical. Maxwell's Theorem now implies that $\bm{X}$ conditioned on $\mathcal{H}$ is an iid sequence of zero mean normals. Thus, only the standard deviation may still be a $\mathcal{H}$-measurable random variable, which we denote by $M$. 
\end{itemize}
\end{proof}

If (P) in Problem \ref{motivatingproblemrefined} is the property of ``having a spherical law (in some dimension)'', then Schoenberg's Theorem \ref{schoenberg_thm} also implies that $\mathfrak{M}_{\ast}=\mathfrak{M}_{\ast\ast}$, which follows trivially from the equivalence of (a) and (b), since the stochastic construction in (b) clearly works for arbitrary $n>d$ as well. Furthermore, it is observed that the random distribution function $H_t=\Phi(t/M)$ in part (b) satisfies the condition in Lemma \ref{lemma_rs} with $\mu=0$, so conditionally iid spherical laws are radially symmetric. In fact, (arbitrary) spherical laws are always radially symmetric, since $(X_1,\ldots,X_d) \stackrel{d}{=}(-X_1,\ldots,-X_d)$ follows immediately from the definition.

\begin{remark}[Realization of uniform law on Euclidean unit sphere]\label{rmk_uniform2}
Denoting $\bm{Y}=(Y_1,\ldots,Y_d)$, the equivalence (b) $\Leftrightarrow$ (c) in Theorem \ref{schoenberg_thm} implies 
\begin{gather*}
\bm{S} \stackrel{d}{=} \Big( \frac{Y_1}{\norm{\bm{Y}}_2},\ldots,\frac{Y_d}{\norm{\bm{Y}}_2}\Big),
\end{gather*}
which shows how to generate realizations of the uniform law on the Euclidean unit sphere from a list of iid standard normals.  
\end{remark}

\begin{remark}[Elliptical laws]
Spherical laws are always exchangeable, which is easy to see. A popular method to enrich the family of spherical laws to obtain a larger family beyond the exchangeable paradigm is linear transformation. To wit, for $\bm{X} \in \IR^k$ spherical with characteristic generator $\varphi$, $A \in \IR^{k \times d}$ some matrix with $\Sigma:=A'\,A \in \IR^{d \times d}$ and rank of $\Sigma$ equal to $k \leq d$, and with $\bm{b} =(b_1,\ldots,b_d)$ some real-valued row vector, the random vector
\begin{gather}
\bm{Z}=(Z_1,\ldots,Z_d) = \bm{X}\,A+\bm{b}
\label{def_elliptical}
\end{gather}
is said to have an \emph{elliptical law} with parameters $(\varphi,\Sigma,\bm{b})$. This generalization from spherical laws to elliptical laws is especially well-behaved from an analytical viewpoint, since the apparatus of linear algebra gets along perfectly well with the definition of spherical laws. The most prominent elliptical law is the multivariate normal distribution, which is obtained in the special case when $\varphi(x) = \exp(-x/2)$ is the Laplace transform of the constant $1/2$. The case when $\IE[\norm{\bm{X}}_2^2]<\infty$ is of most prominent importance, since the random vector $\bm{Z}$ then has existing covariance matrix given by $\IE[\norm{\bm{X}}_2^2]\,\Sigma\,/k$. 
\par
Since the normal distribution special case occupies a commanding role when deciding whether or not a spherical law is conditionally iid according to Theorem \ref{schoenberg_thm}(b), and since we have also solved our motivating Problem \ref{motivatingproblem} for the multivariate normal law in Example \ref{ex_Normal}, it is not difficult to decide when an elliptical law is conditionally iid as well. To wit, in the most important case when $\IE[\norm{\bm{X}}_2^2]<\infty$ the random vector $\bm{Z}$ in (\ref{def_elliptical}) has a stochastic representation that is conditionally iid if and only if $b_1=\ldots=b_d$, and $\bm{Z} \stackrel{d}{=}R\,\bm{Y}+\bm{b}$ with $R$ some positive random variable with finite second moment and $\bm{Y}=(Y_1,\ldots,Y_d)$ multivariate normal with zero mean vector and covariance matrix such as in Example \ref{ex_Normal}, i.e.\ with identical diagonal elements $\sigma^2 > 0$ and identical off-diagonal elements $\rho\,\sigma^2 \geq 0$.
\end{remark}

\subsection{$\ell_1$-norm symmetric laws}\label{subsec_AC}
According to \cite{mcneil09}, a random vector $\bm{X} \in [0,\infty)^d$ is called \emph{$\ell_1$-norm symmetric} if it has a stochastic representation 
\begin{gather*}
\bm{X} \stackrel{d}{=} R\,\bm{S},
\end{gather*}
where $R$ is a non-negative random variable and the random vector $\bm{S}$ is independent of $R$ and uniformly distributed on the unit simplex $S_d=\{\bm{x} \in [0,\infty)^d\,:\,\norm{\bm{x}}_1=1\}$. Comparing this representation to (\ref{representation_spherical}), the only difference is that $\bm{S}$ is now uniformly distributed on the unit sphere with respect to the $\ell_1$-norm (restricted to the positive orthant $[0,\infty)^d$), rather than on the unit sphere with respect to the Euclidean norm. Consequently, quite similar to spherical laws, realizations of $\ell_1$-norm symmetric distributions must be thought of as being the result of the following two-step simulation algorithm:  first draw one completely random point on the $d$-dimensional unit simplex, and then scale this point according to some one-dimensional probability distribution on the positive half-axis. 
\par 
Remark \ref{rmk_uniform2} points out an important relationship between the (univariate) standard normal distribution and the uniform law on the Euclidean unit sphere (w.r.t.\ the Euclidean norm $\norm{.}_2$). It is not difficult to observe that the (univariate) standard exponential law plays the analogous role for the uniform law on the unit simplex (w.r.t.\ the $\ell_1$-norm $\norm{.}_1$). More precisely, if the components of $\bm{E}=(E_1,\ldots,E_d)$ are iid exponentially distributed with unit mean, then
\begin{gather*}
\bm{S} :=\Big( \frac{E_1}{\norm{\bm{E}}_1},\ldots,\frac{E_d}{\norm{\bm{E}}_1}\Big)  
\end{gather*}
is uniformly distributed on the unit simplex, cf.\ \cite[Lemma 2.2(2), p.\ 77]{mai12} or \cite[Theorem 5.2(2), p.\ 115]{fang90}. An arbitrary $\ell_1$-norm symmetric random vector $\bm{X}$ is represented as
\begin{gather}
\bm{X} \stackrel{d}{=} R\,\Big( \frac{E_1}{\norm{\bm{E}}_1},\ldots,\frac{E_d}{\norm{\bm{E}}_1}\Big) 
\label{canonic_repr_AC}
\end{gather}
with independent $R$ and $\bm{E}$. With the analogy to the spherical case in mind, heuristic reasoning suggests that $\bm{X}$ is extendible if and only if $R$ is chosen such that it ``cancels'' out the denominator of $\bm{S}$ in distribution. Since $\norm{\bm{E}}_1$ has a unit-scale Erlang distribution with parameter $d$, this would imply that $R$ should be chosen as $R=Z/M$ for some positive random variable $M$ and an independent random variable $Z$ with Erlang distribution and parameter $d$. This is precisely the case, as Theorem \ref{thm_AC} below shows.
\par
Generally speaking, it follows from the canonical stochastic representation (\ref{canonic_repr_AC}) that 
\begin{align*}
\IP(X_k>x) &= \IP\Big(E_k>\frac{x}{R-x}\,\sum_{i \neq k}E_i,R>x\Big)=\IE\Big[ e^{-\frac{x}{R-x}\,\sum_{i \neq k}E_i}\,1_{\{R>x\}}\Big]\\
& = \IE\Big[ \max\Big\{ 1-\frac{x}{R},0\Big\}^{d-1}\Big]=:\varphi_{d,R}(x),\quad k=1,\ldots,d,
\end{align*}
where the last equality uses knowledge about the Laplace transform of the Erlang-distributed random variable $\sum_{i \neq k}E_i$. This means that the marginal survival functions of the components $X_k$ are given by the so-called \emph{Williamson $d$-transform} $\varphi_{d,R}$ of $R$. It has been studied in \cite{williamson56}, who shows in particular that the law of $R$ is uniquely determined by $\varphi_{d,R}$. A similar computation as above shows that the joint survival function of $\bm{X}$ is given by
\begin{gather*}
\IP(\bm{X}>\bm{x}) = \varphi_{d,R}(\norm{\bm{x}}_1),\quad \bm{x}=(x_1,\ldots,x_d) \in [0,\infty)^d.
\end{gather*}

Theorem \ref{thm_AC} solves Problem \ref{motivatingproblemrefined} for the property (P) of ``having an $\ell_1$-norm symmetric law (in some dimension)''.

\begin{theorem}[Conditionally iid $\ell_1$-norm symmetric laws]  \label{thm_AC}
Let $\varphi:[0,\infty) \rightarrow [0,1]$ be a function in one variable. The following statements are equivalent: 
\begin{itemize}
\item[(a)] There is an infinite sequence of random variables $\{X_k\}_{k \in \IN}$ such that for arbitrary $d \in \IN$ we have
\begin{gather*}
\IP(\bm{X}>\bm{x}) = \varphi(\norm{\bm{x}}_1),\quad \bm{x} \in  [0,\infty)^d.
\end{gather*}
\item[(b)] The function $\varphi$ equals the Laplace transform of some positive random variable $M$, i.e.\ $\varphi(x)=\IE[\exp(-x\,M)]$.
\end{itemize}
In this case, for arbitrary $d \in \IN$ we have
\begin{gather*}
\bm{X}=(X_1,\ldots,X_d) \stackrel{d}{=} \frac{1}{M}\,Z\,\bm{S}\stackrel{d}{=} \frac{1}{M}\,\bm{E},
\end{gather*}
where $\bm{X}$ as in (a), $M$ as in (b), $\bm{S}$ uniformly distributed on the unit simplex, $\bm{E}=(E_1,\ldots,E_d)$ a vector of iid unit exponentials, and $Z$ a unit-scale Erlang distributed variate with parameter $d$, all mutually independent. In other words, $\bm{X}$ has a stochastic representation as in (\ref{canonical_construction_2}) with $Z_t:=M\,t$, in particular is conditionally iid.
\end{theorem}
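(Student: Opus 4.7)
The plan is to handle the two implications separately. The direction (b) $\Rightarrow$ (a) is a direct computation, whereas (a) $\Rightarrow$ (b) is the substantive part, resting on a Cauchy-type functional equation for the de Finetti kernel of the sequence.

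For (b) $\Rightarrow$ (a), I would take a random variable $M$ with Laplace transform $\varphi$, an independent iid sequence $\{E_k\}_{k \in \IN}$ of unit exponentials, and define $X_k := E_k/M$. Conditioning on $M$ gives $\IP(X_1 > x_1,\ldots,X_d > x_d) = \IE\big[e^{-M\,(x_1+\cdots+x_d)}\big] = \varphi(\norm{\bm{x}}_1)$ for every $d \in \IN$, yielding the desired infinite exchangeable extension. This construction coincides with the canonical one in (\ref{canonical_construction_2}) for $Z_t := M\,t$ and $\epsilon_k := E_k$, and it also produces the final stochastic representation of the theorem, using that $\bm{E} \stackrel{d}{=} Z\,\bm{S}$ with $Z$ Erlang and $\bm{S}$ uniform on the simplex, independent.

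For (a) $\Rightarrow$ (b), the permutation-invariance of $\bm{x} \mapsto \varphi(\norm{\bm{x}}_1)$ makes $\{X_k\}_{k \in \IN}$ an exchangeable sequence, so de Finetti's Theorem \ref{thm_definetti} supplies a $\sigma$-algebra $\mathcal{H}$ conditional on which the $X_k$ are iid; write $\bar{H}_t := \IP(X_1 > t\,|\,\mathcal{H})$. For all $d$ and $x_1,\ldots,x_d \geq 0$ one has $\IE\big[\prod_{k=1}^{d}\bar{H}_{x_k}\big] = \IP(\bm{X} > \bm{x}) = \varphi(x_1 + \cdots + x_d)$. Applying this to the two groupings $(x,y,z_1,\ldots,z_m)$ and $(x+y,z_1,\ldots,z_m)$ of the same sum gives $\IE\big[(\bar{H}_x\bar{H}_y - \bar{H}_{x+y})\prod_{i=1}^{m}\bar{H}_{z_i}\big] = 0$ for every $x,y \geq 0$ and every finite family $\{z_i\}$. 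Choosing the test monomial successively as $\bar{H}_x\bar{H}_y$ and as $\bar{H}_{x+y}$ and subtracting, I obtain $\IE[(\bar{H}_x\bar{H}_y - \bar{H}_{x+y})^2] = 0$, hence $\bar{H}_x\bar{H}_y = \bar{H}_{x+y}$ almost surely for each fixed $(x,y)$.

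To finish, I would pick a countable dense subset $D \subset [0,\infty)$ and intersect the countably many null sets to obtain a single event of full probability on which the multiplicative identity holds for all pairs in $D$; the almost-sure right-continuity and monotonicity of $t \mapsto \bar{H}_t$ then extend it to all $x,y \geq 0$. Combined with $\bar{H}_0 = 1$ (forced because $X_1 \geq 0$), the standard classification of monotone multiplicative functions on $[0,\infty)$ yields $\bar{H}_t = e^{-M\,t}$ for some $\mathcal{H}$-measurable $M \geq 0$. Taking expectations gives $\varphi(t) = \IE[e^{-M\,t}] = \mathcal{L}[M](t)$, which is (b); moreover, conditionally on $M$ the $X_k$ are iid exponentials with rate $M$, which produces $\bm{X} \stackrel{d}{=} \bm{E}/M$ in one stroke. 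I expect the most delicate point to be the passage from the pairwise a.s.\ identity to a single-event statement valid for all pairs simultaneously, which requires careful null-set bookkeeping; the conceptual heart of the argument is the orthogonality-to-monomials trick that reduces the problem to the multiplicative Cauchy equation.
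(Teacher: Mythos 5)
Your proposal is correct in substance but follows a genuinely different route from the paper. The paper proves (a) $\Rightarrow$ (b) as Kimberling's Theorem by working analytically on $\varphi$ itself: it reduces the claim, via Bernstein's Theorem, to complete monotonicity of $\varphi$, and verifies non-negativity of the finite differences $\Delta_h^d[\varphi](x)$ by rewriting them, through an inclusion--exclusion computation with the events built from $U_k=\varphi(X_k)$, as the probability of an explicit event. You instead go through de Finetti's Theorem: you extract the conditional survival function $\bar{H}$, observe that $\IE\big[\prod_k \bar{H}_{x_k}\big]=\varphi(\sum_k x_k)$, and use the orthogonality-to-monomials trick to get $\IE[(\bar{H}_x\bar{H}_y-\bar{H}_{x+y})^2]=0$, whence $\bar H$ solves the multiplicative Cauchy equation almost surely; the countable-dense-set plus right-continuity argument and the classification of monotone multiplicative functions then give $\bar H_t=e^{-Mt}$, so $\varphi=\mathcal{L}[M]$ and the representation $\bm X\stackrel{d}{=}\bm E/M$ comes for free. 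Your route buys a direct construction of the latent frailty $M$ and of the de Finetti measure without invoking Bernstein's Theorem, at the cost of needing a regular (right-continuous) version of the conditional distribution and the null-set bookkeeping you correctly flag; the paper's route stays on the level of the single function $\varphi$ and outsources the representation step to Bernstein. The (b) $\Rightarrow$ (a) direction is the same in both.

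One small caveat: your claim that $\bar H_0=1$ is ``forced because $X_1\geq 0$'' really needs $\IP(X_1>0)=1$ (equivalently $\varphi(0)=1$); with $X_1\geq 0$ only, $\bar H_0\in\{0,1\}$ a.s.\ and $\varphi(0)=\IP(\bar H_0=1)$ could be less than one, and one should also rule out the degenerate value $M=0$ by letting $t\to\infty$ (finiteness of $X_1$ gives $\IP(M=0)=0$). This is the same implicit positivity reading the paper itself uses when it asserts that $\varphi$ is the survival function of a positive random variable from the case $d=1$, so it is a gloss rather than a gap, but a sentence addressing it would make your argument airtight.
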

\begin{proof}
The implication (b) $\Rightarrow$ (a) works precisely along the stochastic model claimed, and is readily observed. The implication (a) $\Rightarrow$ (b) is known as Kimberling's Theorem, see \cite{kimberling74}. We provide a proof sketch in the sequel. From $d=1$ we observe that $\varphi$ is the survival function of some positive random variable. Consequently, due to Bernstein's Theorem\footnote{The original reference is \cite{bernstein29}, a detailed proof can be found in \cite{berg84} or \cite{schilling10}.}, it is sufficient to prove that $\varphi$ is completely monotone, meaning that $(-1)^{d}\,\varphi^{(d)} \geq 0$ for all $d \in \IN_0$. To this end, recall that
\begin{gather*}
(-1)^d\,\varphi^{(d)}(x) = \Delta^d_h[\varphi](x)+O(h),\quad \Delta^d_h[\varphi](x):=\sum_{k=0}^{d}\binom{d}{k}(-1)^{d-k}\,\varphi(x-k\,h),
\end{gather*}
so that it is sufficient to show that $\Delta^d_h[\varphi](x) \geq 0$ for arbitrary $d \in \IN_0$ and $x,h$ such that $0 \leq x-d\,h$. To this end, we consider the infinite sequence of random variables $\{U_k\}_{k \in \IN}$ with $U_k:=\varphi(X_k)$, $k \in \IN$, and with $\alpha:=\varphi(x/d)$ and $\beta:=\varphi(x/d-h)>\alpha$ define the events
\begin{gather*}
A_I:=\Big( \cap_{j \in I}\{U_j \leq \alpha\}\Big)\,\cap\,\Big( \cap_{j \notin I}\{U_j \leq \beta\}\Big),\quad I \subset \{1,\ldots,d\}.
\end{gather*}
A lengthy but straightforward computation, with one application of the inclusion exclusion principle, shows that
\begin{align*}
\Delta^d_h[\varphi](x) = \ldots = \IP\Big( A_{\emptyset}\setminus \big( \cup_{k=1}^{d}\{U_k \leq \alpha\}\big) \Big) \geq 0,
\end{align*}
which implies the claim.
\end{proof}

\begin{remark}[On involved probability transforms]
In Theorem \ref{thm_AC}, the function $\varphi$ in part (b) equals the Laplace transform of the random variable $M$. Furthermore, the survival function of any element in $\mathfrak{M}$ has the form as claimed in (a), only the parameterizing function $\varphi$ needs not be a Laplace transform in general. Instead, $\varphi$ always equals the Williamson $d$-transform of some positive random variable (namely of $R$). The Williamson $d$-transform of some random variable is also a Williamson $(d+1)$-transform (of some other random variable), and Laplace transforms can be viewed as a proper subset of Williamson $d$-transforms given by
\begin{gather*}
\{\mbox{Laplace transforms}\}=\bigcap_{d \in \IN}\{\mbox{Williamson }d\mbox{-transforms}\}.
\end{gather*}
The most important example for a Williamson $d$-transform, which is not a Laplace transform (in fact, not even a Williamson $(d+1)$-transform), is given by $\varphi(x)=\varphi(x;d,r)=(1-x/r)^{d-1}_+$ , with a constant $r>0$. In fact, \cite{williamson56} shows that the set of Williamson $d$-transforms is a simplex with extremal boundary given by $\{\varphi(.;d,r)\}_{r>0}$, which is just another way to say that the function $\varphi_{d,R}$ determines the probability law of the positive random variable $R$ uniquely. Similarly, Laplace transforms form a simplex with extremal boundary given by the functions $x \mapsto \exp(-m\,x)$ for $m>0$, which is just another way to say that the function $\varphi(x)=\IE[\exp(-x\,M)]$ determines the law of the positive random variable $M$ uniquely. Typical parametric examples for Laplace transforms in the context of $\ell_1$-norm symmetric distributions are $\varphi(x)=(1+x)^{-\theta}$ with $\theta>0$, corresponding to a Gamma distribution of $M$, or $\varphi(x)=\exp(-x^{\theta})$ with $\theta \in (0,1)$, corresponding to a stable distribution of $M$.
\end{remark}

\begin{remark}[Archimedean copulas]\label{rmk_AC}
Considering $\bm{X}=(X_1,\ldots,X_d)$ with $\ell_1$-norm symmetric law associated with the Williamson $d$-transform $\varphi=\varphi_{d,R}$, the random vector $(U_1,\ldots,U_d):=\big(\varphi(X_1),\ldots,\varphi(X_d) \big)$ has distribution function
\begin{gather*}
C_{\varphi}(u_1,\ldots,u_d):=\IP(U_1\leq u_1,\ldots,U_d \leq u_d)=\varphi\big(\varphi^{-1}(u_1)+\ldots+ \varphi^{-1}(u_d)\big),
\end{gather*}
for $u_1,\ldots,u_d \in [0,1]$. Recall that $\varphi^{-1}$ denotes the generalized inverse of $\varphi$. The function $C_{\varphi}$ is called an \emph{Archimedean copula} and the study of $\ell_1$-norm symmetric distributions can obviously be translated into an analogous study of Archimedean copulas. In the statistical and applied literature, however, Archimedean copulas have received considerably more attention. For instance, nested and hierarchical extensions of (exchangeable) Archimedean copulas have become quite popular, see, e.g.\ \cite{cossette17,hering10,hofertscherer10,mcneil08,zhu16,mai19}.
\end{remark}

\begin{remark}[Extension to Liouville distributions]\label{rmk_Liouville}
Analyzing the analogy between spherical laws (aka $\ell_2$-norm symmetric laws) and $\ell_1$-norm symmetric laws, there is one common mathematical fact on which the analytical treatment of both families relies. To wit, for both families the uniform distribution on the $d$-dimensional unit sphere can be represented as the normalized vector of iid random variables. In the spherical case the normalized vector $\bm{Y}/\norm{\bm{Y}}_2$ of $d$ iid standard normals $\bm{Y}=(Y_1,\ldots,Y_d)$ is uniform on the $\norm{.}_2$-sphere, whereas in the $\ell_1$-norm symmetric case the normalized vector $\bm{E}/\norm{\bm{E}}_1$ of $d$ iid standard exponentials $\bm{E}=(E_1,\ldots,E_d)$ is uniform on the $\norm{.}_1$-sphere restricted to the positive orthant $[0,\infty)^d$.  Furthermore, in both cases the normalization can be ``canceled out'' in distribution, that is
\begin{gather*}
 \sqrt{Z}\,\frac{\bm{Y}}{\norm{\bm{Y}}_2} \stackrel{d}{=}\bm{Y},\quad  R\,\frac{\bm{E}}{\norm{\bm{E}}_1} \stackrel{d}{=}\bm{E},
\end{gather*}
where $\sqrt{Z} \stackrel{d}{=}\norm{\bm{Y}}_2$ is independent of $\bm{Y}$ and $Z$ has a $\chi^2$-law with $d$ degrees of freedom and $R \stackrel{d}{=}\norm{\bm{E}}_1$ is independent of $\bm{E}$ and has an Erlang distribution with parameter $d$. The so-called \emph{Lukacs Theorem}, due to \cite{lukacs55}, states that the exponential distribution of the $E_k$ in the last distributional equality can be generalized to a Gamma distribution (but no other law on $(0,\infty)$ is possible). More precisely, if $\bm{G}=(G_1,\ldots,G_d)$ are independent random variables with Gamma distributions with the same scale parameter, then $\norm{\bm{G}}_1$ is independent of $\frac{\bm{G}}{\norm{\bm{G}}_1}$, which means that
\begin{gather}
R\,\frac{\bm{G}}{\norm{\bm{G}}_1} \stackrel{d}{=}\bm{G},\mbox{ where } R\stackrel{d}{=}\norm{\bm{G}}_1 \mbox{ is independent of }\bm{G}.
\label{lukacs}
\end{gather}
The random vector $\bm{S}:=\bm{G}/\norm{\bm{G}}_1$ on the unit simplex is not uniformly distributed unless the $G_k$ happen to be iid exponential. In general, the law of $\bm{S}$ is called \emph{Dirichlet distribution}, parameterized by the $d$ values $\bm{\alpha}=(\alpha_1,\ldots,\alpha_d)$, where the Gamma density of $G_k$ is given by
\begin{gather}
f_k(x) = x^{\alpha_k-1}\,e^{-x}/\Gamma(\alpha_k),\quad x>0, \quad k=1,\ldots,d.
\label{gamma_density}
\end{gather}
Notice that the scale parameter of this Gamma distribution is without loss of generality set to one, since it has no influence on the law of $\bm{S}$. A $d$-parametric generalization of $\ell_1$-norm symmetric laws is obtained by replacing the uniform law of $\bm{S}$ on the unit simplex (which is obtained for $\alpha_1=\ldots=\alpha_d$) with a Dirichlet distribution (with arbitrary $\alpha_k>0$). One says that the random vector $\bm{X}=R\,\bm{S}$ with $R$ some positive random variable and $\bm{S}$ an independent Dirichlet-distributed random vector on the unit simplex, follows a \emph{Liouville distribution}. It is precisely the property (\ref{lukacs}) that implies that the generalization to Liouville distributions is still analytically quite convenient to work with, see \cite{mcneil10} for a detailed study. Analogous to the $\ell_1$-norm symmetric case, the components of $\bm{X}$ are conditionally iid if $\alpha_1=\ldots=\alpha_d$ and $R$ satisfies $R \stackrel{d}{=}Z/M$ with $Z \stackrel{d}{=}\norm{\bm{G}}_1$ and $M$ some independent positive random variable. 
\end{remark}

Having at hand the apparatus of Archimedean copulas, we are now in the position to provide a non-trivial example for the situation $\mathfrak{M}_{\ast\ast} \subsetneq \mathfrak{M}_{\ast}$.

\begin{example}[In general, $\mathfrak{M}_{\ast\ast} \subsetneq \mathfrak{M}_{\ast}$] \label{ex_MstarnotMstarstar}
Consider the family $\mathfrak{M} \subset M_+^1([0,1]^2)$ defined by the property (P) of ``having an Archimedean copula as distribution function and being radially symmetric''. It is well-known since \cite[Theorem 4.1]{frank79} that the set $\mathfrak{M}$ comprises precisely Frank's copula family, that is the bivariate distribution function of an element in $\mathfrak{M}$ is either given by $C_{-\infty}(u_1,u_2):=\max\{u_1+u_2-1,0\}$, by $C_0(u_1,u_2):=u_1\,u_2$, by $C_{\infty}(u_1,u_2):=\min\{u_1,u_2\}$, or by 
\begin{gather*}
C_{\theta}(u_1,u_2) := -\frac{1}{\theta}\,\log\Big\{ 1+\,\frac{\big( e^{-\theta\,u_1}-1\big)\,\big( e^{-\theta\,u_2}-1\big)}{\big( e^{-\theta}-1\big)}\Big\},\quad u_1,u_2 \in [0,1],
\end{gather*}
for some parameter $\theta \in (-\infty,0) \cup (0,\infty)$. Since Kendall's Tau of the copula $C_{\theta}$ is negative in the case $\theta<0$, Lemma \ref{lemma_kendall} implies that the subset $\mathfrak{M}_{\ast}$ can at best contain the elements corresponding to $\theta \in [0,\infty]$. Indeed, the cases $\theta \in \{0,\infty\}$ are obviously contained in $\mathfrak{M}_{\ast\ast} \subset \mathfrak{M}_{\ast}$, and for $\theta \in (0,\infty)$ membership in $\mathfrak{M}_{\ast}$ follows via the canonical construction (\ref{canonical_construction}) with the choice $H \sim \gamma \in M_+^1(\mathfrak{H}_+)$, given by
\begin{gather}
H_t = \Big( \frac{1-e^{-\theta\,t}}{1-e^{-\theta}}\Big)^M,\quad t \in [0,1],
\label{ACciidprocess}
\end{gather}
for a random variable $M$ with logarithmic distribution $\IP(M=m)=(1-\exp(-\theta))^m/(m\,\theta)$, $m \in \IN$. Furthermore, we can deduce from Theorem \ref{thm_AC} that the property of ``having an Archimedean copula as distribution function (in arbitrary dimension)'' implies that potential elements in $\mathfrak{M}_{\ast\ast}$ must necessarily be induced by a stochastic process of the form (\ref{ACciidprocess}) with some positive random variable $M$, which must necessarily be logarithmic in the radially symmetric case by the result of Frank. The only thing left to check is whether the multivariate Archimedean copula derived from the canonical construction via $H$ defined by (\ref{ACciidprocess}) with logarithmic $M$ is radially symmetric in arbitrary dimension $d \geq 2$. According to Lemma \ref{lemma_rs} this is the case if and only if
\begin{align*}
& \Big\{\Big( \frac{1-e^{-\theta\,\big(\frac{1}{2}-t\big)}}{1-e^{-\theta}}\Big)^M \Big\}_{t \in \big[-\frac{1}{2},\frac{1}{2} \big]} = \{H_{\frac{1}{2}-t}\}_{t \in \big[-\frac{1}{2},\frac{1}{2} \big]} \\
& \qquad \stackrel{d}{=} \{1-H_{t+\frac{1}{2}}\}_{t \in \big[-\frac{1}{2},\frac{1}{2} \big]}=\Big\{1-\Big( \frac{1-e^{-\theta\,\big(t+\frac{1}{2}\big)}}{1-e^{-\theta}}\Big)^M \Big\}_{t \in \big[-\frac{1}{2},\frac{1}{2} \big]}. 
\end{align*}
This statement is false, however, as will briefly be explained. Assuming it was true, then in particular for $t=0$ we observe that the law of the random variable $H_{\frac{1}{2}}$ was symmetric about $\frac{1}{2}$. In particular, this symmetry would imply
\begin{align*}
0=\IE\Big[\Big(H_{\frac{1}{2}}-\frac{1}{2}\Big)^3 \Big]=[\ldots]=\varphi_{\theta}\big( 3\,\varphi_{\theta}^{-1}(1/2)\big)-\frac{3}{2}\,\varphi_{\theta}\big( 2\,\varphi_{\theta}^{-1}(1/2)\big)+\frac{1}{4},
\end{align*}
with $\varphi_{\theta}(x)=-\log\big( e^{-x}\,(e^{-\theta}-1)+1\big)/\theta$. Numerically, it is easily verified that the last equality does not hold for any $\theta \in (0,\infty)$, since the right-hand side is strictly smaller than zero. Thus, we see that $\mathfrak{M}_{\ast\ast}$ consists only of two elements, namely those corresponding to $\{0,\infty\}$. Thus, $\mathfrak{M}_{\ast\ast} \subsetneq \mathfrak{M}_{\ast} \subsetneq \mathfrak{M}$, since $\{0,\infty\} \subsetneq [0,\infty] \subsetneq [-\infty,\infty]$.
\end{example}

\subsection{$\ell_{\infty}$-norm symmetric laws}\label{subsec_gnedin}
\cite[Theorem 2]{gnedin95} studies random vectors $\bm{X}=(X_1,\ldots,X_d)$ which are absolutely continuous with density given by
\begin{gather}
f_{\bm{X}}(\bm{x}) = g_d(x_{[d]}),\quad \bm{x} \in (0,\infty)^d,
\label{gnedin_linfsymmdens}
\end{gather}
with some measurable function $g_d:(0,\infty) \rightarrow [0,\infty)$. Recall that 
\begin{gather*}
x_{[d]}:=\max\{x_1,\ldots,x_d\}=\norm{\bm{x}}_{\infty} 
\end{gather*}
equals the $\ell_{\infty}$-norm of $\bm{x} \in (0,\infty)^d$. Since $f_{\bm{X}}$ is invariant with respect to permutations of the components of $\bm{x}$, the random vector $\bm{X}$ is exchangeable. But whether or not it is conditionally iid depends on the choice of $g_d$. First of all, since $f_{\bm{X}}$ is a probability density,
\begin{gather}
1=\iint_{(0,\infty)^d}g_d(x_{[d]})\,\mathrm{d}\bm{x} = d\,\int_0^{\infty}g_d(x)\,x^{d-1}\,\mathrm{d}x,
\label{gnedin_nec}
\end{gather}
constituting a necessary and sufficient integrability condition on $g_d$ such that $f_{\bm{X}}$ defines a proper probability density. Furthermore, lower-dimensional margins of $\bm{X}$ have a density of the same structural form, since
\begin{align}
\int_0^{\infty}f_{\bm{X}}(x_1,\ldots,x_{d-1},x)\,\mathrm{d}x &= g_{d-1}(x_{[d-1]}),\quad x_1,\ldots,x_{d-1}>0, \nonumber\\
\mbox{where }g_{d-1}(x)&:=\int_x^{\infty}g_d(u)\,\mathrm{d}u+x\,g_d(x),\label{marginingcond_gnedin}
\end{align}
and the function $g_{d-1}$ is easily checked to satisfy (\ref{gnedin_nec}) in dimension $d-1$, that is $1=(d-1)\,\int_0^{\infty}g_{d-1}(x)x^{d-2}\,\mathrm{d}x$. It is further not difficult to verify that $g_d$ is given in terms of $g_{d-1}$ as
\begin{gather}
g_d(x) = \frac{g_{d-1}(x)}{x}-\int_x^{\infty}\frac{g_{d-1}(u)}{u^2}\,\mathrm{d}u.
\label{identityfordecr_gnedin}
\end{gather}
If $\mathfrak{M}$ denotes the family of all laws with density of the form (\ref{gnedin_linfsymmdens}), i.e.\ with a function $g_d$ satisfying (\ref{gnedin_nec}), the following result provides necessary and sufficient conditions on $g_d$ to define a law in $\mathfrak{M}_{\ast}$.

\begin{theorem}[Conditionally iid $\ell_{\infty}$-norm symmetric densities]\label{thm_gnedin}
Let $\mathfrak{M}$ be the family of probability laws on $(0,\infty)^d$ with densities of the form (\ref{gnedin_linfsymmdens}) with a measurable function $g_d:(0,\infty) \rightarrow [0,\infty)$ satisfying (\ref{gnedin_nec}). For $\bm{X}$ with law in $\mathfrak{M}$, the following statements are equivalent:
\begin{itemize}
\item[(a)] The law of $\bm{X}$ lies in $\mathfrak{M}_{\ast}$.
\item[(b)] $g_d$ is non-increasing. 
\item[(c)] For a vector $\bm{U}=(U_1,\ldots,U_d)$ whose components are iid uniform on $[0,1]$ and an independent, positive random variable $M$ we have
\begin{gather*}
\bm{X} \stackrel{d}{=} M\,\bm{U}.
\end{gather*}
\end{itemize}
\end{theorem}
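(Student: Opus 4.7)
The plan is to close the cycle via the chain (c) $\Rightarrow$ (a), (c) $\Rightarrow$ (b), (b) $\Rightarrow$ (c), and (a) $\Rightarrow$ (b), where the last implication is the technical core.

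I would first dispose of the constructive directions. For (c) $\Rightarrow$ (a), conditional on $M$ the vector $M\,\bm{U}$ has iid uniform$[0,M]$ components, so (c) already realises the canonical construction (\ref{canonical_construction}) with $H_t=\min\{t/M,1\}$. For (c) $\Rightarrow$ (b), computing the density by conditioning on $M$ gives
\begin{gather*}
f_{\bm{X}}(\bm{x}) = \IE\big[M^{-d}\,1_{\{M > x_{[d]}\}}\big],
\end{gather*}
so that $g_d(t)=\IE[M^{-d}\,1_{\{M>t\}}]$ is visibly non-increasing in $t$.

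For (b) $\Rightarrow$ (c) I would construct $M$ directly from $g_d$. Letting $\mu$ denote the Lebesgue--Stieltjes measure associated with the non-increasing $-g_d$, declare $\IP(M \in \mathrm{d}m):=m^d\,\mu(\mathrm{d}m)$. Monotonicity together with the integrability (\ref{gnedin_nec}) forces $m^d\,g_d(m)\to 0$ at both $m\downarrow 0$ and $m \to \infty$ (the first via $m^d\,g_d(m) \leq d\,\int_0^m u^{d-1}\,g_d(u)\,\mathrm{d}u$, the second via tail integrability). Integration by parts then yields $\int_0^\infty m^d\,\mu(\mathrm{d}m) = d\,\int_0^\infty m^{d-1}\,g_d(m)\,\mathrm{d}m = 1$, so $M$ is a bona fide positive random variable. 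A routine check gives $\IE[M^{-d}\,1_{\{M>x\}}]=g_d(x)$, so the density of $M\,\bm{U}$ matches that of $\bm{X}$.

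The main obstacle is (a) $\Rightarrow$ (b). The shape-only observation I would build on is that the joint order-statistic density of $\bm{X}$ is $d!\,g_d(x_{(d)})$ on the simplex $\{0<x_{(1)}<\cdots<x_{(d)}\}$ while the marginal density of $X_{(d)}$ is $d\,t^{d-1}\,g_d(t)$, so the conditional density of $(X_{(1)},\ldots,X_{(d-1)})$ given $X_{(d)}=m$ equals $(d-1)!/m^{d-1}$, i.e.\ the lower order statistics are distributed exactly as those of $d-1$ iid uniforms on $[0,m]$. Invoking (a), Lemma \ref{lemma_condGC} and de Finetti supply an infinite exchangeable extension with latent random c\`adl\`ag CDF $H$, which must almost surely be absolutely continuous with some density $h$ since $\bm{X}$ has a density. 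Conditional on $H$, those same lower order statistics are order statistics of $d-1$ iid samples from the renormalised law $h/H_m$ on $(0,m]$. Integrating the identity $\IE[\prod_k h(x_k)]=g_d(x_{[d]})$ over appropriate hypercubes produces the auxiliary moment identities
\begin{gather*}
\IE\big[H_s^{d-1}\,h(t)\big] = s^{d-1}\,g_d(t),\quad 0<s\leq t,
\end{gather*}
and analogous ones in which different coordinates are integrated, whose combination forces $h$ to be a.s.\ constant on $[0,M]$ with $M:=\sup\{t:H_t<1\}$. The delicate point is promoting these moment identities into a pointwise (almost sure) structural statement about the random CDF, not merely a statement about its mixed moments; I would therefore complement them with Lemma \ref{lemma_condGC}, identifying $H$ with the limiting empirical distribution function of the infinite extension. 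The structural conditional-on-max fact then forces this limit, restricted to $[0,X_{(n)}]$, to be asymptotically linear, which pins down $H_t=\min\{t/M,1\}$ almost surely. Together with (c) $\Rightarrow$ (b) already verified, this produces the representation $g_d(t)=\IE[M^{-d}\,1_{\{M>t\}}]$ and hence the monotonicity claimed in (b), closing the loop.
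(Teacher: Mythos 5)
Your constructive directions are fine and essentially coincide with the paper's: (c)$\Rightarrow$(a) is immediate, and your construction of $M$ in (b)$\Rightarrow$(c) via the measure $m^d\,\mu(\mathrm{d}m)$, with the boundary terms killed by monotonicity plus (\ref{gnedin_nec}), is exactly the paper's integration-by-parts argument. The genuine problems sit in your (a)$\Rightarrow$(b). The first gap is the assertion that the latent random distribution function $H$ ``must almost surely be absolutely continuous with some density $h$ since $\bm{X}$ has a density.'' That inference is unjustified and, as a general implication, false: a conditionally iid law on $\IR^2$ can have a Lebesgue density while its directing random distribution function is almost surely singular continuous. For instance, let $H_t=C\big((t-V)/W\big)$ with $C$ the Cantor distribution function and $(V,W)$ having a joint density with $W>0$; given $(Y_1,Y_2)$ iid Cantor with $Y_1\neq Y_2$ a.s., the affine map $(v,w)\mapsto(v+wY_1,v+wY_2)$ is invertible, so $(X_1,X_2)=(V+WY_1,V+WY_2)$ is absolutely continuous although every conditional law is singular. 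Nothing in hypothesis (a) excludes such behaviour a priori, so the moment identities $\IE[H_s^{d-1}h(t)]=s^{d-1}g_d(t)$ cannot even be written down at the point where you need them; this step requires a real argument, not the one-line appeal to the existence of $f_{\bm{X}}$.

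The second gap is the passage from those degree-$d$ mixed-moment identities to the pointwise statement $H_t=\min\{t/M,1\}$ almost surely. As the paper emphasizes, $\Theta_d$ is not injective for fixed $d$, so moments of degree $d$ do not determine the law of $H$; and your proposed rescue --- Glivenko--Cantelli along the extension combined with the ``conditional-on-max'' uniformity of the lower order statistics applied to $X_{(n)}$ as $n\to\infty$ --- silently assumes that the $n$-dimensional margins of the extension again have densities of the form (\ref{gnedin_linfsymmdens}). Hypothesis (a) only asserts membership in $\mathfrak{M}_{\ast}$, i.e.\ that \emph{some} exchangeable extension of the given $d$-dimensional law exists; that a structure-preserving extension exists (membership in $\mathfrak{M}_{\ast\ast}$) is a \emph{consequence} of the theorem, not an admissible input, so the argument is circular at exactly the decisive point. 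Note also that pinning down $H$ itself is stronger than what you need, since (b)$\Rightarrow$(c) is already in hand. The paper circumvents both issues: it extracts from the infinite extension only the covariance inequality $\IE[\xi_1\xi_2]\geq 0$ (from $0\leq\IE[(\xi_1+\ldots+\xi_n)^2]$ and $n\to\infty$) for $\xi_k$ built from indicators of small intervals around $s\leq t$ intersected with a Lusin continuity set; these expectations involve only the bivariate margin, whose density $g_2(x_{[2]})$ is computed from $g_d$ by (\ref{marginingcond_gnedin}), yielding that $g_2$ is non-increasing, and monotonicity is then lifted to $g_d$ through the inversion formula (\ref{identityfordecr_gnedin}). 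To salvage your route you would have to both rule out singular directing measures and control margins of dimension larger than $d$ without assuming them; the paper's two-dimensional covariance argument avoids having to do either.
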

\begin{proof}
This is \cite[Theorem 2]{gnedin95}. Clearly, (c) $\Rightarrow$ (a) is obvious. In order to see (b) $\Rightarrow (c)$, we first conclude from (\ref{gnedin_nec}) that 
\begin{gather}
0 = \lim_{x \rightarrow \infty}g_d(x)\,x^d = \lim_{x \rightarrow \infty}g_d\Big(\frac{1}{x} \Big)\,\frac{1}{x^d}.
\label{shortrefgnedinproof}
\end{gather}
By non-increasingness, we may without loss of generality assume that $g_d$ is right-continuous (otherwise, change to its right-continuous version, which does not change the density $f_{\bm{X}}$ essentially). Applying integration by parts, (\ref{shortrefgnedinproof}) and (\ref{gnedin_nec}) imply
\begin{gather*}
\int_0^{\infty}x^d \,\mathrm{d}\big(-g_d(x)\big) = d\,\int_0^{\infty}g_d(x)\,x^{d-1}\,\mathrm{d}x = 1.
\end{gather*}
Consequently, $x \mapsto \int_0^{x}y^d\,\mathrm{d}\big( -g_d(y)\big)$ defines the distribution function of a positive random variable $M$, and we see that
\begin{gather*}
\IE[1_{\{M>x\}}\,M^{-d}] = \int_{x}^{\infty}\frac{y^d}{y^d}\,\mathrm{d}\big( -g_d(y)\big) = g_d(x).
\end{gather*}
Now let $\bm{U}$ as claimed be independent of $M$. Conditioned on $M$, the density of $M\,\bm{U}$ is
\begin{gather*}
\bm{x} \mapsto \prod_{k=1}^d\frac{1_{\{0<x_k <M\}}}{M} = 1_{\{0<x_{[d]} <M\}}\,\frac{1}{M^d}.
\end{gather*}
Integrating out $M$, the density of $M\,\bm{U}$ is found to be
\begin{gather*}
\int_0^{\infty}1_{\{x_{[d]} <m\}}\,\frac{1}{m^d}\mathrm{d}\IP(M \leq m) = \IE[1_{\{M>x_{[d]}\}}\,M^{-d}] = g_d(x_{[d]}),
\end{gather*}
which shows (c). The hardest part is (a) $\Rightarrow$ (b). Fix $\epsilon>0$ arbitrary. Due to measurability of $g_d$, Lusin's Theorem guarantees continuity of $g_d$ on a set $C_{\epsilon}$ whose complement has Lebesgue measure less than $\epsilon$. Without loss of generality we may assume that all points $t$ in $C_{\epsilon}$ are density points, i.e.\ satisfy 
\begin{gather*}
\lim_{\delta \searrow 0}\frac{\lambda(C_{\epsilon} \cap [t-\delta,t+\delta])}{2\,\delta} = 1,
\end{gather*}
where $\lambda$ denotes Lebesgue measure. Let $\{X_k\}_{k \in \IN}$ an infinite exchangeable sequence such that $d$-margins have the density $f_{\bm{X}}$. Fix $t \geq s$ arbitrary. We define the sequence of random variables $\{\xi_k\}_{k \in \IN}$ by
\begin{gather*}
\xi_k := \frac{1}{2\,\delta}\big( 1_{\{X_k \in A_s\}}-1_{\{X_k \in A_{t}\}}\big),\quad k \in \IN,
\end{gather*} 
where $A_x :=C_{\epsilon} \cap [x-\delta,x+\delta]$ for $x \in \{s,t\}$. Notice that the $\xi_k$ are square-integrable and
\begin{gather*}
0 \leq \IE[(\xi_1+\ldots+\xi_d)^2] = d\,\IE[\xi_1^2]+d\,(d-1)\,\IE[\xi_1\,\xi_2].
\end{gather*}
If we divide by $d^2$ and let $d \rightarrow \infty$, it follows that $\IE[\xi_1\,\xi_2] \geq 0$. Denoting by $g_2$ the marginal density of $(X_1,X_2)$, we observe 
\begin{align*}
0 & \leq \IE[\xi_1\,\xi_2] = \frac{1}{4\,\delta^2}\,\big\{ \IE[1_{\{X_1,X_2 \in A_s\}}]+\IE[1_{\{X_1,X_2 \in A_t\}}]-2\,\IE[1_{\{X_1 \in A_t,X_2 \in A_s\}}]\big\}\\
& = \frac{1}{4\,\delta^2}\,\Big\{\iint_{A_s \times A_s}g_2(x_{[2]})\,\mathrm{d}\bm{x} +\iint_{A_t \times A_t}g_2(x_{[2]})\,\mathrm{d}\bm{x}-2\,\iint_{A_t \times A_s}g_2(x_{[2]})\,\mathrm{d}\bm{x}\Big\}\\
& = g_2(\eta_s)+g_2(\eta_{t})-2\,g_2(\tilde{\eta}_t),
\end{align*}
for certain values $s-\delta \leq \eta_s \leq s+\delta$ and $t-\delta \leq \eta_t,\tilde{\eta}_t \leq t+\delta$ by the mean value theorem for Lebesgue integration. As $\delta \searrow 0$, we thus observe that $g_2(s) \geq g_2(t)$, i.e.\ $g_2$ is non-increasing. Making use of (\ref{identityfordecr_gnedin}) and integrating by parts, we observe that
\begin{gather*}
g_3(x) = \int_x^{\infty}\frac{1}{u}\,\mathrm{d}\big(-g_2(u)\Big),
\end{gather*}
which implies that $g_3$ is non-increasing as well. Inductively, the same argument implies that $g_4,\ldots,g_d$ are all non-increasing.
\end{proof} 
From the equivalence of (a) and (c) in Theorem \ref{thm_gnedin} we observe easily that $\mathfrak{M}_{\ast}=\mathfrak{M}_{\ast\ast}$, when considering the property (P) of ``having a density of the form (\ref{gnedin_linfsymmdens}) (in some dimension $d \in \IN$)'' in Problem \ref{motivatingproblemrefined}. Notice furthermore that the law of $M\,\bm{U}$ is static in the sense defined in the beginning of this section, and we have
\begin{gather*}
H_t:=\IP(X_k \leq t\,|\,M) = \max\Big\{ 0,\min\Big\{ 1,\frac{t}{M}\Big\}\Big\},\quad t \in \IR,
\end{gather*}
for $X_k:=M\,U_k$ as defined in part (c) of Theorem \ref{thm_gnedin}.

\begin{remark}[Common umbrella of $\ell_p$-norm symmetry results]
Theorem \ref{thm_gnedin} on $\ell_{\infty}$-norm symmetric densities is very similar in nature to Schoenberg's Theorem \ref{schoenberg_thm} on $\ell_2$-norm symmetric characteristic functions and Theorem \ref{thm_AC} on $\ell_1$-norm symmetric survival functions, which makes it a beautiful result with regards to the present survey. The reference \cite{rachev91} considers all these three cases under one common umbrella, and even manages to generalize them in some meaningful sense to the case of arbitrary $\ell_p$-norm, with $p \in [1,\infty]$ arbitrary\footnote{The authors even allow for $p \in (0,1)$, but in this case $\norm{.}_p$ is no longer a norm.}. More precisely, it is shown that an infinite exchangeable sequence $\{X_k\}_{k \in \IN}$ of the form $X_k:=M\,Y_k$, $k \in \IN$, with $M>0$ and an independent iid sequence $\{Y_k\}_{k \in \IN}$ of positive random variables is $\ell_p$-norm symmetric in some meaningful sense\footnote{See \cite{rachev91} for details.} if and only if the random variables $Y_k$ have density $f_p$ given by
\begin{gather*}
f_p(x) := \frac{p^{1-\frac{1}{p}}}{\Gamma(1/p)}\,e^{-\frac{x^p}{p}},\quad 0<p,x<\infty,\quad f_{\infty}(x):=1_{\{x \in (0,1)\}}.
\end{gather*}
Notice that $f_1$, $f_2$, and $f_{\infty}$ are the densities of the unit exponential law, the absolute value of a standard normal law, and the uniform law on $[0,1]$, respectively. This parametric family in the parameter $p$ is further investigated, and might for instance be characterized by the fact that $f_p$ for $p<\infty$ has maximal entropy among all densities on $(0,\infty)$ with $p$-th moment equal to one, and $f_{\infty}$ has maximal entropy among all densities with support $(0,1)$, which is \cite[Theorem 3.5]{rachev91}. 
\end{remark}

An analogous result to Theorem \ref{thm_gnedin} on mixtures of the form $M\,\bm{U}$, when the components of $\bm{U}$ are iid uniform on $[-1,1]$, is also presented in \cite{gnedin95}. The resulting densities depend on two arguments, $x_{[1]}$ and $x_{[d]}$. Furthermore, \cite[Corollary 4.3]{rachev91} prove that an infinite exchangeable sequence $\{X_k\}_{k \in \IN}$ satisfies
\begin{gather*}
\{X_k\}_{k \in \IN} \stackrel{d}{=} \{M\,U_k\}_{k \in \IN},\quad U_1,U_2,\ldots \mbox{ iid uniform on }[0,1],\,M>0\mbox{ independent},
\end{gather*}
if and only if for arbitrary $d \in \IN$ and almost all $s>0$ the law of $\bm{X}=(X_1,\ldots,X_d)$ conditioned on the event $\{\norm{\bm{X}}_{\infty}=s\}$ is uniformly distributed on the sphere $\{\bm{x} \in (0,\infty)^d\,:\,\norm{\bm{x}}_{\infty}=s\}$. This provides an alternative characterization of densities that are $\ell_{\infty}$-norm symmetric and conditionally iid. 

\begin{remark}[Relation to non-homogeneous pure birth processes]\label{rmk_purebirth}
\cite{shaked02} provide an interesting interpretation of $\ell_{\infty}$-norm symmetric densities, which is briefly explained. Every non-negative function $g_d$ satisfying (\ref{gnedin_nec}) is of the form
\begin{gather*}
g_d(x) = c_d\,r_d(x)\,e^{-\int_0^{x}r_d(u)\,\mathrm{d}u}
\end{gather*}
for some non-negative function $r_d$ satisfying $\int_0^{\infty}r_d(x)\,\mathrm{d}x=\infty$, and some normalizing constant $c_d>0$. To wit, a function
\begin{gather}
r_d(x):=\frac{g_d(x)}{c_d\,\int_x^{\infty}g_d(u)\,\mathrm{d}u},\quad x>0,
\label{solution_rfromg}
\end{gather}
for some normalizing constant $c_d>0$ does the job, as can readily be checked. From such a function $r_d$ we iteratively define functions $r_{d-1},\ldots,r_1$ by solving the equations 
\begin{gather}
r_k(x)\,e^{-R_{k}(x)}=\frac{e^{-R_{k+1}(x)}}{\int_0^{\infty}e^{-R_{k+1}(u)}\,\mathrm{d}u},\quad k=d-1,\ldots,1,
\label{recursion_epoch_birth}
\end{gather}
where $R_k(x):=\int_0^{x}r_k(u)\,\mathrm{d}u$ for $k=1,\ldots,d$. Notice that $r_k$ is related to the right-hand side of (\ref{recursion_epoch_birth}) exactly in the same way as $r_d$ is related to $g_d$, so the solution (\ref{solution_rfromg}) shows how the $r_k$ look like in terms of $r_{k+1}$. We define independent positive random variables $E_1,\ldots,E_d$ with survival functions $\IP(E_k>x)=\exp(-R_k(x))$, $k=1,\ldots,d$, $x \geq 0$. Independently, let $\Pi$ be a random permutation of $\{1,\ldots,d\}$ with $\IP(\Pi=\pi)=1/d!$ for each permutation $\pi$ of $\{1,\ldots,d\}$, i.e.\ $\Pi$ is uniformly distributed on the set of all $d!$ permutations. We consider the increasing sequence of random variables $T_1<T_2 <\ldots<T_d$ defined by $T_k:=E_1+\ldots+E_k$. Then the (obviously exchangeable) random vector $\bm{X}=(X_1,\ldots,X_d):=(T_{\Pi(1)},\ldots,T_{\Pi(d)})$ has density (\ref{gnedin_linfsymmdens}). If $E_1,E_2,\ldots$ is an arbitrary sequence of independent, absolutely continuous, positive random variables the counting process
\begin{gather*}
N_t:=\sum_{k \geq 1}1_{\{E_1+\ldots+E_k \leq t\}},\quad t \geq 0,
\end{gather*}
is called \emph{non-homogeneous pure birth process} with intensity rate functions $r_k(x):=-\frac{\partial}{\partial x}\log\{\IP(E_k>x)\}$, $k \geq 1$. A random permutation of the first $d$ jump times $T_k:=E_1+\ldots+E_k$, $k =1,\ldots,d$, of a pure birth process $N$ thus has an $\ell_{\infty}$-norm symmetric density if the intensities $r_1,\ldots,r_{d-1}$ can be retrieved recursively from $r_d$ via (\ref{recursion_epoch_birth}). The case of arbitrary intensities $r_1,\ldots,r_d$ hence provides a natural generalization of the family of $\ell_{\infty}$-norm symmetric densities. It appears to be an interesting open problem to determine necessary and sufficient conditions on $r_1,\ldots,r_d$ such that the respective exchangeable density is conditionally iid, see also paragraph \ref{open_iidunif} below. 
\end{remark}

\begin{example}[Pareto mixture of uniforms]\label{ex_ParetoUnif}
Let $M$ in Theorem \ref{thm_gnedin} have survival function $\IP(M>x)=\min\{1,x^{-\alpha}\}$ for some $\alpha>0$. The associated function $g_d$ generating the $\ell_{\infty}$-norm symmetric density is given by
\begin{gather*}
g_d(x) = \IE[1_{\{M>x\}}\,M^{-d}] = \alpha\,\int_{\max\{x,1\}}^{\infty}u^{-d-1-\alpha}\,\mathrm{d}u = \frac{\alpha}{d+\alpha}\max\{1,x\}^{-d-\alpha}.
\end{gather*}
The components $X_k$ of $\bm{X}$ have the following one-dimensional distribution function $G(x):=\IP(X_k \leq x)$, and respective inverse $G^{-1}$, given by
\begin{align*}
G(x) &= \begin{cases}
\frac{\alpha}{1+\alpha}\,x & \mbox{, if }x<1\\
1-\frac{1}{1+\alpha}\,x^{-\alpha} & \mbox{, if }x \geq 1\\
\end{cases},\\
G^{-1}(y) &= \begin{cases}
\frac{1+\alpha}{\alpha}\,y & \mbox{, if }0 < y<\frac{\alpha}{1+\alpha}\\
\big( (1-y)\,(1+\alpha)\big)^{-\frac{1}{\alpha}} & \mbox{, if }\frac{\alpha}{1+\alpha}\leq y<1\\
\end{cases}.
\end{align*}
This induces the one-parametric bivariate copula family defined by
\begin{align*}
&C_{\alpha}(u_1,u_2) := \IP\big( G(X_1) \leq u_1,G(X_2) \leq u_2\big)\\
&  \qquad = \begin{cases}
\frac{(1+\alpha)^2}{\alpha\,(\alpha+2)}\,u_{1}\,u_{2} & \mbox{, if }u_1,\,u_2 \leq \frac{\alpha}{1+\alpha}\\
u_{[1]}-\frac{(1+\alpha)^{1+1/\alpha}}{2+\alpha}\,u_{[1]}\,(1-u_{[2]})^{1+1/\alpha} & \mbox{, if }u_{[1]} \leq \frac{\alpha}{1+\alpha} \leq u_{[2]}\\
u_{[1]}-\frac{\alpha}{2+\alpha}\,(1-u_{[1]})^{-1/\alpha}\,(1-u_{[2]})^{1+1/\alpha} & \mbox{, else}\\
\end{cases}.
\end{align*}
Scatter plots from this copula for different values of $\alpha$ are depicted in Figure \ref{fig:Pareto}, visualizing the dependence structure behind pairs of $\bm{X}$. The dependence decreases with $\alpha$, and the limiting cases $\alpha=0$ and $\alpha=\infty$ correspond to perfect positive association and independence, respectively. One furthermore observes that the dependence is highly asymmetric, i.e.\ large values of $G(X_1),G(X_2)$ are more likely jointly close to each other than small values, which behave like independence. This effect can be quantified in terms of the so-called upper- and lower-tail dependence coefficients, given by
\begin{gather*}
\lim_{x  \rightarrow \infty}\IP( X_1>x\,|\,X_2>x\big) = \frac{2}{2+\alpha},\quad \lim_{x  \searrow 0}\IP( X_1 \leq x\,|\,X_2\leq x\big) = 0,
\end{gather*}
respectively.
\end{example}

\begin{figure}[!ht]
\caption{Left: $5000$ samples of $(G(X_1),G(X_2))$ for $\alpha=0.1$ in Example \ref{ex_ParetoUnif}. Right: $5000$ samples of $(G(X_1),G(X_2))$ for $\alpha=1$ in Example \ref{ex_ParetoUnif}.}
{\includegraphics[width=7cm]{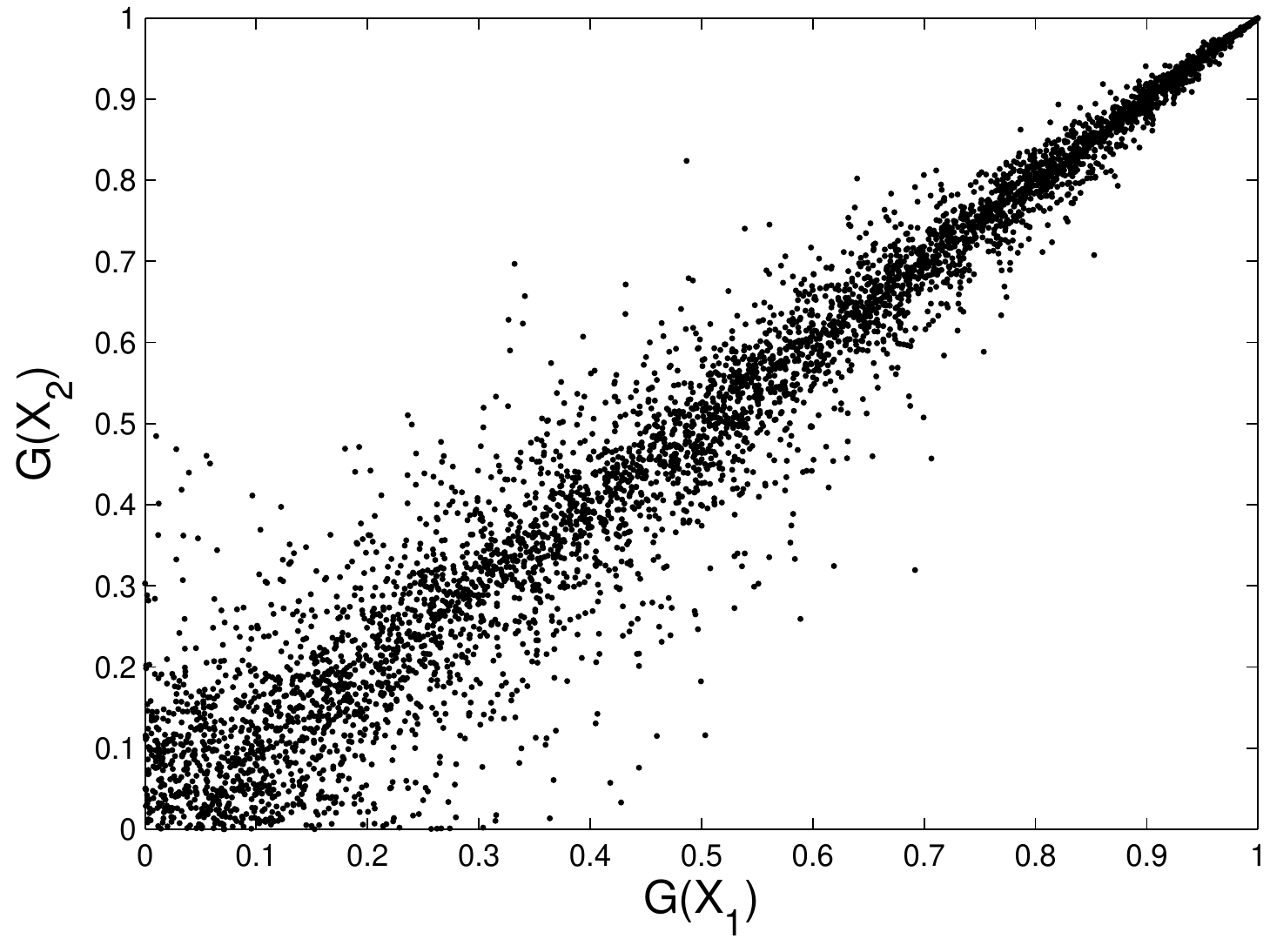} 
\includegraphics[width=7cm]{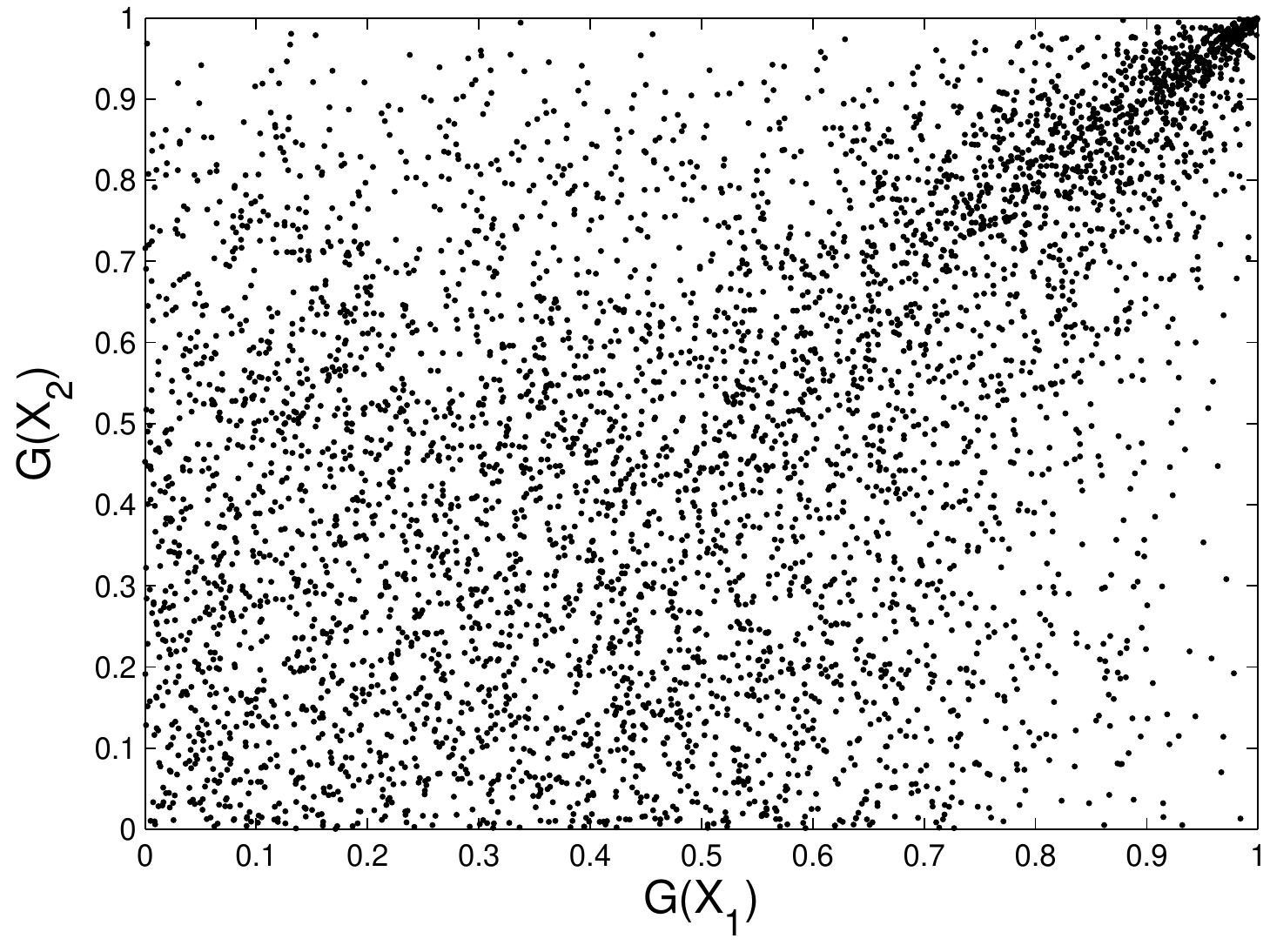}}
\label{fig:Pareto}
\end{figure}

\newpage

\section{The multivariate lack-of-memory property} \label{sec:lom}
A random vector $\bm{X}=(X_1,\ldots,X_d)$ with non-negative components is said to satisfy the \emph{(multivariate) lack-of-memory property} if for arbitrary $1 \leq i_1<\ldots<i_n \leq d$ we have that
\begin{align*}
&\IP(X_{i_1}>t_{i_1}+t,\ldots,X_{i_n}>t_{i_n}+t\,|\,X_{i_1}>t,\ldots,X_{i_n}>t)\\
& \qquad = \IP(X_{i_1}>t_{i_1},\ldots,X_{i_n}>t_{i_n}),
\end{align*}
with the $t,t_1,\ldots,t_d$ either in $(0,\infty)$ (continuous support case) or in $\IN_0$ (discrete support case). The lack-of-memory property is very intuitive when the $k$-th component $X_k$ of $\bm{X}$ is interpreted as the future time point at which the $k$-th component in a system of $d$ components fails. In words, it means that conditioned on the survival of an arbitrary sub-system $(i_1,\ldots,i_n)$ until time $t$, the residual lifetimes of the components $i_1,\ldots,i_n$ are identical in distribution to the lifetimes at inception of the system. Needless to mention that such intuitive property occupies a commanding role in reliability theory, see \cite{barlow75} for a textbook treatment, but is also important in other contexts such as financial risk management, e.g., \cite{giesecke03,lindskog03}. An alternative way to formulate the multivariate lack-of-memory property, due to \cite{brigo16}\label{brigoref}, is the following. For $1 \leq i_1<\ldots<i_n \leq d$ we denote by $Z_{i_1,\ldots,i_n}(t):=(1_{\{X_{i_1}>t\}},\ldots,1_{\{X_{i_n}>t\}})$, $t \geq 0$, the stochastic process which indicates for each of the $n$ components $i_1,\ldots,i_n$ whether it is still working or already dysfunctional. The random vector $\bm{X}$ has the lack-of-memory property if and only if $Z_{i_1,\ldots,i_n}$ is a continuous-time Markov chain for all $1 \leq i_1<\ldots<i_n \leq d$.
\par
From a theoretical point of view, studying the (multivariate) lack-of-memory property is also natural as it generalizes very popular one-dimensional probability distributions to the multivariate case. Indeed, if $d=1$ we abbreviate $X:=X_1$ and recall the following classical characterizations.
\begin{lemma}[Characterization of lack-of-memory for $d=1$]\label{lemma_lom}
$\mbox{}$
\begin{itemize}
\item[($\mathcal{E}$)] If the support of $X$ equals $[0,\infty)$, then $X$ satisfies the lack-of-memory property if and only if $X$ has an exponential distribution, that is $\IP(X>t)=\exp(-\lambda\,t)$ for some $\lambda>0$.
\item[($\mathcal{G}$)] If the support of $X$ equals $\IN$, then $X$ satisfies the lack-of-memory property if and only if $X$ has a geometric distribution, that is $\IP(X>n)=(1-p)^n$ for some $p \in (0,1)$.
\end{itemize}
\end{lemma}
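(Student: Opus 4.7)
The plan is to reduce both claims to the multiplicative Cauchy functional equation for the survival function $G(t) := \IP(X > t)$. The sufficiency direction in both cases is a two-line calculation: for $X$ exponential with rate $\lambda$, $\IP(X > t+s)/\IP(X > t) = e^{-\lambda s} = \IP(X > s)$, and $(1-p)^{n+m}/(1-p)^n = (1-p)^m$ in the geometric case. So the substance lies entirely in the necessity direction.

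For necessity I would first observe that in dimension $d = 1$ the multivariate lack-of-memory property collapses (only $n=1$, $i_1 = 1$ is possible) to
\begin{gather*}
\IP(X > t+s) = \IP(X > t)\,\IP(X > s),
\end{gather*}
that is, to $G(t+s) = G(t)\,G(s)$, where $s,t$ range over $[0,\infty)$ in case ($\mathcal{E}$) and over $\IN_0$ in case ($\mathcal{G}$). The assumption on the support of $X$ guarantees that $G(t) > 0$ on its entire domain and $G(1) \in (0,1)$, so $G$ is non-degenerate.

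Case ($\mathcal{G}$) is then immediate: induction on $n$ yields $G(n) = G(1)^n$ for every $n \in \IN_0$, and setting $p := 1 - G(1) \in (0,1)$ finishes the argument. In case ($\mathcal{E}$) I would pass to $\psi := -\log G$, turning the multiplicative equation into additivity $\psi(t+s) = \psi(t) + \psi(s)$ on $[0,\infty)$. The standard Cauchy argument then applies: $\psi(n) = n\,\psi(1)$ by induction, $q\,\psi(1/q) = \psi(1)$ yields $\psi(1/q) = \psi(1)/q$, and hence $\psi(r) = r\,\psi(1)$ for every positive rational $r$. Setting $\lambda := \psi(1) \in (0,\infty)$, the right-continuity of $G$ (a built-in property of every survival function, noted in the general-notation subsection of the paper) extends this identity from rationals to all of $[0,\infty)$, giving $G(t) = e^{-\lambda t}$.

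The only delicate point is the regularity step in case ($\mathcal{E}$): additivity alone on $[0,\infty)$ admits pathological, non-measurable solutions, but right-continuity is more than enough regularity to force the linear solution. The discrete case avoids this subtlety entirely because the functional equation lives on $\IN_0$, where induction suffices.
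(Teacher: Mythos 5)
Your proposal is correct and follows essentially the same route as the paper: induction on the functional equation $G(n)=G(1)^n$ in the geometric case, and the standard Cauchy-functional-equation argument (with right-continuity supplying the regularity) in the exponential case, which the paper simply delegates to Billingsley. The only difference is that you spell out the rationals-plus-right-continuity step explicitly, which is fine.
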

\begin{proof}
In the geometric case, inductively we see that $\bar{F}(n):=\IP(X>n)$ satisfies $\bar{F}(n)=\bar{F}(1)^n$, $n \in \IN_0$, an the claim follows with $p:=1-\bar{F}(1)$. Notice that $\bar{F}(1) \in \{0,1\}$ is ruled out by the assumption of support equal to $\IN$. The exponential case follows similarly, see \cite[p.\ 190]{billingsley95}.
\end{proof}

\subsection{Marshall-Olkin and multivariate geometric distributions}
The well known characterizations of univariate lack-of-memory in Lemma \ref{lemma_lom} have been lifted to the multivariate case in \cite{marshall67} and \cite{arnold75,mai13}, respectively, which is briefly recalled. First of all, we introduce the multivariate exponential models of \cite{marshall67} and \cite{arnold75}. To this end, we denote by $\mathcal{E}(\lambda)$ the univariate exponential law with rate $\lambda>0$, and by $\mathcal{G}(p)$ the univariate geometric distribution with parameter $p\in (0,1)$, i.e.\ with survival function $\IP(X>n)=(1-p)^n$. In order to include boundary cases, we denote by $\mathcal{E}(0),\mathcal{G}(0)$ the probability law of a degenerate random variable that is identically equal to infinity, and by $\mathcal{G}(1)$ the probability law of a degenerate random variable that is identically equal to one. 
\begin{example}[Probability laws with multivariate lack-of-memory]\label{ex_lom}
$\mbox{}$
\begin{itemize}
\item[($\mathcal{E}$)] For each non-empty $I \subset \{1,\ldots,d\}$ let $\lambda_I \geq 0$ with $\sum_{I\,:\,k \in I}\lambda_I>0$ for each $k=1,\ldots,d$. With $E_I \sim \mathcal{E}(\lambda_I)$ a list of $2^d-1$ independent random variables, we define $\bm{X}$ via
\begin{gather*}
X_k:=\min\{E_I\,:\,k \in I\},\quad k=1,\ldots,d.
\end{gather*}
Then $\bm{X}$ satisfies the multivariate lack-of-memory property, which is easy to see while noticing that the survival function of $\bm{X}$ equals
\begin{gather*}
\bar{F}(\bm{x})=\IP(\bm{X}>\bm{x}) = \exp\Big\{ -\sum_{\emptyset \neq I \subset \{1,\ldots,d\}}\lambda_I\,\max_{k \in I}\{x_k\}\Big\}.
\end{gather*}
\item[($\mathcal{G}$)] For each (possibly empty) $I \subset \{1,\ldots,d\}$ let $p_I \in [0,1]$ with $\sum_{I\,:\,k \notin I}p_I<1$ for each $k=1,\ldots,d$ and $\sum_{I}p_I=1$. The probabilities $p_I$ define a probability law on the power set of $\{1,\ldots,d\}$. Let $S_1,S_2,\ldots$ be an iid sequence drawn from this law and denote by $G_I$ the smallest $n \in \IN$ such that $S_n=I$. Notice that $G_I \sim \mathcal{G}(p_I)$. We define the random vector $\bm{X}$ with values in $\IN^d$ by
\begin{gather*}
X_k:=\min\{G_I\,:\,k \in I\},\quad k=1,\ldots,d.
\end{gather*}
Then $\bm{X}$ satisfies the multivariate lack-of-memory property. Furthermore, the survival function of $\bm{X}$ equals
\begin{gather*}
\bar{F}(\bm{n})=\IP(\bm{X}>\bm{n}) = \prod_{k=1}^{d}\,\Big( \sum_{I\,:\,\{\sigma_{\bm{n}}(k),\ldots,\sigma_{\bm{n}}(d)\}\cap I = \emptyset}p_I\Big)^{n_{[k]}-n_{[k-1]}},
\end{gather*}
where $\sigma_{\bm{n}}$ denotes a permutation of $\{1,\ldots,d\}$ such that $n_{\sigma_{\bm{n}}(1)} \leq \ldots \leq n_{\sigma_{\bm{n}}(d)}$, and $n_0:=0$, for $\bm{n}=(n_1,\ldots,n_d) \in \IN_0^d$.
\end{itemize}
\end{example}
The probability distribution in part ($\mathcal{E}$) of Example \ref{ex_lom} is called \emph{Marshall-Olkin distribution}. It is named after \cite{marshall67}. The probability distribution in part ($\mathcal{G}$) of Example \ref{ex_lom} is called \emph{wide-sense geometric distribution}. The stochastic model has been introduced in \cite{arnold75}. The presented form of the survival function is computed in \cite{mai13}. The following lemma shows that the multivariate stochastic models in Example \ref{ex_lom} define precisely the multivariate analogues of the univariate exponential and geometric laws, when defined via the lack-of-memory property. Thus, it constitutes a multivariate extension of Lemma \ref{lemma_lom}.
\begin{lemma}[Characterization of lack-of-memory for $d \geq 1$]
$\mbox{}$
\begin{itemize}
\item[($\mathcal{E}$)] The $d$-variate Marshall-Olkin distribution is the only probability law with support $[0,\infty)^d$ satisfying the lack-of-memory property.
\item[($\mathcal{G}$)] The $d$-variate wide-sense geometric distribution is the only probability law with support $\IN^d$ satisfying the lack-of-memory property.
\end{itemize}
\end{lemma}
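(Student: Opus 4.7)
The plan is to treat the continuous case $(\mathcal{E})$ in detail and indicate that the geometric case $(\mathcal{G})$ proceeds by the same pattern with obvious modifications. Since existence of distributions with the stated supports and the lack-of-memory (LOM) property is furnished by Example~\ref{ex_lom}, only the uniqueness assertion — that any such distribution must be of the form displayed there — is at stake.

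First, for each non-empty $I \subseteq \{1,\ldots,d\}$ I would apply the LOM property to the sub-vector $(X_k)_{k \in I}$ with all residual shifts equal to $s$, producing the multiplicative Cauchy equation
\[
\IP\big(\min_{k \in I} X_k > s+t\big) \,=\, \IP\big(\min_{k \in I} X_k > s\big)\,\IP\big(\min_{k \in I} X_k > t\big),\qquad s,t \geq 0.
\]
By Lemma~\ref{lemma_lom}$(\mathcal{E})$, this forces $\min_{k \in I} X_k \sim \mathcal{E}(\lambda_I)$ for some $\lambda_I \in (0,\infty)$, with the degenerate boundary values excluded by the full-support hypothesis on $\bm{X}$. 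Next, for arbitrary $\bm{x} \in [0,\infty)^d$, let $\pi$ be a permutation with $x_{\pi(1)} \leq \cdots \leq x_{\pi(d)}$. Applying the LOM property to the full vector with common shift $t = x_{\pi(1)}$ and residual shifts $t_k = x_k - x_{\pi(1)}$ (so $t_{\pi(1)} = 0$) yields
\[
\bar{F}(\bm{x}) \,=\, e^{-\lambda_{\{1,\ldots,d\}}\,x_{\pi(1)}} \cdot \bar{F}_{-\pi(1)}\big(x_k - x_{\pi(1)},\ k \neq \pi(1)\big),
\]
with $\bar{F}_{-\pi(1)}$ the marginal survival function of the $(d{-}1)$-dimensional sub-vector $(X_k)_{k \neq \pi(1)}$, which itself inherits the LOM property. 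Induction on $d$ then delivers the piecewise formula
\[
\bar{F}(\bm{x}) \,=\, \exp\Big\{-\sum_{j=1}^{d} \lambda_{\{\pi(j),\ldots,\pi(d)\}}\,\big(x_{\pi(j)} - x_{\pi(j-1)}\big)\Big\},\qquad x_{\pi(0)} := 0.
\]

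The final step is to reconcile this piecewise expression (depending on the permutation $\pi$) with the Marshall--Olkin survival function $\exp\{-\sum_{I} \alpha_I \max_{k \in I} x_k\}$. Expanding $\sum_I \alpha_I \max_{k \in I} x_k = \sum_{j=1}^d (x_{\pi(j)} - x_{\pi(j-1)}) \sum_{I\,:\,I \cap \{\pi(j),\ldots,\pi(d)\} \neq \emptyset} \alpha_I$ on each such region and matching coefficients reduces the identification to the linear system $\lambda_J = \sum_{I\,:\,I \cap J \neq \emptyset} \alpha_I$ for every non-empty $J$, which Möbius inversion solves uniquely for the $\alpha_I$. The main obstacle — and arguably the only genuinely subtle point of the proof — is verifying $\alpha_I \geq 0$. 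I would handle this by computing, via inclusion-exclusion on $\bar{F}$, the probability of the event $\{X_k \leq \epsilon\text{ for all } k \in I;\ X_k > \epsilon\text{ for all } k \notin I\}$ for small $\epsilon > 0$: a short combinatorial calculation shows that only the single summand with $J = I$ survives at the leading order, producing the Taylor expansion $\alpha_I\,\epsilon + O(\epsilon^2)$, whence non-negativity of probabilities forces $\alpha_I \geq 0$.

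The discrete case $(\mathcal{G})$ follows the same three-step programme: Lemma~\ref{lemma_lom}$(\mathcal{G})$ identifies each sub-vector minimum $\min_{k \in I} X_k$ as geometric with some parameter; the LOM property with integer shifts peels off the smallest coordinate recursively to produce a chamberwise product formula for $\bar{F}$; and a parallel Möbius inversion together with a discrete finite-difference non-negativity check matches this with the wide-sense geometric representation of Example~\ref{ex_lom}$(\mathcal{G})$.
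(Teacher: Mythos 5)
Your proposal is correct, but it cannot be compared step-by-step with the paper, because the paper does not prove this lemma at all: it simply cites \cite{marshall67} for part ($\mathcal{E}$) and \cite{mai13} for part ($\mathcal{G}$). What you supply is a self-contained reconstruction in the spirit of those references: the subvector lack-of-memory equations with equal shifts reduce, via the Cauchy functional equation, to exponential (resp.\ geometric) minima $\min_{k\in I}X_k\sim\mathcal{E}(\lambda_I)$; peeling off the smallest coordinate and inducting on $d$ yields the chamberwise formula; and M\"obius inversion over the subset lattice produces unique candidate parameters $\alpha_I$, whose non-negativity is indeed the only delicate point. I checked your key computation: with $\lambda_J=\sum_{I:I\cap J\neq\emptyset}\alpha_I$, inclusion--exclusion gives $\IP(X_k\le\epsilon\ \forall k\in I,\ X_k>\epsilon\ \forall k\notin I)=\sum_{K\subseteq I}(-1)^{|K|}e^{-\lambda_{K\cup I^c}\,\epsilon}$, the zeroth-order terms cancel, and in the first-order term only the contribution of the set $I$ itself survives, so the probability equals $\alpha_I\,\epsilon+O(\epsilon^2)$ and $\alpha_I\ge 0$ follows; in the discrete case no limit is even needed, since the same combinatorics shows that the candidate $p_I$ equals exactly $\IP(X_k=1\ \forall k\in I,\ X_k>1\ \forall k\notin I)$, and $\sum_I p_I=1$ with $\sum_{I:k\notin I}p_I=\IP(X_k>1)<1$ comes for free. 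Two small points you should make explicit: the paper's definition of lack-of-memory takes the shifts in $(0,\infty)$ (resp.\ $\IN_0$), so your application with residual shift $t_{\pi(1)}=0$ in the continuous case needs a one-line right-continuity limit; and ``support $[0,\infty)^d$'' (resp.\ $\IN^d$) must be read as full support, which is what rules out the degenerate minima ($\lambda_I\in\{0,\infty\}$), guarantees $\IP(X_{\pi(1)}=0)=0$ when you identify the shifted survival function with a lower-dimensional margin, and yields $\sum_{I\ni k}\alpha_I>0$ so that the recovered parameters are admissible in Example \ref{ex_lom}.
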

\begin{proof}
Part ($\mathcal{E}$) is due to the original reference \cite{marshall67}, while part ($\mathcal{G}$) is shown in \cite{mai13}.
\end{proof}

\begin{example}[Narrow-sense geometric law]\label{ex_narrowgeo}
If $\bm{Y}$ has a Marshall-Olkin distribution and we define $\bm{X}:=(\lceil Y_1 \rceil,\ldots,\lceil Y_d \rceil)$, then $\bm{X}$ is said to have the \emph{narrow-sense geometric distribution}. As the nomenclature suggests, the narrow-sense geometric distribution is a subset of the wide-sense geometric distribution in dimensions $d \geq 2$ (and identical for $d=1$), which is very easy to see by the characterizing lack-of-memory property of the Marshall-Olkin law. Not every wide-sense geometric law can be constructed like this, i.e.\ the narrow-sense family defines a proper subset of the wide-sense family. This indicates that for $d \geq 2$ the structure of the discrete lack-of-memory property is more delicate than the structure of its continuous counterpart. For example, while two components of a random vector with Marshall-Olkin distribution or narrow-sense geometric distribution cannot be negatively correlated, two components of a random vector with wide-sense geometric distribution can be, see \cite{mai13} for details.  
\end{example}

\subsection{Infinite divisibility and L\'evy subordinators} \label{rmk_ID}
The concept of infinite divisibility is of fundamental importance in the present section, but also in Sections \ref{sec:evd} and \ref{sec:exshock} below. Thus, we briefly recall the required background in the present paragraph. For an elaborate textbook treatment we refer to \cite{sato99}. The concept of a L\'evy subordinator plays an essential role when studying the conditionally iid subfamily of the Marshall-Olkin distribution, a result first discovered in \cite{maischerer11}. Recall that a c\`adl\`ag stochastic process $Z=\{Z_t\}_{t \geq 0}$ with $Z_0=0$ is called a \emph{L\'evy process} if it has stationary and independent increments, which means that:
\begin{itemize}
\item[(i)] The law of $Z_{t+h}-Z_t$ is independent of $t \geq 0$ for each $h \geq 0$, i.e.\ $Z_{t+h}-Z_t \stackrel{d}{=}Z_h$ .
\item[(ii)] $Z_{t_2}-Z_{t_1},\ldots,Z_{t_n}-Z_{t_{n-1}}$ are independent for $0 \leq t_1<\ldots<t_n$.
\end{itemize}
Hence, L\'evy processes are the continuous-time equivalents of discrete-time random walks. A non-decreasing L\'evy process is called a \emph{L\'evy subordinator}. However, there is one fundamental difference between a random walk and a L\'evy process: the probability law of the increments in a random walk is arbitrary on $\IR$, whereas the law of the increments in a L\'evy process need to satisfy a certain compatibility condition with respect to time, as increments of arbitrarily large time span can be considered. Concretely, it is immediate from the definition of a L\'evy process $Z=\{Z_t\}_{t \geq 0}$ that the probability law of $Z_1$ is infinitely divisible. Recall that a random variable $X$ is called \emph{infinitely divisible} if for each $n\in \IN$ there exist iid random variables $X_1^{(n)},\ldots,X_n^{(n)}$ such that $X\stackrel{d}{=}X^{(n)}_1+\ldots+X_n^{(n)}$. Furthermore, if $X$ has an infinitely divisible probability law, there exists a L\'evy process $Z=\{Z_t\}_{t \geq 0}$, which is uniquely determined in law, such that $Z_1 \stackrel{d}{=} X$. As a consequence, a L\'evy subordinator $Z$ is uniquely determined in distribution by the law of $Z_1$, or analytically by the function $\Psi(x):=-\log(\IE[\exp(-x\,Z_1)])$, $x \geq 0$. One calls $\Psi$ the \emph{Laplace exponent} of the infinitely divisible random variable $Z_1$ (or of the L\'evy subordinator $Z$). The function $\Psi$ is a so-called \emph{Bernstein function}, which means that it is infinitely often differentiable on $(0,\infty)$ and the derivative $\Psi^{(1)}$ is completely monotone, i.e.\ $(-1)^{k+1}\,\Psi^{(k)} \geq 0$ for all $k \geq 1$, see \cite{berg84,schilling10} for textbook treatments on the topic. The value $\Psi(0)$ is by definition equal to zero but we might have a jump at zero meaning that $\Psi(x)>\epsilon>0$ for all $x>0$ is possible. Intuitively, this is the case if and only if $\IP(Z_t=\infty)>0$ for $t>0$, and in this case one sometimes also speaks of a \emph{killed} L\'evy subordinator. 

\subsection{Analytical characterization of exchangeability and conditionally iid}
By Lemma \ref{lemma_exchangeable} a random vector $\bm{X}$ with either Marshall-Olkin distribution or wide-sense geometric distribution can only be conditionally iid if it is exchangeable. An elementary computation shows that the Marshall-Olkin distribution (resp.\ wide-sense geometric distribution) is exchangeable if and only if its parameters $\lambda_I$ (resp.\ $p_I$) depend on the indexing subsets $I$ only through their cardinality $|I|$. In this exchangeable case, we denote these parameters by $\lambda_1,\ldots,\lambda_d$ (resp.\ $p_0,p_1,\ldots,p_d$), with subindices denoting the possible cardinalities, i.e.\ $\lambda_k:=\lambda_{\{1,\ldots,k\}}$ and $p_k:=p_{\{1,\ldots,k\}}$, and combinatorial computations show that the survival function $\bar{F}$ of $\bm{X}$ takes the convenient algebraic form
\begin{gather}
\bar{F}(\bm{x}) = \prod_{k=1}^{d}b_k^{x_{[d-k+1]}-x_{[d-k]}},\quad \bar{F}(\bm{n}) = \prod_{k=1}^{d}b_k^{n_{[d-k+1]}-n_{[d-k]}},
\label{sf_lom}
\end{gather}
for either $x_1,\ldots,x_d \in [0,\infty)$ with $x_0:=0$ (in the Marshall-Olkin case) or $n_1,\ldots,n_d \in \IN_0$ with $n_0:=0$ (in the wide-sense geometric case), and with\footnote{The empty product is conveniently defined to be equal to one, i.e\ $\prod_{i=1}^{0}:=1$.}
\begin{align}
b_k &:= \prod_{i=1}^{k}\exp\Big\{- \sum_{j=0}^{d-i}\binom{d-i}{j}\,\lambda_{j+1} \Big\},  & \mbox{(Marshall-Olkin case)} \label{reparamexp}\\
b_k &:= \sum_{i=0}^{d-k}\binom{d-k}{i}\,p_{i}, & \mbox{(wide-sense geometric case)},\label{reparamgeo}
\end{align}
for $k=0,\ldots,d$. While the parameters $\lambda_k$ (resp.\ $p_k$) are intuitive since they allow for the probabilistic interpretations according to Example \ref{ex_lom}, the re-parameterization in terms of the new parameters $b_k$ is more convenient with regards to finding an answer to the question: when is $\bm{X}$ conditionally iid? The main result in this regard is stated in Theorem \ref{thm_ciidlom} below, which requires the notion of $d$-monotone sequences and log-$d$-monotone sequences. The concept of $d$-monotonicity as well as the notations $\mathcal{M}_d$ and $\mathcal{M}_{\infty}$ have already been introduced in paragraph \ref{subsec_hausdorff}, the related concept of log-$d$-monotonicity is introduced in the following definition.

\begin{definition}[Log- monotone sequences]
For $d \in \IN$, a finite sequence $(b_0,b_1,\ldots,b_{d}) \in (0,\infty)^{d+1}$ is said to be \emph{log-$d$-monotone} if $\nabla^{d-k}\log(b_k) \geq 0$ for $k=0,1,\ldots,d-1$. An infinite sequence $\{b_k\}_{k \in \IN_0}$ with positive members is said to be \emph{completely log-monotone} if $(b_0,\ldots,b_{d})$ is log-$d$-monotone for each $d \geq 1$.
\end{definition}
The notion of a log-$d$-monotone sequence is less intuitive than that of a $d$-monotone sequence. First notice that, in contrast to the definition of a $d$-monotone sequence in paragraph \ref{subsec_hausdorff}, $\log(b_{d}) \geq 0$ needs not hold for a log-$d$-monotone sequence, which is explained by the following useful relationship between $(d-1)$-monotonicity and log-$d$-monotonicity. It helps to transform statements involving log-$d$-monotonicity into statements involving only the simpler notion of $(d-1)$-monotonicity\footnote{This statement simply follows from the fact that $\log(1)=0$ and $\nabla^{d-k-1}b_k = \nabla^{d-k}\log(\tilde{b}_k)$ with $\tilde{b}_k:=\exp(-\sum_{i=0}^{k-1}b_i)$ for $k=0,\ldots,d$, with an empty sum being conveniently defined as zero.}:
\begin{align}
& (b_0,\ldots,b_{d-1}) \; (d-1)\mbox{-monotone }\nonumber\\
& \quad \Leftrightarrow \Big( 1,e^{-b_0},e^{-(b_0+b_1)},\ldots,e^{-\sum_{i=0}^{d-1}b_i}\Big)\mbox{ log-}d\mbox{-monotone}.
\label{corresplogdandd}
\end{align}
The set of all log-$d$-monotone sequences starting with $b_0=1$ will be denoted by $\mathcal{LM}_d$ in the following. Similarly, $\mathcal{LM}_{\infty}$ denotes the sets of completely log-monotone sequences starting with $b_0=1$. \cite[Proposition 4.4]{mai13} shows that $\{b_k\}_{k \in \IN} \in \mathcal{LM}_{\infty}$ if and only if $\{b_k^t\}_{k \in \IN} \in \mathcal{M}_{\infty}$ for arbitrary $t>0$. In particular, $\mathcal{LM}_{\infty} \subset \mathcal{M}_{\infty}$. Theorem \ref{thm_ciidlom} below provides a second result, besides Theorem \ref{thm_definetti01}, showing that whether or not a (log-) $d$-monotone sequence can be extended to a completely (log-) monotone sequence plays an important role in the context of the present survey. 
\par
In order to better understand the following theorem it is helpful to know that the Laplace exponent $\Psi$ of a L\'evy subordinator $Z$ is already completely determined by its values on $\IN$, i.e.\ by the sequence $\{\Psi(k)\}_{k \in \IN_0}$. Furthermore, the sequence $\{\exp(-\Psi(k))\}_{k \in \IN_0}$ equals the moment sequence of the random variable $\exp(-Z_1)$, so lies in $\mathcal{M}_{\infty}$ by the little moment problem, see paragraph \ref{subsec_hausdorff}. Since for arbitrary $t>0$ even the sequence $\{\exp(-t\,\Psi(k))\}_{k \in \IN_0}$ lies in $\mathcal{M}_{\infty}$ as the moment sequence of $\exp(-Z_t)$, the sequence $\{\exp(-\Psi(k))\}_{k \in \IN_0}$ even lies in the smaller set $\mathcal{LM}_{\infty}$ of completely log-monotone sequences. The subset $\mathcal{LM}_{\infty} \subsetneq \mathcal{M}_{\infty}$ corresponds to precisely the infinitely divisible laws on $[0,\infty]$, which is the discrete analogue of the well known statement that $\exp(-t\,\Psi)$ is a completely monotone function for arbitrary $t>0$ if and only if $\Psi^{'}$ is completely monotone. With this information and the information of paragraph \ref{rmk_ID} as background the following theorem is now quite intuitive. 
\par
Theorem \ref{thm_ciidlom} solves Problem \ref{motivatingproblemrefined} for the property (P) of ``satisfying the multivariate lack-of-memory property''.

\begin{theorem}[Lack-of-memory, exchangeability and conditionally iid]\label{thm_ciidlom}
$\mbox{}$
\begin{itemize}
\item[($\mathcal{E}$)] The function (\ref{sf_lom}) is a survival function (of some $\bm{X}$) with support $[0,\infty)^d$ if and only if we have $(b_0,\ldots,b_d) \in \mathcal{LM}_{d}$. Furthermore, the associated exchangeable Marshall-Olkin distribution admits a stochastic representation that is conditionally iid if there exist $b_{d+1},b_{d+2},\ldots$ such that $\{b_k\}_{k \in \IN_0} \in \mathcal{LM}_{\infty}$. To wit, in this case there exists a (possibly killed) L\'evy subordinator $Z=\{Z_t\}_{t \geq 0}$, determined in law via
\begin{gather}
b_k:=\IE\Big[ e^{-k\,Z_1}\Big],\quad k \in \IN_0,
\label{corresp_levy_logcm}
\end{gather}
such that $\bm{X}$ has the same distribution as the vector defined in (\ref{canonical_construction_2}).
\item[($\mathcal{G}$)] The function (\ref{sf_lom}) is a survival function (of some $\bm{X}$) with support $\IN^d$ if and only if we have $(b_0,b_1,\ldots,b_d) \in \mathcal{M}_{d}$. Furthermore, the associated exchangeable wide-sense geometric distribution admits a stochastic representation that is conditionally iid if there exist $b_{d+1},b_{d+2},\ldots$ such that $\{b_k\}_{k \in \IN_0} \in \mathcal{M}_{\infty}$. To wit, in this case there exists an iid sequence $Y_1,Y_2,\ldots$ of random variables taking values in $[0,\infty]$, determined in law via
\begin{gather}
b_k:=\IE\Big[ e^{-k\,Y_1}\Big],\quad k \in \IN_0,
\label{corresp_littlemoment}
\end{gather}
such that $\bm{X}$ has the same distribution as the vector defined in (\ref{canonical_construction_2}) when
\begin{gather*}
Z_t:=Y_1+Y_2+\ldots+Y_{\lfloor t \rfloor},\quad t \geq 0.
\end{gather*}
\end{itemize}
\end{theorem}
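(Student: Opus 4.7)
The plan is to treat the analytical characterization first and then the conditionally iid assertion in each case. In case ($\mathcal{G}$), the identity (\ref{reparamgeo}) is an invertible binomial transform between the cardinality-indexed probabilities $p_0,\ldots,p_d$ and the sequence $b_0,\ldots,b_d$, with inverse $p_k=\nabla^{d-k}b_k$ for $k=0,\ldots,d$. Hence validity of (\ref{sf_lom}) as a survival function on $\IN^d$, which amounts to $p_I\ge 0$ for every $I\subset\{1,\ldots,d\}$ together with $\sum_I p_I=1$ (i.e.\ $b_0=1$), is equivalent to $(b_0,\ldots,b_d)\in\mathcal{M}_d$. For case ($\mathcal{E}$) I would reduce to the previous calculation via the correspondence (\ref{corresplogdandd}). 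Writing $\mu_k:=-\log(b_k/b_{k-1})$ for $k=1,\ldots,d$, the reparameterization (\ref{reparamexp}) takes the form $\mu_k=\sum_{j=0}^{d-k}\binom{d-k}{j}\lambda_{j+1}$, an instance of the same binomial transform, so non-negativity of every $\lambda_I$ is equivalent to $(d-1)$-monotonicity of $(\mu_1,\ldots,\mu_d)$, which by (\ref{corresplogdandd}) is in turn equivalent to $(b_0,\ldots,b_d)\in\mathcal{LM}_d$.

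For the conditionally iid part in case ($\mathcal{G}$), an extension $\{b_k\}_{k\in\IN_0}\in\mathcal{M}_\infty$ yields via Hausdorff's moment problem (paragraph \ref{subsec_hausdorff}) a random variable $M\in[0,1]$ with $b_k=\IE[M^k]$. Setting $Y_1:=-\log M\in[0,\infty]$, taking iid copies $Y_2,Y_3,\ldots$, and plugging $Z_t:=\sum_{n=1}^{\lfloor t\rfloor}Y_n$ into the canonical construction (\ref{canonical_construction_2}), I condition on $Z$ and use independence of increments together with the telescoping identity $\sum_k Z_{n_k}=\sum_{k=1}^{d}(d-k+1)(Z_{n_{[k]}}-Z_{n_{[k-1]}})$, where $n_{[0]}:=0$, to obtain
\begin{align*}
\IP(\bm{X}>\bm{n}) &= \IE\Big[\exp\Big(-\sum_{k=1}^{d}(d-k+1)\bigl(Z_{n_{[k]}}-Z_{n_{[k-1]}}\bigr)\Big)\Big]\\
&= \prod_{k=1}^{d} b_{d-k+1}^{n_{[k]}-n_{[k-1]}} = \prod_{k=1}^{d} b_k^{n_{[d-k+1]}-n_{[d-k]}},
\end{align*}
which matches (\ref{sf_lom}) after reindexing.

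Case ($\mathcal{E}$) runs in parallel. An extension $\{b_k\}\in\mathcal{LM}_\infty$ is exactly the statement that $\{b_k^t\}_{k\in\IN_0}\in\mathcal{M}_\infty$ for every $t\ge 0$, so Hausdorff's moment problem produces a consistent one-parameter family of random variables $M_t\in[0,1]$ with $b_k^t=\IE[M_t^k]$. The law of $Z_1:=-\log M_1$ is then infinitely divisible in the sense of paragraph \ref{rmk_ID}, so there exists a (possibly killed) L\'evy subordinator $Z=\{Z_t\}_{t\ge 0}$ with $\IE[e^{-k Z_t}]=b_k^t$ for all $k$ and $t$. The identical telescoping computation, now exploiting stationary and independent increments in continuous time, recovers (\ref{sf_lom}) for $\bm{X}$ defined by (\ref{canonical_construction_2}).

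The main obstacle is case ($\mathcal{E}$): both its nested exponential-plus-binomial structure in (\ref{reparamexp}) and the need for a bona fide continuous-time latent process (rather than a single random parameter) separate it from the discrete case. The two decisive inputs that resolve these hurdles are the elementary bridge (\ref{corresplogdandd}) between log-$d$-monotonicity of $(b_0,\ldots,b_d)$ and ordinary $(d-1)$-monotonicity of the log-increment sequence, and the recalled fact that complete log-monotonicity of $\{b_k\}$ is exactly the moment-sequence shadow of infinite divisibility on $[0,1]$, which reduces the construction of the L\'evy subordinator to the standard correspondence reviewed in paragraph \ref{rmk_ID}.
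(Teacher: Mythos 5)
Your proposal is correct and takes essentially the same route as the paper's proof: the binomial-transform correspondence between the cardinality-indexed parameters and the $b_k$ in the wide-sense geometric case, the reduction of the Marshall--Olkin case to it via (\ref{corresplogdandd}) applied to the log-increments $-\log(b_k/b_{k-1})$, and the little moment problem together with complete log-monotonicity $\leftrightarrow$ infinite divisibility to produce the random walk resp.\ the (possibly killed) L\'evy subordinator. Your explicit telescoping computation verifying (\ref{sf_lom}) for the canonical construction (\ref{canonical_construction_2}) merely fills in details the paper delegates to the cited references.
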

\begin{proof}
Part ($\mathcal{E}$) is due to \cite{maischerer09,maischerer11}, while part ($\mathcal{G}$) is due to \cite{mai13}. 
\par
First, we observe that once the correspondence between $\mathcal{M}_{d}$ and the wide-sense geometric law is established, the correspondence between $\mathcal{LM}_{d}$ and the narrow-sense geometric law (or, algebraically equivalent, its continuous counterpart the Marshall-Olkin law) follows from (\ref{corresplogdandd}) together with (\ref{reparamexp}) and (\ref{reparamgeo}). This is because the $\lambda_j$ in (\ref{reparamexp}) are arbitrary non-negative numbers, and the $p_i$ in (\ref{reparamgeo}) are also arbitrary non-negative up to scaling (i.e.\ with an additional scale factor $c>0$ we have that $c\,(p_0,\ldots,p_{d-1})$ and $(\lambda_1,\ldots,\lambda_d)$ both run through all of $[0,\infty)^d \setminus \{(0,\ldots,0)\}$, noticing that $p_d$ is determined by $p_0,\ldots,p_{d-1}$). Concretely, by the correspondence between $\mathcal{M}_{d}$ and the wide-sense geometric law, we obtain a correspondence between $\mathcal{M}_{d}$ and $[0,\infty)^{d}\setminus \{(0,\ldots,0)\}$ up to scaling in (\ref{reparamgeo}). In particular, the property of being $d$-monotone is not affected by $c$. Replacing the $\lambda_j$ in (\ref{reparamexp}) by $c\,p_{j-1}$ and making use of (\ref{corresplogdandd}), we then end up with the correspondence between $\mathcal{LM}_{d}$ and the Marshall-Olkin law. To establish the correspondence between $\mathcal{M}_{d}$ and the wide-sense geometric law is really only a tedious algebraic computation, see \cite{mai13} for details. Essentially, $d$-monotonicity enters the scene precisely for the same reason as in paragraph \ref{subsec_hausdorff}.
\par
Regarding the conditionally iid subfamliy, the crucial insight is that $\mathcal{M}_{\infty}$ stands in one-to-one relation with the set of probability measures on $[0,\infty]$ via (\ref{corresp_littlemoment}), which is exactly the well-known statement of the little moment problem, only formulated for the compact interval $[0,\infty]$ instead of the more usual interval $[0,1]$ via the transformation $-\log$. That the (discrete) random walk construction in part $(\mathcal{G})$ can only be ``made continuous'' in case $Y_1$ is infinitely divisible is very intuitive, and the L\'evy subordinator in part $(\mathcal{E})$ is simply the continuous analogue of the discrete random walk in that case.  
\end{proof}

Since the narrow-sense geometric law of Example \ref{ex_narrowgeo} is a special case of the wide-sense geometric law, it follows that $\mathcal{LM}_d \subsetneq \mathcal{M}_d$, which in fact is not an obvious statement. Furthermore, $\bm{X}$ in part ($\mathcal{G}$) of Theorem \ref{thm_ciidlom} happens to be narrow-sense geometric if and only if the random variable $Y_1$ is infinitely divisible. In fact, the elements of $\mathcal{LM}_{\infty}$ stand in one-to-one correspondence with the family of infinitely divisible laws on $[0,\infty]$ via (\ref{corresp_levy_logcm}), whereas the elements of the larger set $\mathcal{M}_{\infty}$ stand in one-to-one correspondence with the family of arbitrary probability laws on $[0,\infty]$ via (\ref{corresp_littlemoment}), which is just a slight re-formulation of the little moment problem. 

\begin{remark}[Analytical criterion for conditionally iid]\label{remk_truncMOM}
Given an exchangeable random vector $\bm{X}$ with lack-of-memory property and parameters $(b_0,\ldots,b_d)$, Theorem \ref{thm_ciidlom} implies that $\bm{X}$ has a stochastic representation that is conditionally iid if $(b_0,\ldots,b_d)$ can be extended to a completely (log-) monotone sequence. Using (\ref{corresplogdandd}), an element $(b_0,\ldots,b_{d}) \in \mathcal{LM}_{d}$ is extendible to an element in $\mathcal{LM}_{\infty}$ if and only if the $(d-1)$-monotone sequence $(-\log(b_1/b_0),\ldots,-\log(b_d/b_{d-1}))$ is extendible to a completely monotone sequence. Thus, we can concentrate on the completely monotone case. Deciding whether a $d$-monotone sequence can be extended to a completely monotone sequence is the truncated Hausdorff moment problem again, see Section \ref{subsec_hausdorff}. This means that an effective analytical criterion for extendibility is known.
\end{remark}

The following example demonstrates how a parameter sequence $\{b_k\}_{k \in \IN_0}$ for some wide-sense geometric law is conveniently defined via the link to the little moment problem, setting $b_k:=\IE[X^k]$, $k \in \IN_0$, where $X$ is some arbitrary random variable taking values in $[0,1]$.

\begin{example}[A two-parametric family based on the Beta distribution]
Consider a random variable $X$ with density
\begin{gather*}
f_X(x) = \frac{\Gamma(p+q)}{\Gamma(p)\,\Gamma(q)}\,x^{p-1}\,(1-x)^{q-1},\quad 0<x<1,
\end{gather*}
with parameters $p,q>0$, which is a Beta distribution. The moment sequence is known to be\footnote{See, e.g., \cite[p.\ 35]{gupta04}.}
\begin{gather*}
\IE[X^k] = \int_0^{1}f_X(x)\,x^k\,\mathrm{d}x=\frac{\Gamma(p+k)\,\Gamma(p+q)}{\Gamma(p)\,\Gamma(p+q+k)},\quad k \in \IN_0,
\end{gather*}
so that a two-parametric family of $d$-variate wide-sense geometric survival functions (for arbitrary $d \geq 1$) is given by
\begin{gather*}
\bar{F}_{p,q}(\bm{n}) = \Big(\frac{\Gamma(p+q)}{\Gamma(p)}\Big)^{n_{[d]}}\prod_{k=1}^{d}\Big(\frac{\Gamma(p+k)}{\Gamma(p+q+k)}\Big)^{n_{[d-k+1]}-n_{[d-k]}},\quad \bm{n} \in \IN_0^d.
\end{gather*}
The associated probability distribution of $Y_1$ in Theorem \ref{thm_ciidlom}($\mathcal{G}$) is given by $Y_1 \stackrel{d}{=}-\log(X)$, i.e.\ the logarithm of the reciprocal of the Beta distribution in concern. Similarly, making use of (\ref{corresplogdandd}), a two-parametric family of $d$-variate Marshall-Olkin survival functions (for arbitrary $d \geq 1$) is given by
\begin{align*}
\bar{F}_{p,q}(\bm{x})&= \exp\Big\{-\frac{\Gamma(p+q)}{\Gamma(p)}\,\sum_{k=1}^{d} (x_{[d-k+1]}-x_{[d-k]})\,\sum_{i=0}^{k-1}\frac{\Gamma(p+i)}{\Gamma(p+q+i)}\Big\}\\
& = \exp\Big\{-\frac{\Gamma(p+q)}{\Gamma(p)}\,\sum_{k=1}^{d} \frac{\Gamma(p+k-1)}{\Gamma(p+q+k-1)}\,x_{[d-k+1]}\Big\},\quad \bm{x} \in [0,\infty)^d.
\end{align*}
In the special case when $q=2$, the L\'evy subordinator in Theorem \ref{thm_ciidlom}($\mathcal{E}$) is of compound Poisson type with intensity $p+1$ and jumps that are exponentially distributed with parameter $p$.
\end{example}

\newpage

\section{Max-/ min-stable laws and extreme-value copulas}\label{sec:evd}
Throughout this paragraph, for the sake of a more compact notation we implicitly make excessive use of the abbreviations $f(0):=\lim_{x \searrow 0}f(x)$ and $f(\infty):=\lim_{x \rightarrow \infty}f(x)$ for functions $f:(0,\infty) \rightarrow (0,\infty)$, provided the respective limits exist in $[0,\infty]$. 
\subsection{Max-/ min-stability and multivariate extreme-value theory}
\begin{definition}[Max- and min-stability]
We denote by $F$ (resp.\ $\bar{F}$) the $d$-variate distribution function (resp.\ survival function) of some $d$-dimensional random vector $\bm{Y}=(Y_1,\ldots,Y_d)$ (resp.\ $\bm{X}=(X_1,\ldots,X_d)$). 
\begin{itemize}
\item[(a)] (The probability law of) $\bm{Y}$ is said to be \emph{max-stable} if for arbitrary $t>0$ there are $\alpha_i(t)>0$, $\beta_i(t) \in \IR$ such that 
\begin{align*}
F(\bm{x})^t=F\big( \alpha_1(t)\,x_1+\beta_1(t),\ldots,\alpha_d(t)\,x_d+\beta_d(t)\big).
\end{align*}
In this case, we also say that $F$ is \emph{max-stable}. In words, $F^t$ is again a distribution function and equals $F$ modulo a linear transformation of its arguments.
\item[(b)] (The probability law of) $\bm{X}$ is said to be \emph{min-stable} if for arbitrary $t>0$ there are $\alpha_i(t)>0$, $\beta_i(t) \in \IR$ such that 
\begin{align*}
\bar{F}(\bm{x})^t=\bar{F}\big( \alpha_1(t)\,x_1+\beta_1(t),\ldots,\alpha_d(t)\,x_d+\beta_d(t)\big).
\end{align*}
In this case, we also say that $\bar{F}$ is \emph{min-stable}. In words, $\bar{F}^t$ is again a survival function and equals $\bar{F}$ modulo a linear transformation of its arguments.
\end{itemize}
\end{definition}
If $\bm{Y}$ is max-stable and $\bm{Y}^{(i)}$ are independent copies of $\bm{Y}$, then for arbitrary $n \in \IN$ we observe 
\begin{align*}
\bm{Y} \stackrel{d}{=}\Big(\frac{\max_{i=1}^{n}\{Y^{(i)}_1\}-\beta_1(1/n)}{\alpha_1(1/n)},\ldots,\frac{\max_{i=1}^{n}\{Y^{(i)}_d\}-\beta_d(1/n)}{\alpha_d(1/n)}\Big).
\end{align*}
Similarly, if $\bm{X}$ is min-stable this means
\begin{align*}
\bm{X} \stackrel{d}{=}\Big(\frac{\min_{i=1}^{n}\{X^{(i)}_1\}-\beta_1(1/n)}{\alpha_1(1/n)},\ldots,\frac{\min_{i=1}^{n}\{X^{(i)}_d\}-\beta_d(1/n)}{\alpha_d(1/n)}\Big).
\end{align*}
In words, the component-wise re-scaled maxima of iid copies of $\bm{Y}$ (resp.\ minima of iid copies of $\bm{X}$) have the same distribution as $\bm{Y}$ (resp.\ $\bm{X}$). 
\par
Max and min-stability play a central role in multivariate extreme-value theory, as will briefly be explained. If $\bm{V}^{(i)}$ are independent copies of some random vector $\bm{V}=(V_1,\ldots,V_d)$, one is interested in the probability law of the vectors of component-wise maxima, that is
\begin{gather*}
\big(\max_{i=1}^{n}\{V^{(i)}_1\},\ldots,\max_{i=1}^{n}\{V^{(i)}_d\} \big),\quad n \in \IN.
\end{gather*}
If one can find sequences $\alpha_1(n),\ldots,\alpha_d(n)>0$ and $\beta_1(n),\ldots,\beta_d(n) \in \IR$ such that the re-scaled vector
\begin{align*}
\Big(\frac{\max_{i=1}^{n}\{V^{(i)}_1\}-\beta_1(n)}{\alpha_1(n)},\ldots,\frac{\max_{i=1}^{n}\{V^{(i)}_d\}-\beta_d(n)}{\alpha_d(n)}\Big)
\end{align*}
converges in distribution to some $\bm{Y}=(Y_1,\ldots,Y_d)$, then one says that $\bm{Y}$ has a \emph{multivariate extreme-value distribution}. 
A classical result in multivariate extreme-value theory states that $\bm{Y}$ has a multivariate extreme-value distribution if and only if $\bm{Y}$ is max-stable, see, e.g., \cite[pp.\ 172-174]{joe97}. 
\par
Since $\bm{Y}$ is max-stable if and only if $-\bm{Y}$ is min-stable (obviously), max- and min-stability can be studied jointly by focusing on one of the two concepts. Classical extreme-value theory textbooks typically focus on max-stability and further subdivide the study of the probability law of max-stable $\bm{Y}$ into two sub-studies: 
\begin{itemize}
\item[(i)] By the Fisher-Tippett-Gnedenko Theorem, the univariate distribution function $F_k$ of each component $Y_k$ necessarily belongs to either the Gumbel, the Fr\'echet or the Weibull family, see \cite[Chapter 2, p.\ 45 ff]{beirlant04} for background.  
\item[(ii)] Having understood the univariate marginal distribution functions $F_1,\ldots,F_d$ according to (i), the distribution function $F$ of $\bm{Y}$ necessarily takes the form
\begin{gather*}
F(\bm{x}) = C\big( F_1(x_1),\ldots,F_d(x_d)\big),
\end{gather*}
for a copula $C:[0,1]^d \rightarrow [0,1]$ with the characterizing property that $C(\bm{u})^t=C(u_1^t,\ldots,u_d^t)$ for each $t>0$, a so-called \emph{extreme-value copula}.
\end{itemize}
In order to focus on a deeper understanding of extreme-value copulas it is convenient to normalize the margins $F_1,\ldots,F_d$. In classical extreme-value theory, it is standard to normalize to standardized Fr\'echet distributions, i.e.\ $F_k(x)=\exp(-\lambda_k/x)\,1_{\{x>0\}}$ for some $\lambda_k>0$. Furthermore, we observe that $\bm{X}:=(1/Y_1,\ldots,1/Y_d)$ is well-defined, $X_k$ is exponential with rate $\lambda_k$, and $\bm{X}$ is min-stable (since $x \mapsto 1/x$ is strictly decreasing, so max-stability of $\bm{Y}$ is flipped to min-stability of $\bm{X}$). The vector $\bm{X}$ is thus called \emph{min-stable multivariate exponential} and has survival function
\begin{gather*}
\bar{F}(\bm{x}) =\IP(\bm{X}>\bm{x})=\IP(\bm{Y}<1/\bm{x})= C\big( e^{-\lambda_1\,x_1},\ldots,e^{-\lambda_d\,x_d}\big),
\end{gather*}
with extreme-value copula $C$. The survival function $\bar{F}$ is min-stable, satisfying 
\begin{gather}
\bar{F}(\bm{x})^t=\bar{F}(t \,\bm{x}),\quad t>0.
\label{analytical_minstable}
\end{gather}
The analytical property (\ref{analytical_minstable}) characterizes the concept of min-stable multivariate exponentiality on the level of survival functions, and serves as a convenient starting point to study the conditionally iid subfamily (of extreme-value copulas, resp.\ min-stable multivariate exponential distributions). For a given extreme-value copula $C$ it further turns out convenient to consider its so-called \emph{stable tail dependence function}
\begin{gather*}
\ell(\bm{x}) := -\log\Big( C\big( e^{-x_1},\ldots,e^{-x_d}\big)\Big),\quad \bm{x} \in [0,\infty)^d,
\end{gather*}
which satisfies $\ell(t\,\bm{x})=t\,\ell(\bm{x})$. Clearly, $\ell$ determines $C$ and $C$ determines $\ell$, so that investigating $\ell$ instead of $C$ is just a matter of convenience. Wrapping up, a min-stable multivariate exponential distribution is fully determined by the rates $(\lambda_1,\ldots,\lambda_d)$ specifying the one-dimensional exponential margins, and by a stable tail dependence function $\ell$ which stands in a one-to-one relationship with the associated extreme-value copula $C$.

\subsection{Analytical characterization of conditionally iid}
In the sequel, we are interested in the question: when is a min-stable multivariate exponential vector $\bm{X}$, i.e.\ one whose survival function satisfies (\ref{analytical_minstable}), conditionally iid? We start with two important examples.
\begin{example}[Independent exponentials]\label{ex_iid}
If the components $X_1,\ldots,X_d$ of $\bm{X}$ are iid, then we only need to consider the law of $X_1$. By definition, $X_1$ must have an exponential law, so there is some $\lambda>0$ such that for arbitrary $t>0$ we have
\begin{gather*}
\bar{F}(\bm{x})^t = \Big(\prod_{k=1}^{d}e^{-\lambda\,x_k}\Big)^t= e^{-t\,\lambda\,\sum_{k=1}^{d}x_k}=\prod_{k=1}^{d}e^{-\lambda\,t\,x_k}=\bar{F}(t\,\bm{x}).
\end{gather*}
Consequently, $\bm{X}$ is min-stable multivariate exponential. The associated stable tail dependence function is $\ell(\bm{x})=x_1+\ldots+x_d=\norm{x}_1$.
\end{example}

For arbitrary $c \geq 0$ we introduce the notation $\mathfrak{H}_{+,c} \subset \mathfrak{H}_{+}$ for distribution functions of non-negative random variables with mean equal to $c$. For $G \in \mathfrak{H}_+$ we further denote by $M_G:=\int_0^{\infty}1-G(x)\,\mathrm{d}x \in [0,\infty]$ its mean.

\begin{example}[An important semi-parametric family]\label{ex_lF}
Let $G \in\mathfrak{H}_{+}$ with $0<M_G<\infty$. With an iid sequence of unit exponentials $\eta_1,\eta_2,\ldots$ we consider the stochastic process
\begin{gather*}
Z_t:=\sum_{n \geq 1}-\log\Big\{ G\Big( \frac{\eta_1+\ldots+\eta_n}{t}-\Big)\Big\},\quad t \geq 0,
\end{gather*}
taking values in $[0,\infty]$. It is not difficult to see that $H:=1-\exp(-Z)$ takes values in $\mathfrak{H}_+$. Consequently, we may define a conditionally iid random vector $\bm{X}$ via the canonical stochastic model (\ref{canonical_construction_2}) from this process $H$. Conditioned on $H$, the components of $\bm{X}$ are iid with distribution function $H$. It turns out that $\bm{X}$ is min-stable multivariate exponential. To see this, we recall that the increasing sequence $\{\eta_1+\ldots+\eta_n\}_{n \geq 1}$ equals the enumeration of the points of a Poisson random measure on $[0,\infty)$ with intensity measure equal to the Lebesgue measure. This implies with the help of \cite[Proposition 3.6]{resnick87} in $(\ast)$ below that the survival function $\bar{F}$ of $\bm{X}$ is given by
\begin{align*}
\bar{F}(\bm{x}) &= \IP(Z_{x_1} \leq \epsilon_1,\ldots,Z_{x_d} \leq \epsilon_d) = \IE\Big[ e^{-\sum_{k=1}^{d}Z_{x_k}}\Big]\\
& = \IE\Big[ \exp\Big\{ -\sum_{n \geq 1}-\log\Big\{ \prod_{k=1}^{d}\,G\Big( \frac{\eta_1+\ldots+\eta_n}{x_k}-\Big)\Big\}\Big\}\Big]\\
& \stackrel{(\ast)}{=} \exp\Big( -\int_0^{\infty} 1-\prod_{k=1}^{d}\,G\Big( \frac{u}{x_k}\Big)\,\mathrm{d}u \Big).
\end{align*}
We introduce the notation 
\begin{gather*}
\ell_G(\bm{x}):=-\log(\bar{F}(\bm{x}/M_G)) = \frac{1}{M_G}\,\int_0^{\infty} 1-\prod_{k=1}^{d}\,G\Big( \frac{u}{x_k}\Big)\,\mathrm{d}u, 
\end{gather*}
and we observe by substitution that $t\,\ell_G(\bm{x})=\ell_G(t\,\bm{x})$ for arbitrary $t>0$. This implies $\bar{F}(\bm{x})^t=\bar{F}(t\,\bm{x})$, so $\bm{X}$ is min-stable multivariate exponential. The function $\ell_G$ is the stable tail dependence function of $\bm{X}$. The constant $M_G$ equals the exponential rate of the exponential random variables $X_1,\ldots,X_d$.
\end{example}
The main theorem in this section states that Examples \ref{ex_iid} and \ref{ex_lF} are general enough to understand the structure of the set of all infinite exchangeable sequences $\{X_k\}_{k \in \IN}$ whose finite-dimensional margins are both min-stable multivariate exponential and conditionally iid. Concretely, Theorem \ref{thm_minstable} solves Problem \ref{motivatingproblemrefined} for the property (P) of ``having a min-stable multivariate exponential distribution (in some dimension)''. In analytical terms, it states that the stable tail dependence function associated with the extreme-value copula of a conditionally iid min-stable multivariate exponential random vector is a convex mixture of stable tail dependence functions having the structural form as presented in Examples \ref{ex_iid} and \ref{ex_lF}. 

\begin{theorem}[Which min-stable laws are conditionally iid?]\label{thm_minstable}
Let $\{X_k\}_{k \in \IN}$ be an infinite exchangeable sequence of positive random variables such that $\bm{X}=(X_1,\ldots,X_d)$ is min-stable multivariate exponential for all $d \in \IN$. Assume that $\{X_k\}_{k \in \IN}$ is not iid, i.e.\ not given as in Example \ref{ex_iid}. Then there exists a unique triplet $(b,c,\gamma)$ of two constants $b \geq 0$, $c>0$ and a probability measure $\gamma$ on $\mathfrak{H}_{+,1}$, such that $X_k$ is exponential with rate $b+c$ for each $k \in \IN$ and the stable tail dependence function of $\bm{X}$ equals
\begin{gather*}
\ell(\bm{x}):=-\log\Big\{\bar{F}\Big(\frac{\bm{x}}{b+c}\Big)\Big\}=\frac{b}{b+c}\,\norm{x}_1+\frac{c}{b+c}\,\int_{\mathfrak{H}_{+,1}} \ell_G(\bm{x})\,\gamma(\mathrm{d}G).
\end{gather*}
In probabilistic terms, the random distribution function $H$, defined as the limit of empirical distribution functions of the $\{X_k\}_{k \in \IN}$ as in Lemma \ref{lemma_condGC}, necessarily satisfies $H \stackrel{d}{=}1-\exp(-Z)$ with
\begin{gather}
Z_t=b\,t+c\,\sum_{n \geq 1}-\log\Big\{ G^{(n)}_{\frac{\eta_1+\ldots+\eta_n}{t}-}\Big\},\quad t \geq 0,
\label{canonicalstrongIDTsub}
\end{gather}
where $G^{(k)}$ is an iid sequence drawn from the probability measure $\gamma$, independent of the iid unit exponentials $\eta_1,\eta_2,\ldots$.
\end{theorem}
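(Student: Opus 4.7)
The plan is to apply Lemma \ref{lemma_condGC} to obtain the canonical random distribution function $H$ underlying the infinite exchangeable sequence $\{X_k\}_{k \in \IN}$, and to set $Z_t := -\log(1-H_t)$ so that $Z=\{Z_t\}_{t \geq 0}$ is a non-decreasing, right-continuous process satisfying $\bar{F}(\bm{x}) = \IE[\exp(-\sum_{k=1}^{d} Z_{x_k})]$ for every $d \in \IN$ and $\bm{x} \in [0,\infty)^d$. Translating the min-stability identity $\bar{F}(\bm{x})^n = \bar{F}(n\bm{x})$ into a statement about $Z$ is the first substantive step: letting $Z^{(1)}, \ldots, Z^{(n)}$ be independent copies of $Z$,
\begin{gather*}
\IE\Big[\exp\Big(-\sum_{i=1}^n \sum_{k=1}^d Z^{(i)}_{x_k}\Big)\Big] = \bar{F}(\bm{x})^n = \bar{F}(n\,\bm{x}) = \IE\Big[\exp\Big(-\sum_{k=1}^d Z_{n\,x_k}\Big)\Big],
\end{gather*}
so that uniqueness of the canonical random distribution function from Lemma \ref{lemma_condGC}, applied to the two conditionally iid sequences whose random distribution functions are $1-\exp(-\sum_i Z^{(i)})$ and $1-\exp(-Z_{n\cdot})$, yields the equality in law of processes
\begin{gather*}
\{Z^{(1)}_t + \ldots + Z^{(n)}_t\}_{t \geq 0} \stackrel{d}{=} \{Z_{n\,t}\}_{t \geq 0},\quad n \in \IN.
\end{gather*}
Extending from integer to rational scaling by applying the relation both to $n$ and $m$ at argument $t/m$, and then to real scaling via right-continuity, $Z$ is seen to be a non-decreasing stochastic process that is \emph{infinitely divisible with respect to time} (IDT).

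The second and technically deepest step is to invoke a LePage-type spectral representation for non-decreasing IDT processes, the natural analogue of the L\'evy-Khintchine representation for subordinators. Concretely, every such $Z$ admits the decomposition (\ref{canonicalstrongIDTsub}) for some unique constants $b \geq 0$, $c \geq 0$, and a unique probability measure $\gamma \in M_+^1(\mathfrak{H}_{+,1})$. The underlying intuition is that the points $\{(\eta_1+\ldots+\eta_n)/t\}_{n \geq 1}$ form, for each fixed $t>0$, a homogeneous Poisson random measure on $(0,\infty)$ with intensity $t\,\mathrm{d}u$, so that the scaling $Z_{n\,t} \stackrel{d}{=} \sum_{i=1}^n Z^{(i)}_t$ is precisely the self-similarity of this family of Poisson random measures; the random $-\log G^{(n)}$ attached to each atom encodes a c\`adl\`ag jump pattern with mark distribution $\gamma$, while $b$ captures the deterministic linear drift. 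Restricting $\gamma$ to unit-mean distribution functions $\mathfrak{H}_{+,1}$ removes the scale ambiguity between $c$ and the mark measure, and the non-iid hypothesis on $\{X_k\}_{k \in \IN}$ forces $c>0$. I expect this spectral representation to be the main obstacle of the proof: it requires careful Poisson calculus on the path space $\mathfrak{H}_+$ together with an argument isolating the linear drift from the purely discontinuous IDT-component, and a measurable selection argument to realize the $G^{(n)}$ as iid $\gamma$-draws attached to the Poisson atoms.

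With the representation of $Z$ in hand, the final step is a direct Campbell-formula computation for the Poisson random measure carried by the $\eta_n$. Applied to $\IP(X_1>x) = \IE[\exp(-Z_x)]$ with the substitution $v=u/x$ and the normalization $\gamma(\mathfrak{H}_{+,1})=1$, this identifies the exponential rate of each $X_k$ as $b+c$; applied to $\bar{F}(\bm{x}) = \IE[\exp(-\sum_k Z_{x_k})]$, it recovers the claimed convex decomposition
\begin{gather*}
\ell(\bm{x}) = \frac{b}{b+c}\,\norm{\bm{x}}_1 + \frac{c}{b+c}\,\int_{\mathfrak{H}_{+,1}} \ell_G(\bm{x})\,\gamma(\mathrm{d}G).
\end{gather*}
Uniqueness of the triplet $(b,c,\gamma)$ is then inherited from the uniqueness in the IDT representation combined with Lemma \ref{lemma_condGC}, which ensures that the law of $Z$ is itself uniquely determined by the distribution of the infinite sequence $\{X_k\}_{k \in \IN}$.
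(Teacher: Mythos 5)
Your proposal takes essentially the same route as the paper's own proof sketch: your reduction of min-stability to the strong IDT scaling property of $Z=-\log(1-H)$ via de Finetti uniqueness is step (i) of the paper (taken from \cite{maischerer13}), and your ``LePage-type spectral representation'' plus the concluding Campbell-formula computation and normalization argument is precisely the content of steps (ii) and (iii), which the paper likewise imports from \cite{kopp18} and \cite{mai18b} rather than proving. So the proposal is correct at the same level of detail as the paper, with the deep series-representation step correctly identified as the main external input.
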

\begin{proof}
A proof consists of three steps, which have been accomplished in the three references \cite{maischerer13,kopp18,mai18b}, respectively, and which are sketched in the sequel.
\begin{itemize}
\item[(i)] For $Z =-\log(1-H)$ with $H$ as defined in Lemma \ref{lemma_condGC} from the sequence $\{X_k\}_{k \in \IN}$, \cite[Theorem 5.3]{maischerer13} shows that
\begin{gather}
Z \stackrel{d}{=}\Big\{ \sum_{i=1}^{n}Z^{(i)}_{\frac{t}{n}}\Big\}_{t \geq 0},\quad n \in \IN,
\label{strongIDT}
\end{gather}
where $Z^{(i)}$ are independent copies of $Z$. Conversely, it is shown that if $Z$ is non-decreasing and satisfies (\ref{strongIDT}), then $1-\exp(-Z)$ is an element of $\Theta_d^{-1}(\mathfrak{M}_{\ast\ast})$, when $\mathfrak{M}_{\ast\ast}$ is as in Problem \ref{motivatingproblemrefined} and (P) is the property of ``having a min-stable multivariate exponential distribution''.
\item[(ii)] \cite{kopp18} show that a non-negative stochastic process $Z$ satisfying (\ref{strongIDT}) admits a series representation of the form
\begin{gather*}
\{Z_t\}_{t \geq 0} \stackrel{d}{=} \Big\{b\,t+\sum_{n \geq 1}f^{(n)}_{\frac{t}{\eta_1+\ldots+\eta_n}} \Big\}_{t \geq 0},
\end{gather*}
where $f^{(n)}$ are iid copies of some c\`adl\`ag stochastic process $f$ with $f_0=0$ satisfying some integrability condition, and $b \in \IR$.
\item[(iii)] \cite{mai18b} proves that in the series representation in (ii) necessarily $b \geq 0$ and $f$ is almost surely non-decreasing. Furthermore, the integrability condition on $f$ can be re-phrased to say that $t \mapsto \tilde{G}_t:=\exp(-\lim_{s \downarrow t }f_{1/s})$ defines almost surely the distribution function of some random variable with finite mean $M_{\tilde{G}}=\int_0^{\infty}1-\tilde{G}_t\,\mathrm{d}t>0$. Finally, the distribution function $t \mapsto G_t:=\tilde{G}_{M_{\tilde{G}}\,t}$ has unit mean, and the claimed representation for $\ell$ is obtained when $c:=\IE[M_{\tilde{G}}]$ and $\gamma$ is defined as the probability law of $G$ after an appropriate measure change. That $(b,c,\gamma)$ is unique follows from the normalization to unit mean of $G$ (for each single realization).
\end{itemize} 
\end{proof}
Stochastic processes with property (\ref{strongIDT}) are said to be \emph{strongly infinitely divisible with respect to time (strong IDT)}. Particular examples of strong IDT processes have been studied in \cite{mansuy05,ouknine08,ouknine12}, with an emphasis on the associated multivariate min-stable laws also in \cite{maischerer13,bernhart15,mai18,maischerer18}. 
\par
Every L\'evy process is strong IDT, but the converse needs not hold. For instance, if $Z=\{Z_t\}_{t \geq 0}$ is a non-trivial L\'evy subordinator and $a>b>0$, then the stochastic process $\{Z_{a\,t}+Z_{b\,t}\}_{t \geq 0}$ is strong IDT, but not a L\'evy subordinator. The probability law $\gamma$ in Theorem \ref{thm_minstable} in case of a L\'evy subordinator is specified as the probability law of 
\begin{gather}
G_t = e^{-M}+\Big( 1-e^{-M}\Big)\,1_{\{1-e^{-M}\geq 1/t\}},\quad t \geq 0,
\label{ex_LFCasSTDF}
\end{gather}
with an arbitrary random variable $M$ taking values in $(0,\infty]$. The L\'evy measure of $Z$ and the probability law of $M$ stand in one-to-one relation. We know from the preceding section that if $Z$ is a L\'evy subordinator, the associated element in $\mathfrak{M}_{\ast\ast}$ is a $d$-variate Marshall-Olkin distribution. Indeed, the Marshall-Olkin distribution is one of the most important examples of min-stable multivariate exponential distributions. Two further examples are presented in the sequel.
\begin{example}[The (negative) logistic model]
If we reconsider Example \ref{ex_lF} with the Fr\'echet distribution function $G(x)=\exp(-\{\Gamma(1-\theta)\,x\}^{-1/\theta})$ for $\theta \in (0,1)$, then we observe
\begin{gather*}
\ell_G(\bm{x}) = \Big(\sum_{k=1}^{d}x_k^{\frac{1}{\theta}}\Big)^{\theta}=\norm{\bm{x}}_{\frac{1}{\theta}}.
\end{gather*}
This is the so-called \emph{logistic model}. It is particularly convenient to be looked at from the perspective of conditionally iid models, since the associated strong IDT process $Z$ takes a very simple form, to wit 
\begin{gather*}
Z \stackrel{d}{=} \big\{ S\,t^{\frac{1}{\theta}}\big\}_{t \geq 0},\quad S \mbox{ a }\theta\mbox{-stable random variable, i.e.\ }\IE\Big[ e^{-x\,S}\Big]=e^{-x^{\theta}}.
\end{gather*}
In particular, the resulting extreme-value copula, named \emph{Gumbel copula} after \cite{gumbel60,gumbel61}, is also an Archimedean copula, see Remark \ref{rmk_AC}. In fact, it is the only copula that is both Archimedean and of extreme-value kind, a result first discovered in \cite{genest89}.
\par
A related example is obtained, if we choose the Weibull distribution function $G(x)=1-\exp(-\{\Gamma(\theta+1)\,x\}^{1/\theta})$, which implies
\begin{gather*}
\ell_G(\bm{x}) = \sum_{j=1}^{d}(-1)^{j+1} \sum_{1 \leq i_1<\ldots<i_j \leq d}\Big(\sum_{k=1}^{j}x_{i_k}^{-\theta}\Big)^{-\frac{1}{\theta}}.
\end{gather*}
This is the so-called \emph{negative logistic model}. The associated extreme-value copula is named \emph{Galambos copula} after \cite{galambos75}. There exist many analogies between logistic and negative logistic models, the interested reader is referred to \cite{genest17} for background. In particular, the Galambos copula is the most popular representative of the family of so-called \emph{reciprocal Archimedean copulas} as introduced in \cite{genest18}, see also paragraph \ref{open_iidunif} below.
\end{example}

\begin{example}[A rich parametric family]
For $G \in \mathfrak{H}_{+,1}$ the function $\Psi_G(z):=\int_0^{\infty}1-G(t)^z\,\mathrm{d}t$ defines a Bernstein function with $\Psi_G(1)=1$, see \cite[Lemma 3]{mai18}. This implies for $z \in (0,\infty)$ that $G_z \in \mathfrak{H}_{+,1}$, where $G_z(x):= G(x\,\Psi_G(z))^z$. Consequently, if $M$ is a positive random variable, we may define $\gamma \in M_+^1(\mathfrak{H}_{+,1})$ as the law of $G_M$. The associated stable tail dependence function equals $\ell(\bm{x}) := \IE[\ell_{G_M}(\bm{x})]$. Many parametric models from the literature are comprised by this construction. In particular, Example \ref{ex_lF} corresponds to the case $M \equiv 1$, and if $G(x)=\exp(-1)+(1-\exp(-1))\,1_{\{1-\exp(-1) \geq 1/x\}}$ we observe that $G_M$ equals the random distribution function (\ref{ex_LFCasSTDF}) corresponding to the Marshall-Olkin subfamily. See \cite{maischerer18} for a detailed investigation and applications of this parametric family.
\end{example}

\begin{remark}[Extension to laws with exponential minima]
We have seen that the Marshall-Olkin distribution is a subfamily of min-stable multivariate exponential laws. The seminal reference \cite{esary74} treats both families as multivariate extensions of the univariate exponential law and in the process introduces the even larger family of laws with exponential minima. A random vector $\bm{X}$ is said to have \emph{exponential minima} if $\min\{X_{i_1},\ldots,X_{i_k}\}$ has a univariate exponential law for arbitrary $1 \leq i_1<\ldots i_k \leq d$. Obviously, a min-stable multivariate exponential law has exponential minima, but the converse needs not hold in general. It is shown in \cite{maischerer13} that if $Z=\{Z_t\}_{t \geq 0}$ is a right-continuous, non-decreasing process such that $\IE[\exp(-x\,Z_t)]=\exp(-t\,\Psi(x))$ for some Bernstein function $\Psi$, then $\bm{X}$ as defined in (\ref{canonical_construction_2}) has exponential minima. The process $Z$ is said to be \emph{weakly infinitely divisible with respect to time (weak IDT)}, and - as the nomenclature suggests - every strong IDT process is also weak IDT. However, there exist weak IDT processes which are not strong IDT. Notice in particular that a L\'evy subordinator is uniquely determined in law by the law of $Z_1$ (or equivalently the Bernstein function $\Psi$), but neither strong nor weak IDT processes are determined in law by the law of $Z_1$. If one takes two independent, but different, strong IDT processes $Z^{(1)},Z^{(2)}$ subject to $Z^{(1)}_1 \stackrel{d}{=}Z^{(2)}_1$, then the stochastic process
\begin{gather*}
Z_t:=\begin{cases}
Z^{(1)}_t & \mbox{ if }B=1,\\
Z^{(2)}_t & \mbox{ if }B=0,\\
\end{cases},\quad B \mbox{ independent Bernoulli}\Big(\frac{1}{2}\Big)\mbox{-variate},\quad t \geq 0,
\end{gather*}
is weak IDT, but not strong IDT. On the level of $\bm{X}$ this means that the mixture of two min-stable multivariate exponential random vectors always has exponential minima, but needs not be min-stable anymore.
\end{remark}

\begin{remark}[Archimax copulas]
The study of min-stable multivariate exponentials is analogous to the study of extreme-value copulas. From this perspective, Theorem \ref{thm_minstable} gives us a canonical stochastic model for all conditionally iid extreme-value copulas. Another family of copulas for which we understand the conditionally iid subfamily pretty well is Archimedean copulas, related to $\ell_1$-norm symmetric distributions and mentioned in Remark \ref{rmk_AC}. The family of so-called \emph{Archimax copulas} is a superclass of both extreme-value and Archimedean copulas. It has been studied in \cite{caperaa00,charpentier14} with the intention to create a rich copula family that comprises well-known subfamilies. An extreme-value copula $C$ is conveniently described in terms of its stable tail dependence function. Recall that Theorem \ref{thm_minstable} is formulated in terms of the stable tail dependence function and gives an analytical criterion for $C$ to be conditionally iid. An Archimax copula $C$ is a multivariate distribution function of the functional form
\begin{gather*}
C_{\ell,\varphi}(u_1,\ldots,u_d) = \varphi\Big( \ell\big( \varphi^{-1}(u_1),\ldots, \varphi^{-1}(u_d)\big) \Big).
\end{gather*}
It is recognized that if $\ell(x_1,\ldots,x_d)=\norm{\bm{x}}_1$ then $C_{\ell,\varphi}$ is an Archimedean copula, and if $\varphi(x)=\exp(-x)$, then $C_{\ell,\varphi}$ is an extreme-value copula. By combining our knowledge from Theorems \ref{thm_AC} and \ref{thm_minstable} about Archimedean and extreme-value copulas, it is immediate to show that
\begin{align}
&\Big\{ \gamma \in M_+^1(\mathfrak{H}_{+}) \,:\,\big\{ 1-e^{-Z_{M\,t}}\big\}_{t \geq 0}\sim \gamma ,\,M>0 \mbox{ a positive random variable, and } \nonumber\\
& \qquad \qquad \{Z_t\}_{t \geq 0} \mbox{ non-decreasing strong IDT} \Big\} \subset \Theta_d^{-1}\big(\mathfrak{M}_{\ast\ast}\big),
\label{ext_ARCHIMAX}
\end{align}
when $\mathfrak{M}$ denotes the family of all probability laws with the property (P) of ``having a survival function of the functional form $\varphi \circ \ell$ with $\ell$ some stable tail dependence function''. In this case, the function $\varphi$ equals the Laplace transform of $M$ and $\ell$ is given in terms of a triplet $(b,c,\gamma)$ such as in Theorem \ref{thm_minstable}, associated with the strong IDT process $Z$, and $b+c=1$. Notice that each stable tail dependence function $\ell$ equals the restriction of an orthant-monotonic norm to $[0,\infty)^d$, see \cite{molchanov08}, so that survival functions of the form $\varphi \circ \ell$ are precisely the survival functions that are symmetric with respect to the norm $\ell$.
\end{remark}

\newpage

\section{Exogenous shock models} \label{sec:exshock}
The present section studies a family $\mathfrak{M}$ of multivariate distribution functions that have a stochastic representation according to the following exogenous shock model: We consider some system consisting of $d$ components and interpret the $k$-th component of our random vector $\bm{X}=(X_1,\ldots,X_d)$ with law in $\mathfrak{M}$ as the lifetime of the $k$-th component in our system. A component lives until it is affected by an exogenous shock, and the arrival times of these exogenous shocks are modeled stochastically. For each non-empty subset $I \subset \{1,\ldots,d\}$ of components, we denote by $E_I$ a non-negative random variable. We assume that all $E_I$ are independent and interpret $E_I$ as the arrival time of an exogenous shock affecting all components of our random vector which are indexed by $I$. This means that we define
\begin{gather}
X_k:=\min\{E_I\,:\,k \in I\},\quad k=1,\ldots,d.
\label{exshock_canonic}
\end{gather}
Such exogenous shock models are popular in reliability theory, insurance risk, and portfolio credit risk. Recall from Example \ref{ex_lom}($\mathcal{E}$) that this model is a generalization of the Marshall-Olkin distribution, which arises as special case if all the $E_I$ are exponentially distributed, see also Example \ref{ex_MOasexshock} below. For the sake of clarity, we formally introduce the following definition.
\begin{definition}[Exogenous shock model]
A probability measure $\mu \in M_+^1(\IR^d)$ is said to \emph{define an exogenous shock model} if on some probability space there exists a random vector $\bm{X}$ with stochastic representation (\ref{exshock_canonic}) such that $\bm{X} \sim \mu$.
\end{definition}

\subsection{Exchangeability and the extendibility problem}
We are interested in a solution of Problem \ref{motivatingproblemrefined} for the property (P) of ``defining an exogenous shock model''. By Lemma \ref{lemma_exchangeable} exchangeability is a necessary requirement on $\bm{X}$, and we observe immediately from (\ref{exshock_canonic}) that this implies that the distribution function of $E_I$ is allowed to depend on the subset $I$ only through its cardinality $|I|$. Some simple algebraic manipulations, see the proof of Theorem \ref{thm_exShock} below, reveal that the survival function of $\bm{X}$ necessarily must be given as the product of its arguments after being ordered and idiosyncratically distorted. Already the characterization of the exchangeable subfamily in analytical terms is an interesting problem, the interested reader is referred to \cite{mai15} for its solution. 
\par
The conditionally iid subfamily $\mathfrak{M}_{\ast\ast}$ is also investigated in \cite{mai15}. One major finding is that when the increments of the factor process $Z$ in the canonical construction (\ref{canonical_construction_2}) are independent, then one ends up with an exogenous shock model. Recall that a c\`adl\`ag stochastic process $Z=\{Z_t\}_{t \geq 0}$ with independent increments is called \emph{additive}, see \cite{sato99} for a textbook treatment. For our purpose, it is sufficient to be aware that the probability law of a non-decreasing additive process $Z=\{Z_t\}_{t \geq 0}$ with $Z_0=0$ can be described uniquely in terms of a family $\{\Psi_t\}_{t \geq 0}$ of Bernstein functions defined by $\Psi_t(x):=-\log(\IE[\exp(-x\,Z_t)])$, $x \geq 0$, i.e.\ $\Psi_t$ equals the Laplace exponent of the infinitely divisible random variable $Z_t$. The independent increment property implies for $0 \leq s \leq t$ that $\Psi_t-\Psi_s$ is also a Bernstein function and equals the Laplace exponent of the infinitely divisible random variable $Z_t-Z_s$. The easiest example for a non-decreasing additive process is a L\'evy subordinator, in which case $\Psi_t=t\,\Psi_1$, i.e.\ the probability law is described completely in terms of just one Bernstein function $\Psi_1$ (due to the defining property that the increments are not only independent but also identically distributed). Two further compelling examples of (non-L\'evy) additive processes are presented in subsequent paragraphs.

\begin{theorem}[Additive subordinators and exogenous shock models]\label{thm_exShock}
Let $\mathfrak{M}$ denote the family of probability laws with the property (P) of ``defining an exogenous shock model''. A random vector $\bm{X}$ has law in $\mathfrak{M}$ and is exchangeable if and only if it admits a survival copula of the functional form
\begin{gather}
\hat{C}(u_1,\ldots,u_d) = u_{[1]}\,\prod_{k=2}^{d}g_k(u_{[k]}),\quad u_1,\ldots,u_d \in [0,1],
\label{exshock_copula}
\end{gather}
with certain functions $g_k:[0,1] \rightarrow [0,1]$. Furthermore, 
\begin{gather*}
\Big\{\gamma \in M_+^1(\mathfrak{H}_{+}) \,:\,\big\{1-e^{-Z_t} \big\}_{t \geq 0} \sim \gamma,\,\{Z_t\}_{t \geq 0}\mbox{ additive process}\Big\} = \Theta_d^{-1}\big(\mathfrak{M}_{\ast\ast}\big).
\end{gather*}
\end{theorem}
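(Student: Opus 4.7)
The plan is to handle the two claims in sequence: first the analytic characterization of the exchangeable subfamily of $\mathfrak{M}$, then the identification of $\Theta_d^{-1}(\mathfrak{M}_{\ast\ast})$ with the laws of non-decreasing additive processes.

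\emph{First claim.} Exchangeability of $\bm{X}$ forces the shock variables $E_I$ to depend on $I$ only through $|I|$; write $\bar F_{|I|}$ for the common survival function. Independence of the $E_I$'s together with a combinatorial count (for each rank $j$ and each $k\leq j$, exactly $\binom{j-1}{k-1}$ subsets of size $k$ attain their $\bm{x}$-maximum at rank $j$) give
\[\bar F(\bm{x})=\prod_{\emptyset \neq I\subset\{1,\ldots,d\}}\bar F_{|I|}\bigl(\max_{k\in I}x_k\bigr)=\prod_{j=1}^d h_j(x_{[j]}),\quad h_j:=\prod_{k=1}^{j}\bar F_k^{\binom{j-1}{k-1}}.\]
The common marginal survival function is $\bar G:=h_d$; substituting $u_{[d-j+1]}=\bar G(x_{[j]})$ (and noting $u_{[1]}=\bar G(x_{[d]})=h_d(x_{[d]})$) turns the product into the shape (\ref{exshock_copula}) with $g_k:=h_{d+1-k}\circ \bar G^{-1}$. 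Conversely, inverting the triangular linear system $\log h_j=\sum_{k\le j}\binom{j-1}{k-1}\log \bar F_k$ recovers candidate $\bar F_k$; the admissibility constraints alluded to in the statement are exactly what ensures these are genuine survival functions, which then drive the exogenous shock construction (\ref{exshock_canonic}).

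\emph{Second claim, easy inclusion.} Given a non-decreasing additive $\{Z_t\}_{t\geq 0}$ with $Z_0=0$ and Laplace exponents $\Psi_t(x):=-\log \IE[e^{-xZ_t}]$, the canonical construction (\ref{canonical_construction_2}) produces a conditionally iid $\bm{X}$ in every dimension. Telescoping $Z_{x_{[k]}}=\sum_{j\le k}(Z_{x_{[j]}}-Z_{x_{[j-1]}})$ (with $x_{[0]}:=0$) and the independent-increments property give
\[\bar F(\bm{x})=\IE\Bigl[\prod_{k=1}^{d}e^{-Z_{x_{[k]}}}\Bigr]=\prod_{j=1}^{d}\IE\Bigl[e^{-(d-j+1)(Z_{x_{[j]}}-Z_{x_{[j-1]}})}\Bigr]=\prod_{j=1}^{d}e^{-[\Psi_{x_{[j]}}(d-j+1)-\Psi_{x_{[j-1]}}(d-j+1)]}.\]
Regrouping factors by the $x_{[j]}$ on which they depend rewrites this as $\prod_{j=1}^{d} h_j(x_{[j]})$ with $h_j(x):=\exp\{\Psi_x(d-j)-\Psi_x(d-j+1)\}$; by the first claim this is an exchangeable exogenous shock model survival function in every dimension, so $\bm{X}\in \mathfrak{M}_{\ast\ast}$.

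\emph{Second claim, hard inclusion.} Conversely, if $\bm{X}\in \mathfrak{M}_{\ast\ast}$, recover $Z=-\log(1-H)$ from its infinite extension via Lemma \ref{lemma_condGC}. Combining the conditional iid identity $\bar F(\bm{x})=\IE[\prod_k e^{-Z_{x_k}}]$ with the factorization $\prod_j h_j^{(d)}(x_{[j]})$ from the first claim, and substituting a $\bm{x}$ having $n_\ell$ coordinates equal to $t_\ell$ for chosen $0\leq t_1<\cdots<t_m$ and positive integers $n_\ell$, produces
\[\IE\bigl[e^{-n_1 Z_{t_1}-\cdots-n_m Z_{t_m}}\bigr]=\prod_{\ell=1}^m \phi_\ell(t_\ell;n_1,\ldots,n_m),\]
so the joint Laplace transform of $(Z_{t_1},\ldots,Z_{t_m})$ factorizes at every integer multiplicity into a product whose $\ell$-th factor depends only on $t_\ell$. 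The main obstacle is to upgrade this ``factorization at integer vectors'' into genuine independence of the increments $(Z_{t_1},Z_{t_2}-Z_{t_1},\ldots,Z_{t_m}-Z_{t_{m-1}})$: one uses analyticity of Laplace transforms to pass from $\IN^m$ to $[0,\infty)^m$, and then the telescoping $\sum_\ell n_\ell Z_{t_\ell}=\sum_k M_k(Z_{t_k}-Z_{t_{k-1}})$ with $M_k=n_k+\cdots+n_m$ to match the factorized form against that of an independent-increment process, forcing additivity of $Z$.
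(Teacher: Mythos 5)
Your first claim and your ``easy inclusion'' follow essentially the same route as the paper's own sketch: the combinatorial regrouping of $\prod_{\emptyset\neq I}\bar F_{|I|}(\max_{k\in I}x_k)$ by the rank at which each subset attains its maximum (the count $\binom{j-1}{k-1}$ is correct and reproduces the paper's formula), and the telescoping of $\sum_k Z_{x_{[k]}}$ against independent increments, which yields exactly the paper's $g_k(u)=\exp\{-\Psi_{\bar F_1^{-1}(u)}(k)+\Psi_{\bar F_1^{-1}(u)}(k-1)\}$. Two caveats at this level of detail, shared with the paper's sketch but worth flagging: the theorem's ``certain functions $g_k$'' hides admissibility conditions (worked out in \cite{mai15}), and to conclude that the additive-frailty vector actually lies in $\mathfrak{M}$ (not merely that its survival function has the shape $\prod_j h_j(x_{[j]})$) you must check that inverting your triangular system for these particular $h_j$ produces genuine survival functions $\bar F_k$ — this follows from complete monotonicity of $j\mapsto \IE[e^{-jZ_x}]$, but you do not say it.

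The genuine gap is your ``hard inclusion'', i.e.\ the paper's ``$\supset$'': that every $\gamma\in\Theta_d^{-1}(\mathfrak{M}_{\ast\ast})$ is the law of $1-e^{-Z}$ for an additive $Z$. The paper does not prove this at all; it cites \cite{sloot20} as a recent, separate achievement, and your sketch does not supply the missing content. Your starting point (evaluate $\IE[e^{-n_1Z_{t_1}-\cdots-n_mZ_{t_m}}]$ by inserting repeated coordinates into the product form) is the right one, but the decisive step ``analyticity of Laplace transforms to pass from $\IN^m$ to $[0,\infty)^m$, then match against an independent-increment process, forcing additivity'' is an assertion, not an argument. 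Concretely: (a) the factorization you obtain holds only for exponent vectors whose increment coefficients $M_k=n_k+\cdots+n_m$ form a strictly decreasing sequence of positive integers — a cone, not the orthant — and each factor $\phi_\ell(t_\ell;n_1,\ldots,n_m)$ depends on the whole multi-index in an unknown way, so there is no identity in the exponent variables for analytic continuation to propagate; one must first manufacture such an identity (e.g.\ by limiting arguments in the time variables, $t_1\downarrow 0$, $t_{\ell+1}\downarrow t_\ell$, identifying the factors with ratios of marginal transforms), and only then extend off the cone; (b) even granted a separated form of the joint transform of $(Z_{t_1},\ldots,Z_{t_m})$, independence of increments requires showing that the candidate increment factors are themselves Laplace transforms of $[0,\infty]$-valued random variables (complete monotonicity of the relevant ratios), and handling killing, possible mass at infinity, and $Z_0$, before $Z$ can be identified in law with an additive subordinator. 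None of these steps appear in your proposal, so as it stands the second equality of Theorem \ref{thm_exShock} is not proved; you have reproduced the part the paper sketches and gestured at the part the paper outsources.
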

\begin{proof}
A proof for the inclusion ``$\supset$'' has been accomplished only recently and can be found in \cite{sloot20}. A proof sketch for the inclusion ``$\subset$'' works as follows, see \cite{mai15} for details. The survival function of the random vector $\bm{X}$ defined by (\ref{exshock_canonic}) can be written in terms of the one-dimensional survival functions of the $E_I$ as
\begin{gather*}
\IP(\bm{X}>\bm{x}) = \prod_{\emptyset \neq I}\IP(E_I>\max\{x_k\,:\,k \in I\}).
\end{gather*}
Exchangeability of $\bm{X}$ implies that the probability law of $E_I$ depends on $I$ only via its cardinality $|I| \in \{1,\ldots,d\}$. If we denote the survival function of $E_I$ with $|I|=m$ by $\bar{H}_m$, we observe that
\begin{align}
\IP(\bm{X}>\bm{x}) &= \prod_{m=1}^{d}\prod_{I\,:\,|I|=m} \bar{H}_m(\max\{x_k\,:\,k \in I\}) =  \prod_{m=1}^{d}\prod_{k=1}^{d-m+1}\big(\bar{H}_m(x_{[d-k+1]})\big)^{\binom{d-k}{m-1}} \nonumber\\
& = \prod_{k=1}^{d}\prod_{m=1}^{d-k+1}\big(\bar{H}_m(x_{[d-k+1]})\big)^{\binom{d-k}{m-1}}.\label{shortref_exshockproof}
\end{align}
Noting for $\bm{x}=(x,0,\ldots,0)$ that $x_{[d]}=x$ and $x_{[1]}=\ldots=x_{[d-1]}=0$, we observe that the one-dimensional margins are
\begin{gather*}
\IP(X_k>x) = \prod_{m=1}^{d}\big(\bar{H}_m(x)\big)^{\binom{d-1}{m-1}}=:\bar{F}_1(x),\quad k=1,\ldots,d.
\end{gather*}
That (\ref{shortref_exshockproof}) can be written as $\hat{C}\big( \IP(X_1>x_1),\ldots,\IP(X_d>x_d)\big)$ with $\hat{C}$ as in (\ref{exshock_copula}) follows by a tedious yet straightforward computation with the $g_k$ defined as
\begin{gather*}
g_k:=\prod_{m=1}^{d-k+1}\big(\bar{H}_m \circ \bar{F}_1^{-1}\big)^{\binom{d-k}{m-1}},\quad k=2,\ldots,d,
\end{gather*}
where $\bar{F}_1^{-1}$ denotes the generalized inverse of the non-increasing function $\bar{F}_1$, which is defined analogous to the generalized inverse of a distribution function as
\begin{gather*}
\bar{F}_1^{-1}(x):=\inf\{t>0\,:\,\bar{F}(t) \leq x\}.
\end{gather*}
Now assume $Z$ is an additive process with associated family of Bernstein functions $\{\Psi_t\}_{t \geq 0}$. The survival copula of the random vector $\bm{X}$ of Equation (\ref{canonical_construction_2}) can be computed in closed form using the independent increment property of $Z$. It is easily shown to be of the structural form (\ref{exshock_copula}), when
\begin{gather*}
g_k(u) := \exp\big(-\Psi_{\bar{F}_1^{-1}(u)}(k)+\Psi_{\bar{F}_1^{-1}(u)}(k-1)\big),\quad k=2,\ldots,d,
\end{gather*}
with $\bar{F}_1(x) := \exp(-\Psi_x(1))$, $x \geq 0$.
\end{proof}

\begin{remark}[Related literature]
Interestingly, there exists quite some literature that proposes the use of the random distribution function $H=1-\exp(-Z)$ with $Z$ additive as a prior distribution when estimating the probability law of observed samples $X_1,\ldots,X_d$ - without noticing the relation to exogenous shock models. $H$ is sometimes called a \emph{neutral-to-the-right prior} in these references. The use of additive $Z$ in this particular application is mainly explained by analytical convenience, because it implies that prior and posterior distributions have a similar algebraic structure. To provide some examples, the interested reader is referred to the papers \cite{kalbfleisch78,hjort90,lijoi08} and the references mentioned therein. Furthermore, \cite{kingman67} calls the random probability measures defined via the random distribution function $H$ \emph{completely random measures}, and characterizes them by the property that the values they take on disjoint subsets are independent.
\end{remark}

There are some interesting subfamilies of exogenous shock models that are worth mentioning with respect to their conditionally iid substructure. The first of them is a well known friend from previous sections, re-visited once again in the following example.
\begin{example}[The Marshall--Olkin law revisited]\label{ex_MOasexshock}
If all random variables $E_I$ in the exogenous shock model construction (\ref{exshock_canonic}) are exponentially distributed, we are in the special situation of Example \ref{ex_lom}($\mathcal{E}$). Indeed, it has already been shown in the original reference \cite{marshall67} that every Marshall--Olkin distribution can be constructed like this. Hence, we already know from Theorem \ref{thm_ciidlom}($\mathcal{E}$) that an exogenous shock model with exponential arrival times is obtained via the canonical conditionally iid model (\ref{canonical_construction_2}) if the associated stochastic process $H=\{H_t\}_{t \geq 0}\sim \gamma \in M_+^1(\mathfrak{H}_+)$ is such that $Z_t:=-\log(1-H_t)$, $t \geq 0$, defines a L\'evy subordinator, which is a special additive subordinator.
\end{example}

\begin{example}[A simple global shock model]
A special case of copulas of the form (\ref{exshock_copula}) is considered in \cite{durante07}, namely $g_2=\ldots=g_d$, which we briefly put in context with the additive process construction. To this end, let $g_2$ be a strictly increasing and continuous distribution function of some random variable taking values in $[0,1]$, assuming $x \mapsto g_2(x)/x$ is non-increasing on $(0,1]$. The function $F_M(x):= x/g_2(x)$ then is a distribution function on $[0,1]$, and we let $M$ be a random variable with this distribution function. Independently, let $W_1,W_2,\ldots$ be an iid sequence drawn from $g_2$. We consider the infinite exchangeable sequence $\{X_k\}_{k \in \IN}$ with $X_k:=\min\{E_{\{k\}},E_{\{1,2,\ldots\}}\}$, $k \in \IN$, where
\begin{gather*}
E_{\{k\}}:=-\log\big( F_M(W_k)\big),\, k\in \IN,\quad E_{\{1,2,\ldots\}}:=-\log\big( F_M(M)\big).
\end{gather*}
By definition, each finite $d$-margin has an exogenous shock model representation, and the survival copula $\hat{C}$ is easily seen to be of the form (\ref{exshock_copula}) with $g_2=\ldots=g_d$, for arbitrary $d \geq 2$. The conditional distribution function is static in the sense of Section \ref{sec:static}, and given by $H=1-\exp(-Z)$ with
\begin{gather*}
Z_t:= \begin{cases}
-\log\Big( g_2\Big\{ F_M^{-1}\big(e^{-t}\big)\Big\}\Big) & \mbox{, if }t < E_{\{1,2,\ldots\}}\\
\infty & \mbox{, if }t \geq E_{\{1,2,\ldots\}}\\
\end{cases}, \quad t \geq 0.
\end{gather*}
The random variable $E_{\{1,2,\ldots\}}$ is unit exponential, and $Z$ is additive with associated family of Bernstein functions $\Psi_t(x)=-\log(\IE[\exp(-x\,Z_t)])$ given by
\begin{gather*}
\Psi_t(x) = t\,1_{\{x>0\}}+x\,\Big( -\log\Big\{ g_2\big\{ F_M^{-1}\big(e^{-t} \big)\big\}\Big\}\Big),\quad x,t \geq 0.
\end{gather*}
Notice that for each fixed $t>0$ this corresponds to an infinitely divisible distribution of $Z_t$ that is concentrated on the set 
\begin{gather*}
\Big\{-\log\Big( g_2\big\{ F_M^{-1}\big(e^{-t} \big)\big\}\Big) ,\,\infty\Big\}.
\end{gather*}
The case $g_2(x)=x^{\alpha}$ with $\alpha \in [0,1]$ implies that $Z$ is a killed L\'evy subordinator that grows linearly before it jumps to infinity, $\bm{X}$ has a Marshall-Olkin law, and the $E_I$ are exponential. In the general case, $Z$ needs not grow linearly before it gets killed.
\end{example}

Two further examples are studied in greater detail in the following two paragraphs, since they give rise to nice characterization results.

\subsection{The Dirichlet prior and radial symmetry}
In the two landmark papers \cite{ferguson73,ferguson74}, T.S. Ferguson introduces the so-called Dirichlet prior and shows that it can be constructed by means of an additive process. More clearly, let $c>0$ be a model parameter and let $G \in \mathfrak{H}_+$, continuous and strictly increasing. Consider a non-decreasing additive process $Z=\{Z_t\}_{t \in [G^{-1}(0),G^{-1}(1)]}$ whose probability law is determined by a family of Bernstein functions $\{\Psi_t\}_{t \in (G^{-1}(0),G^{-1}(1))}$, which are given by
\begin{gather*}
\Psi_t(x) = \int_{0}^{\infty}\big( 1-e^{-x\,u}\big)\,\frac{e^{-u\,c\,(1-G(t))}-e^{-u\,c}}{u\,(1-e^{-u})}\,\mathrm{d}u,\quad x\geq 0,\,G^{-1}(0)<t<G^{-1}(1).
\end{gather*} 
The random distribution function $H=\{H_t\}_{t \in [G^{-1}(0),G^{-1}(1)]}$ defined by $H_t:=1-\exp(-Z_t)$ satisfies the following property: For arbitrary $G^{-1}(0)<t_1<\ldots<t_d<G^{-1}(1)$ the random vector
\begin{gather*}
\big(H_{t_1},H_{t_2}-H_{t_1},\ldots,H_{t_d}-H_{t_{d-1}},1-H_{t_d} \big)
\end{gather*}
has a Dirichlet distribution\footnote{Recall from Remark \ref{rmk_Liouville} that $\bm{S}=(S_1,\ldots,S_d)$ has a Dirichlet distribution with parameters $\bm{\alpha}=(\alpha_1,\ldots,\alpha_d)$ if $\bm{S}\stackrel{d}{=}\bm{G}/\norm{\bm{G}}_1$ for a vector $\bm{G}$ of independent unit-scale Gamma-distributed random variables.} with parameters
\begin{gather*}
c\,\big(G({t_1}),G({t_2})-G({t_1}),\ldots,G({t_d})-G({t_{d-1}}),1-G({t_d}) \big),
\end{gather*}
and $H$ is called \emph{Dirichlet prior} with parameters $(c,G)$, denoted $DP(c,G)$ in the sequel. The probability distribution of $(X_1,\ldots,X_d)$ in (\ref{canonical_construction_2}), when $H=DP(c,G)$ for some $G$ with support $[G^{-1}(0),G^{-1}(1)]:=[0,\infty]$, is given by
\begin{align}
\IP(X_1>x_1,\ldots,X_d>x_d) &= \hat{C}_c\big( 1-G(x_1),\ldots,1-G(x_d)\big), \mbox{ with}\nonumber\\
\hat{C}_c(u_1,\ldots,u_d)&=u_{[1]}\,\prod_{k=2}^{d}\frac{c\,u_{[k]}+k-1}{c+k-1}.\label{Dirichlet_copula}
\end{align}
It is insightful to remark that for $c \searrow 0$ the copula $\hat{C}_c$ converges to the so-called \emph{upper-Fr\'echet Hoeffding copula} $\hat{C}_0(\bm{u})=u_{[1]}$, and for $c \nearrow \infty$ to the copula $\hat{C}_{\infty}(\bm{u})=\prod_{k =1}^{d}u_k$ associated with independence. The intuition of the Dirichlet prior model is that all components of $\bm{X}$ have distribution function $G$, but one is uncertain whether $G$ is really the correct distribution function. So the parameter $c$ models an uncertainty about $G$ in the sense that the process $H$ must be viewed as a ``distortion'' of $G$. For $c \nearrow  \infty$ we obtain $H=G$, while for $c \searrow 0$ the process $H$ is maximally chaotic (in some sense) and does not  resemble $G$ at all.
\par
Interestingly, if the probability law $\mathrm{d}G$ is symmetric about its median $\mu:=G^{-1}(0.5)$, then the random vector $(X_1,\ldots,X_d)$ is radially symmetric, which can be verified using Lemma \ref{lemma_rs}. One can furthermore show that there exists no other conditionally iid exogenous shock model satisfying this property, see the following lemma. To this end, recall that a copula $C$ is called \emph{radially symmetric} if $C=\hat{C}$, i.e.\ it equals its own survival copula, which means that $\bm{U}=(U_1,\ldots,U_d) \stackrel{d}{=}(1-U_1,\ldots,1-U_d)$ for $\bm{U} \sim C$.

\begin{lemma}[Radial symmetry in exchangeable exogenous shock models]\label{lemma_DPrs}
A copula of the structural form (\ref{exshock_copula}) is radially symmetric if and only if the functions $g_k$ are linear, $k=2,\ldots,d$, which is the case if and only if there is a $c \in [0,\infty]$ such that the copula takes the form (\ref{Dirichlet_copula}).
\end{lemma}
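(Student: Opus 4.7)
The plan splits into a sufficiency direction and a necessity direction. For sufficiency, that the Dirichlet copula (\ref{Dirichlet_copula}) is radially symmetric, I would work at the level of the Dirichlet prior $H \sim DP(c,G)$ with $G$ the uniform distribution on $[0,1]$, which induces $\hat{C}_c$ via the canonical construction (\ref{canonical_construction_2}). Ferguson's characterization states that, for any partition $0 = t_0 < t_1 < \cdots < t_n = 1$, the increment vector $(H_{t_1}-H_{t_0},\ldots,H_{t_n}-H_{t_{n-1}})$ has a Dirichlet law with parameters $c\,(t_1-t_0,\ldots,t_n-t_{n-1})$. Since the uniform base measure is symmetric about $1/2$, reflecting the partition via $t_k \mapsto 1-t_{n-k}$ leaves the Dirichlet parameters invariant; this reversal of increments is exactly what verifies $\{1 - H_{(1-t)-}\}_{t \in [0,1]} \stackrel{d}{=} \{H_t\}_{t \in [0,1]}$, and Lemma \ref{lemma_rs} then delivers radial symmetry (about $\mu = 1/2$) of the induced random vector, which is equivalent to radial symmetry of $\hat{C}_c$.

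For the converse, I would start with $d = 2$. Combining $\hat{C} = C$ with the elementary identity $C(u,v) = u + v - 1 + \hat{C}(1-u,1-v)$ gives, on $\{u \leq v\}$,
\[
 u\,g_2(v) \;=\; u + v - 1 + (1-v)\,g_2(1-u).
\]
Differentiating in $u$ with $v$ fixed yields $g_2'(1-u) = (1 - g_2(v))/(1-v)$. Since the left-hand side is independent of $v$ and the right-hand side is independent of $u$, both are constant, so $g_2$ is affine; imposing $g_2(1) = 1$ and the non-negativity constraints inherent to $\hat{C}$ being a copula pins $g_2(u) = (cu+1)/(c+1)$ for a unique $c \in [0,\infty]$. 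For general $d$, the key observation is that fixing $d - k$ of the arguments at $1$ produces a $k$-dimensional marginal copula that is again of the form (\ref{exshock_copula}) with the same functions $g_2,\ldots,g_k$ (using $g_j(1)=1$), and which inherits radial symmetry; iterating the bivariate argument on these marginals forces each $g_k$ to be affine, say $g_k(u) = 1 - \alpha_k(1-u)$.

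The remaining step is to show that the $d-1$ a priori independent coefficients $\alpha_k$ are rigidly coupled by $\alpha_k(k-1)/(1-\alpha_k) = c$ for a single $c \in [0,\infty]$, which is exactly what identifies $\hat{C}$ as (\ref{Dirichlet_copula}). I would extract these couplings by substituting the affine expressions into the full $d$-dimensional radial-symmetry identity
\[
 \hat{C}(\bm{u}) \;=\; \sum_{S \subseteq \{1,\ldots,d\}} (-1)^{|S|}\,\hat{C}(\bm{w}^S), \qquad w^S_i = 1-u_i \text{ if } i \in S,\; w^S_i = 1 \text{ else,}
\]
evaluated along the diagonal $u_1 = \cdots = u_d = u$; this yields a polynomial identity in $u$ whose coefficients furnish recursive constraints on $\alpha_2,\ldots,\alpha_d$ that admit $\alpha_k = c/(c+k-1)$ as their unique solution.

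The main obstacle I anticipate is precisely this last step. Once the $g_k$ are already affine, the interior density of $\hat{C}$ on every chamber $\{u_{\sigma(1)} < \cdots < u_{\sigma(d)}\}$ equals the constant $\prod_{k=2}^d \alpha_k$, which is automatically invariant under $\bm{u} \mapsto \bm{1} - \bm{u}$; hence the absolutely continuous part of the law yields no information whatsoever about the coupling between the $\alpha_k$. The required constraints must therefore be extracted from the singular (lower-dimensional) components of $\hat{C}$ supported on the coincidence hyperplanes $\{u_i = u_j\}$, and tracking this mass carefully via the inclusion-exclusion identity above — equivalently, verifying radial symmetry at the non-generic points where order statistics coincide — is the technical heart of the argument.
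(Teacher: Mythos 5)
Your proposal follows the same overall skeleton as the paper's proof (which defers to \cite[Theorem 3.5]{mai16}): sufficiency exactly as in the paper, via the Dirichlet prior and Lemma \ref{lemma_rs}, and necessity via the inclusion--exclusion identity expressing radial symmetry plus differentiation. Where you genuinely differ is in the endgame of the necessity direction, and there your version is the more careful one: the paper's sketch claims that the $d$-fold differentiation shows the $g_k$ are linear, ``which implies the claim'', but linearity alone does \emph{not} force the Dirichlet coupling. Indeed, for $d=3$ and any $\alpha\in(0,1)$, the function $u_{[1]}\,g(u_{[2]})\,g(u_{[3]})$ with $g(u)=1-\alpha+\alpha u$ is the survival copula of a legitimate exchangeable exogenous shock model (three individual shocks, defective with mass $1-\alpha$ at infinity, plus one global shock, no pairwise shocks), has linear $g_2=g_3$, yet is neither of the form (\ref{Dirichlet_copula}) nor radially symmetric -- its pairwise-tie mass lives only on $\{u_i=u_j<u_k\}$, which reflection maps to a null set. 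So your diagnosis is exactly right: the chamber density is the constant $\prod_k\alpha_k$ and carries no coupling information, and the constraints must be read off the undifferentiated identity, i.e.\ the singular part. Your diagonal evaluation does deliver them: for $d=3$ it yields $\alpha_2\alpha_3=2\alpha_3-\alpha_2$, i.e.\ $\alpha_3=\alpha_2/(2-\alpha_2)$, which is precisely $\alpha_k=c/(c+k-1)$; for general $d$ the coefficient of $u$ in the $k$-variate diagonal identity is affine in $\alpha_k$ with non-vanishing slope away from the comonotone case, so an induction over $k$, combined with the fact that the Dirichlet coupling is a solution, gives the asserted uniqueness -- this is the ``lengthy but tedious'' part you still owe. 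Two small repairs: ``iterating the bivariate argument'' does not literally reach $g_k$ for $k\geq 3$ (bivariate marginals only see $g_2$); affinity of $g_k$ should instead come from the mixed $k$-th derivative of the $k$-variate identity on a chamber, which gives $\prod_{j=2}^{k}g_j'(u_j)=\prod_{j=2}^{k}g_j'(1-u_{k-j+1})$ and a separation-of-variables argument, with the degenerate case of a vanishing factor treated separately. With these points filled in, your argument is complete at the level of detail the paper itself provides, and it makes explicit a coupling step that the paper's sketch glosses over.
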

\begin{proof}
This is \cite[Theorem 3.5]{mai16}. In order to prove necessity, the principle of inclusion and exclusion can be used to express the survival copula of $\hat{C}$ as an alternating sum of lower-dimensional margins of $\hat{C}$. By radial symmetry, this expression equals $\hat{C}$, and on both sides of the equation one may now take the derivatives with respect to all $d$ arguments. A lengthy but tedious computation then shows that the $g_k$ must all be linear, which implies the claim. Sufficiency is proved using the Dirichlet prior construction. The defining properties of the Dirichlet prior imply that the assumptions of Lemma \ref{lemma_rs} are satisfied, which implies the claim.
\end{proof}

\subsection{The Sato-frailty model and self-decomposability}
A real-valued random variable $X$ is called \emph{self-decomposable} if for arbitrary $c \in (0,1)$ there exists an independent random variable $Y$ such that $X \stackrel{d}{=}c\,X+Y$. It can be shown that a self-decomposable $X$ is infinitely divisible, so self-decomposable laws are special cases of infinitely divisible laws. In particular, if $X$ takes values in $(0,\infty)$ and is infinitely divisible with Laplace exponent given by the Bernstein function $\Psi$, then $X$ is self-decomposable if and only if the function $x \mapsto x\,\Psi^{(1)}(x)$ is again a Bernstein function, see \cite[Theorem 2.6, p.\ 227]{steutel03}. Now let $\Psi$ be the Bernstein function associated with a self-decomposable law on $(0,\infty)$, and consider a family of Bernstein functions defined by $\Psi_t(x):=\Psi(x\,t)$, $t \geq 0$. One can show that there exists an additive subordinator $Z=\{Z_t\}_{t \geq 0}$ which is uniquely determined in law by $\{\Psi_t\}_{t \geq 0}$ via $\Psi_t(x)=-\log(\IE[\exp(-x\,Z_t)])$, $x,t \geq 0$, called \emph{Sato subordinator}. If we use this process in (\ref{canonical_construction_2}), the conditionally iid random vector $\bm{X}$ obtained by this construction has survival function given by
\begin{gather}
\IP(\bm{X}>\bm{x}) = \exp\Big\{{-\sum_{k=1}^{d}\Psi\big((d-k+1)\,x_{[k]}\big)-\Psi\big((d-k)\,x_{[k]}\big)}\Big\},\quad \bm{x} \in [0,\infty)^d.
\label{SD_exshock}
\end{gather}

The following lemma characterizes self-decomposability analytically in terms of multivariate probability laws given by (\ref{SD_exshock}). 

\begin{lemma}[Characterization of self-decomposable Bernstein functions]\label{lemma_BFchar}
Let $\Psi:[0,\infty) \rightarrow [0,\infty)$ be some function.
The $d$-variate function (\ref{SD_exshock}) defines a proper survival function on $[0,\infty)^d$ for all $d \geq 2$ if and only if $\Psi$ equals the Bernstein function of a self-decomposable probability law on $(0,\infty)$. 
\end{lemma}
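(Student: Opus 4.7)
The plan is to prove both directions via Theorem \ref{thm_exShock}, which identifies conditionally iid exogenous shock models with non-decreasing additive subordinators, combined with the analytic characterization of self-decomposable Bernstein functions from \cite[Theorem 2.6, p.\ 227]{steutel03}. For sufficiency, suppose $\Psi$ is the Bernstein function of a self-decomposable law. By the cited characterization, $y\mapsto y\,\Psi^{(1)}(y)$ is Bernstein, which in turn implies that $y\mapsto \Psi(t\,y)-\Psi(s\,y)$ is Bernstein for every $0\le s\le t$. Hence the family $\Psi_t(y):=\Psi(t\,y)$ is the family of Laplace exponents of a non-decreasing additive (Sato) subordinator $Z$. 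Feeding $Z$ into the canonical construction (\ref{canonical_construction_2}), using the resummation
\begin{gather*}
\sum_{k=1}^d Z_{x_k}=\sum_{j=1}^d(d-j+1)\,(Z_{x_{[j]}}-Z_{x_{[j-1]}}),\qquad x_{[0]}:=0,
\end{gather*}
and exploiting the independence of the increments of $Z$, I would compute
\begin{gather*}
\IP(\bm{X}>\bm{x})=\prod_{j=1}^d e^{-\big[\Psi((d-j+1)\,x_{[j]})-\Psi((d-j+1)\,x_{[j-1]})\big]},
\end{gather*}
which after a routine telescoping rearrangement coincides with (\ref{SD_exshock}).

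For necessity, assume that (\ref{SD_exshock}) defines a genuine survival function $\bar{F}^{(d)}$ for every $d\ge 2$, and denote the associated random vector by $\bm{X}^{(d)}$. Evaluating at $\bm{x}=(x_1,\ldots,x_{d-1},0)$ and simplifying the resulting telescoping sum shows that the $(d-1)$-margin of $\bm{X}^{(d)}$ has the same survival function as $\bm{X}^{(d-1)}$; together with the exchangeability that is manifest from the symmetric dependence on order statistics, this yields Kolmogorov-consistency, so the family extends to an infinite exchangeable sequence $\{X_k\}_{k\in\IN}$. De Finetti's Theorem \ref{thm_definetti} makes this sequence conditionally iid, and since the structural form (\ref{SD_exshock}) matches the exogenous-shock-model survival copula (\ref{exshock_copula}), Theorem \ref{thm_exShock} identifies the latent factor as $H=1-e^{-Z}$ for a non-decreasing additive subordinator $Z$ with some family of Laplace exponents $\{\Psi_t\}_{t\ge 0}$. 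Plugging into (\ref{SD_exshock}) a vector with $m$ components equal to $x$ and the remaining $d-m$ equal to $0$ yields, after telescoping, $\IP(\min_{k\in I}X_k>x)=e^{-\Psi(m\,x)}$ for every $|I|=m$, which via the canonical construction reads $e^{-\Psi_x(m)}=\IE[e^{-m\,Z_x}]=e^{-\Psi(m\,x)}$. Hence $\Psi_t(m)=\Psi(t\,m)$ for all $t\ge 0$ and $m\in\IN$, and by continuity of Bernstein functions this extends to $\Psi_t(y)=\Psi(t\,y)$ for all $y,t\ge 0$; in particular $\Psi=\Psi_1$ is itself a Bernstein function.

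The main obstacle, and the final step, is to convert the parametric identity $\Psi_t(y)=\Psi(t\,y)$ into self-decomposability of $\Psi$. The additivity of $Z$ demands that $\Psi_{t+h}-\Psi_t$ be a Bernstein function for every $t\ge 0$ and $h>0$, i.e.\ that $y\mapsto \Psi((t+h)\,y)-\Psi(t\,y)$ be Bernstein. Since the cone of Bernstein functions is closed under pointwise limits, dividing by $h$ and letting $h\searrow 0$ yields that $y\mapsto y\,\Psi^{(1)}(t\,y)$ is Bernstein for each $t>0$; specialising to $t=1$ shows that $y\mapsto y\,\Psi^{(1)}(y)$ is Bernstein, which by \cite[Theorem 2.6, p.\ 227]{steutel03} is precisely the self-decomposability of the probability law whose Laplace exponent is $\Psi$, completing the argument.
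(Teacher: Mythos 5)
Your sufficiency argument is essentially the paper's: it is the easy ``additive $\Rightarrow$ exogenous shock'' half of Theorem \ref{thm_exShock} specialised to the Sato family $\Psi_t=\Psi(t\,\cdot)$, and your telescoping computation is exactly how (\ref{SD_exshock}) is derived in the text preceding the lemma. For necessity you take a genuinely different route: the paper simply cites the purely analytical computations of \cite{mai17}, whereas you argue probabilistically — Kolmogorov consistency of the family (\ref{SD_exshock}) (your check at $x_d=0$ is correct, modulo the tacit assumption that there is no atom at zero spoiling the identification of the margin), de Finetti, and then the hard inclusion of Theorem \ref{thm_exShock} (due to \cite{sloot20}) to identify the latent factor as $1-e^{-Z}$ with $Z$ additive. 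This buys a conceptually transparent proof that stays inside the survey's framework, at the price of invoking the later and harder de Finetti-type result, while \cite{mai17} is self-contained (and predates \cite{sloot20}). Two harmless normalisations you gloss over, as does the lemma itself: the evaluation with $m$ coordinates equal to $x$ really gives $\Psi_x(m)=\Psi(m\,x)-\Psi(0)$, so strictly one characterises $\Psi-\Psi(0)$; and passing from the survival function to a survival copula of the form (\ref{exshock_copula}) presupposes enough regularity of $e^{-\Psi}$.

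The one step that does not work as written is the passage from $\Psi_t(m)=\Psi(t\,m)$ for $m\in\IN$ to $\Psi_t(y)=\Psi(t\,y)$ for all $y\geq 0$ ``by continuity of Bernstein functions''. Continuity of $\Psi_t$ says nothing about its values between the integers, and at this stage you do not know that $y\mapsto\Psi(t\,y)$ is a Bernstein function — that is essentially what you are trying to prove — so you also cannot directly invoke the fact (recorded before Theorem \ref{thm_ciidlom}) that a Laplace exponent is determined by its values on $\IN$. The repair is standard: compare the two genuine Bernstein functions $y\mapsto\Psi_t(m\,y)$ and $y\mapsto\Psi_{t\,m}(y)$; by your identity they agree at all nonnegative integers, i.e.\ the $[0,1]$-valued random variables $e^{-m\,Z_t}$ and $e^{-Z_{t\,m}}$ have the same moments, hence the same law by Hausdorff moment determinacy, so $\Psi_t(m\,y)=\Psi_{t\,m}(y)$ for \emph{all} $y\geq 0$. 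Taking $m=q$ and $t/q$ in place of $t$ yields $\Psi_t(p/q)=\Psi(t\,p/q)$ for all rationals, and the right-continuity of $s\mapsto\Psi(s)=\Psi_s(1)=-\log\IE[e^{-Z_s}]$ (from the c\`adl\`ag paths of $Z$) together with continuity of $\Psi_t$ on $(0,\infty)$ extends the identity to all $y$. With that step filled in, your closing argument — Bernstein property of the increments $\Psi_{t+h}-\Psi_t$, pointwise limits of difference quotients giving that $y\mapsto y\,\Psi^{(1)}(y)$ is Bernstein, and the characterisation in \cite{steutel03} — is sound.
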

\begin{proof}
Sufficiency is an instance of the general Theorem \ref{thm_exShock}, as demonstrated above. Necessity, i.e.\ that self-decomposability can actually be characterized in terms of the multivariate survival functions (\ref{SD_exshock}), is shown in \cite{mai17} and relies on some purely analytical, technical computations.
\end{proof}

\begin{example}[A one-parametric, multivariate Pareto distribution]
Let $\Psi(x)=\alpha\,\log(1+x)$ be the Bernstein function associated with a Gamma distribution\footnote{This is precisely the Gamma distribution with density (\ref{gamma_density}) for $\alpha=\alpha_k$.} with parameter $\alpha>0$. The Gamma distribution is self-decomposable and the survival function (\ref{SD_exshock}) takes the explicit, one-parametric form
\begin{gather*}
\IP(\bm{X}>\bm{x}) = \Big(\prod_{k=1}^{d} \frac{(d-k)\,x_{[k]}+1}{(d-k+1)\,x_{[k]}+1}\Big)^{\alpha}.
\end{gather*}
The one-dimensional marginal survival functions are given by $\bar{F}_1(x)=(1+x)^{-\alpha}$. Notice that this equals the survival function of $Y-1$, when $Y$ has a Pareto distribution with scale parameter (aka left-end point of support) equal to one and tail index $\alpha$. Thus, the random vector $\bm{X}+\bm{1}:=(X_1+1,\ldots,X_d+1)$ might be viewed as a multivariate extension of the Pareto distribution with scale parameter equal to one and tail index $\alpha$.
\end{example}

\newpage

\section{Related open problems}\label{sec:open}
\subsection{Extendibility-problem for further families}\label{open_iidunif}
The present article surveys solutions to Problems \ref{motivatingproblem} and \ref{motivatingproblemrefined} for several families $\mathfrak{M}$ of interest. One goal of the survey is to encourage others to solve the problem also for other families. We provide examples that we find compelling:
\begin{itemize}
\item[(i)] The family of min-stable laws in Section \ref{sec:evd} can be generalized to min-infinitely divisible laws. Generalizing (\ref{analytical_minstable}), a multivariate survival function $\bar{F}$ is called min-infinitely divisible if for each $t>0$ there is a survival function $\bar{F}_t$ such that $\bar{F}(\bm{x})^t=\bar{F}_t(\bm{x})$. Like min-stability is analogous to max-stability, the concept of min-infinite divisibility is equivalent to the concept of max-infinite divisibility, on which \cite{resnick87} provides a textbook treatment. It is pretty obvious that non-decreasing infinitely divisible processes occupy a commanding role with regards to the conditionally iid subfamily, but to work out a convenient analytical treatment of these in relation with the associated min-infinitely divisible laws appears to be a promising direction for further research. Notice that the family of reciprocal Archimedean copulas, introduced in \cite{genest18}, is one particular special case of max-infinitely divisible distribution functions, and in this special case the conditionally iid subfamily is determined similarly as in the case of Archimedean copulas, see \cite[Section 7]{genest18}. This might serve as a good motivating example for the aforementioned generalization.
\item[(ii)] Theorem \ref{thm_gnedin} studies $d$-variate densities of the form $g_d(x_{[d]})$, and \cite{gnedin95} also considers a generalization to densities of the form $g(x_{[1]},x_{[d]})$, depending on $x_{[1]}$ and $x_{[d]}$. From a purely algebraic viewpoint it is tempting to investigate whether exchangeable densities of the structural form $\prod_{k=1}^{d}g_k(x_{[k]})$ allow for a nice theory as well. When are these conditionally iid? This generalization of the $\ell_{\infty}$-norm symmetric case is motivated by a relation to non-homogeneous pure birth processes, as already explained in Remark \ref{rmk_purebirth}. Such processes are of interest in reliability theory, as explained in \cite{shaked02}.
\item[(iii)] On page \pageref{brigoref} it was mentioned that the Marshall-Olkin distribution is characterized by the property that for all subsets of components the respective ``survival indicator process'' is a continuous-time Markov chain. This property may naturally be weakened to the situation when only the survival indicator process $Z_t:=(1_{\{X_1>t\}},\ldots,1_{\{X_d>t\}})$ of all components is a continuous-time Markov chain. On the level of multivariate distributions, one generalizes the Marshall-Olkin distribution to a more general family of multivariate laws that has been shown to be interesting in mathematical finance in \cite{herbertsson08}. Furthermore, it is a subfamily of the even larger family of so-called multivariate phase-type distributions, see \cite{assaf84}. Which members of theses families of distributions are conditionally iid? Presumably, this research direction requires to generalize the L\'evy subordinator in the Marshall--Olkin case to more general non-decreasing Markov processes.
\end{itemize}
\subsection{Testing for conditional independence}
If a specific $d$-variate law in some family $\mathfrak{M}$ is given, do we have a practically useful, analytical criterion to decide whether or not this law is in $\mathfrak{M}_{\ast}$, resp.\ $\mathfrak{M}_{\ast\ast}$, or not? According to Theorem \ref{thm_probgen}, in general this requires to check whether a supremum over bounded measurable functions is bounded from above, which in practice is rather inconvenient - at least on first glimpse. For certain families, however, there is hope to find more useful criteria. For instance, for Marshall-Olkin distributions the link to the truncated moment problem in Remark \ref{remk_truncMOM} is helpful in this regard, like it is for binary sequences. For Archimedean copulas (resp.\ $\ell_1$-norm symmetric survival functions) this boils down to checking whether a $d$-monotone function is actually completely monotone, i.e.\ a Laplace transform. However, it is an open problem for the family of extreme-value copulas. Of course, Theorem \ref{thm_minstable} tells us which stable tail dependence functions correspond to conditionally iid laws. But given some specific stable tail dependence function, how can we tell effectively whether or not this given function has the desired form? Even for dimension $d=2$, in which case the problem is presumably easier due to the fact that the $2$-dimensional unit simplex is one-dimensional, this problem is non-trivial and open. Given we find such effective analytical criterion for some family $\mathfrak{M}$, is it even possible to build a useful statistical test based on it, i.e.\ can we test the hypothesis that the law is conditionally iid? 
\subsection{Combination of one-factor models to multi-factor models}\label{open_multifactor}
This is probably the most obvious application of the presented concepts. The idea works as follows. According to our notation, the dependence-inducing latent factor in a conditionally iid model is $H$. Depending on the stochastic properties of $H \sim \gamma \in M_+^1(\mathfrak{H})$, it may be possible to construct $H$ from a pair $(H^{(1)},H^{(2)}) \sim \gamma_1 \otimes \gamma_2 \in M_+^1(\mathfrak{H}) \times M_+^1(\mathfrak{H})$ of two independent processes of the same structural form, say $H=f(H^{(1)},H^{(2)})$. For example, if $H^{(1)}$ and $H^{(2)}$ are two strong IDT processes, see Section \ref{sec:evd}, then so is their sum $H=H^{(1)}+H^{(2)}$. In this situation, we may define \emph{dependent} processes $H^{(1,1)},\ldots,H^{(1,J)}$ from $J+1$ \emph{independent} processes $H^{(0)},\ldots,H^{(J)}$ as $H^{(1,j)}=f(H^{(0)},H^{(j)})$. The conditionally iid vectors $\bm{X}^{(1)},\ldots,\bm{X}^{(J)}$ defined via (\ref{canonical_construction}) from $H^{(1,1)},\ldots,H^{(1,J)}$ are then dependent, so that the combined vector $\bm{X}=(\bm{X}^{(1)},\ldots,\bm{X}^{(J)})$ has a hierarchical dependence structure. Such structures break out of the - sometimes undesired and limited - exchangeable cosmos and have the appealing property that the lowest-level groups are conditionally iid, so the whole structure can be sized up, i.e.\ is dimension-free to some degree. Of particular interest is the situation when the random vector $(X^{(1)}_1,\ldots,X^{(J)}_1)$ composed of one component from each of the $J$ different groups is conditionally iid and its latent factor process equals $H^{(0)}$ in distribution. In this particular situation, an understanding of the whole dependence structure of the hierarchical model $\bm{X}$ is retrieved from an understanding of the conditionally iid sub-models based on the $H^{(j)}$. In other words, the conditionally iid model can be \emph{nested} to construct highly tractable, non-exchangeable, multi-factor dependence models from simple building blocks. For instance, hierarchical elliptical laws, Archimedean copulas\footnote{See also the many references in Remark \ref{rmk_AC}.}, and min-stable laws can be constructed based on the presented one-factor building blocks, see \cite{maischerer12} for an overview. For these and other families, the design, estimation, and efficient simulation of such hierarchical structures is an active area of research or even an unsolved problem.
\subsection{Parameter estimation with uncertainty} \label{open_Bayes}
The classical statistical parameter estimation problem is to estimate the (true) parameter $m$ of a one-parametric distribution function $F_m$ from iid observations $X_1,\ldots,X_d \sim F_{m}$. A parameter estimate is then a  function $\hat{m}=\hat{m}(X_1,\ldots,X_d)$ of the observations into the set of admissible parameters. This classical problem relies on the hypothesis that there is a ``true'' parameter $m$, from which the observations are drawn. But what if we are uncertain whether or not the observations are actually drawn from some $F_{m}$? The Dirichlet prior has been introduced in \cite{ferguson73,ferguson74} with the motivation to model uncertainty about the hypothesis that observations are drawn from some $F_{m}$. Instead, it is assumed that they are drawn from $DP(c,F_{m})$ with an uncertainty parameter $c>0$. On a high level, this amounts to observing one sample $\bm{X}=(X_1,\ldots,X_d)$, with large $d$, from a parametric conditionally iid model. Optimal estimates for $m$ based on the observations can then be derived due to the convenient Dirichlet prior setting, see \cite{ferguson73,ferguson74} for details. But this question can clearly also be posed for other conditionally iid models. Let us provide a second motivation that appears to be natural: let $X_1,\ldots,X_d$ be observed time points of company bankruptcy filings within the last $10$ years. An iid assumption for $X_1,\ldots,X_d$ is well known to be inappropriate. Instead, a popular model for such time points is a Marshall-Olkin distribution, see \cite{giesecke03}. If we assume in addition - now for mathematical convenience - that $\bm{X}=(X_1,\ldots,X_d)$ is conditionally iid, we know from Theorem \ref{thm_ciidlom}($\mathcal{E}$) and Lemma \ref{lemma_condGC} that the empirical distribution function of $X_1,\ldots,X_d$ is approximately equal to $1-\exp(-Z)$ for a L\'evy subordinator $Z$. Depending on a specific parametric model for $Z$, it is well possible that we can estimate the parameters based on the observed empirical distribution function. For example, if $Z$ is a compound Poisson process with constant jump size $m$, then huge (small) jumps in the empirical distribution function apparently indicate a large (small) value of $m$. Such parameter estimation problems based on one (large) sample $\bm{X}=(X_1,\ldots,X_d)$ from a conditionally iid model appear to be very model-specific and thus possibly interesting, and the two motivating examples above indicate that one might find natural motivations for these.
\subsection{Quantification of diversity of possible extensions}\label{subsec_effectext}
All of the presented theorems solve Problem \ref{motivatingproblemrefined}, but only in some cases\footnote{To wit, Example \ref{ex_Normal}, Schoenberg's Theorem \ref{schoenberg_thm} and Theorem \ref{thm_gnedin}.} the solution set $\mathfrak{M}_{\ast\ast}$ is shown to coincide with the in general larger solution set $\mathfrak{M}_{\ast}$ in Problem \ref{motivatingproblem}. Can one show that $\mathfrak{M}_{\ast}=\mathfrak{M}_{\ast\ast}$ in the other presented solutions of Problem \ref{motivatingproblemrefined}? To provide one concrete example, from Theorem \ref{thm_ciidlom}($\mathcal{G}$) we know that $(b_0,b_1,b_2) \in \mathcal{M}_2$ determines a three-dimensional, exchangeable wide-sense geometric law. However, this exchangeable probability distribution is only in $\mathfrak{M}_{\ast\ast}$ if there exist $b_3,b_4,\ldots$ such that $\{b_k\}_{k \in \IN_0} \in \mathcal{M}_{\infty}$. Could it be that the last extension property does fail, but the three-dimensional, exchangeable wide-sense geometric law associated with $(b_0,b_1,b_2)$ is still conditionally iid? If so, then necessarily there is some $n>3$ and an infinite exchangeable sequence $\{X_k\}_{k \in \IN}$ such that $(X_1,X_2,X_3)$ has the given wide-sense geometric law but $(X_1,\ldots,X_n)$ is not wide-sense geometric.
\par
A related question concerns only elements in $\mathfrak{M}_{\ast\ast}$. There might be two infinite exchangeable sequences $\{X^{(1)}_k\}_{k \in \IN}$ and $\{X^{(2)}_k\}_{k \in \IN}$ with  $\{X^{(1)}_k\}_{k \in \IN} \stackrel{d}{\neq}\{X^{(2)}_k\}_{k \in \IN}$ but $(X^{(1)}_1,\ldots,X^{(1)}_d) \stackrel{d}{=}(X^{(2)}_1,\ldots,X^{(2)}_d)$ for some $d \in \IN$. To provide an example, related to Theorems \ref{thm_definetti01} and \ref{thm_ciidlom}, the vector $(1,b_1)$ with $b_1 \in [0,1]$ can always be extended to a sequence $\{b_k\}_{k \in \IN}$ that is completely monotone, for example set $b_k=b_1^k$. In case of Theorem \ref{thm_ciidlom}($\mathcal{G}$), all the different possible extensions imply different exchangeable sequences $\{X_k\}_{k \in \IN}$ such that $2$-margins follow the associated wide-sense geometric law with parameters $(1,b_1)$. But all these extensions in Theorem \ref{thm_ciidlom}($\mathcal{G}$) have in common that arbitrary $d$-margins are always wide-sense geometric. Can one quantify \emph{how} different such extensions are allowed to be? A similar question is: Is the ``$\subset$'' in (\ref{ext_ARCHIMAX}) actually a ``$=$''?  Notice that the proof ideas in \cite{diaconis87,rachev91}, who study such issues in the case of some static laws, might help to approach such questions. 
\subsection{Characterization of stochastic objects via multivariate probability laws}
As a general rule, for an infinite exchangeable sequence $\{X_k\}_{k \in \IN}$ defined via (\ref{canonical_construction}) the probability law of the random distribution function $H$ is uniquely determined by its mixed moments
\begin{gather*}
\IE[H_{t_1}\,\cdots\,H_{t_d}]=\IP(X_1 \leq t_1,\ldots,X_d \leq t_d),\quad d \in \IN,\,t_1,\ldots,t_d \in \IR.
\end{gather*} 
This often implies interesting analytical characterizations of the stochastic object $H$ in terms of the multivariate distribution functions $\bm{t} \mapsto \IP(X_1 \leq t_1,\ldots,X_d \leq t_d)$. In particular, if $H$ is of the form $H=1-\exp(-Z)$ like in (\ref{canonical_construction_2}), then the mixed moments above become
\begin{gather*}
\IE\Big[e^{-\sum_{k=1}^{d}Z_{t_k}}\Big]=\IP(X_1 > t_1,\ldots,X_d > t_d),\quad d \in \IN,\,t_1,\ldots,t_d \geq 0,
\end{gather*} 
that is the survival functions $\bm{t} \mapsto \IP(X_1 > t_1,\ldots,X_d > t_d)$ stand in one-to-one relation with the Laplace transforms of finite-dimensional margins of the non-decreasing process $Z$. This general relationship explains the close connection between conditionally iid probability laws and moment problems/ Laplace transforms encountered several times in this survey. For instance, Theorem \ref{thm_AC} shows that $\varphi$ is a Laplace transform if and only if $\bm{x} \mapsto \varphi(\norm{\bm{x}}_1)$ is a survival function for all $d \geq 1$, or Theorem \ref{thm_ciidlom} characterizes L\'evy subordinators in terms of multivariate survival functions, or Lemma \ref{lemma_BFchar} characterizes self-decomposable Bernstein functions via multivariate survival functions. Can further characterizations be found? Is there a compelling application for such characterizations in terms of multivariate survival functions? 

\newpage

\end{document}